%% file: RSK-isomorphism.tex
\documentclass[12pt]{article}

\usepackage[utf8]{inputenc}
\usepackage[T1]{fontenc}

\usepackage{subfiles}

\usepackage{amssymb,amsmath,amsthm,commath,xspace,mathdots}
\usepackage{enumitem}
	
\usepackage{aliascnt}  

\usepackage[colorlinks,citecolor=blue,urlcolor=blue]{hyperref}

\usepackage[capitalize,noabbrev]{cleveref}

\usepackage{times,bbm}

\usepackage[backgroundcolor=yellow]{todonotes}

\usepackage[labelfont={normalsize}]{caption,subfig}

\usepackage{tikz}
\usepackage{tikz-cd}
\usetikzlibrary{calc,colorbrewer,decorations.markings}
\usetikzlibrary{backgrounds,patterns}
\usetikzlibrary{patterns.meta}
\usetikzlibrary{shapes.arrows}

\usepackage{pgfplots}
\pgfplotsset{compat=1.18}

\makeatletter
\@ifclasswith{article}{externalize}%
{
	\usetikzlibrary{external} 
	\tikzexternalize[
	optimize=false,
	prefix=tikz/,
	mode=list and make]
	\makeatletter
	\renewcommand{\todo}[2][]{\tikzexternaldisable\@todo[#1]{#2}\tikzexternalenable}
	\makeatother
}%
{
	\newcommand{\tikzexternaldisable}{}
	\newcommand{\tikzexternalenable}{}
}
\makeatother

\usepackage{xcolor} 

\newcommand{\R}{\mathbb{R}}
\newcommand{\N}{\mathbb{N}}

\DeclareMathOperator{\RSK}{RSK}
\newcommand{\RSKcos}{\operatorname{RSKcos}}
\newcommand{\RSKsin}{\operatorname{RSKsin}}
\newcommand{\scq}{q}

\newcommand{\tree}{\tau}
\newcommand{\origin}{\mathcal{O}}

\newcommand{\jdtpath}{\mathbf{p}}
\newcommand{\jdt}{J}
\newcommand{\jdtforest}{\Phi}

\newcommand{\asymdirect}{\alpha_\textnormal{asym}}

\newcommand{\cusp}{\operatorname{cusp}}

\newcommand{\gridcommand}{}

\theoremstyle{definition}
\newtheorem{definition}{Definition}[section]

\theoremstyle{plain}
\newaliascnt{theorem}{definition}
\newtheorem{theorem}[theorem]{Theorem}
\aliascntresetthe{theorem}

\newaliascnt{proposition}{definition}
\newtheorem{proposition}[proposition]{Proposition}
\aliascntresetthe{proposition}

\newaliascnt{corollary}{definition}
\newtheorem{corollary}[corollary]{Corollary}
\aliascntresetthe{corollary}

\newaliascnt{lemma}{definition}
\newtheorem{lemma}[lemma]{Lemma}
\aliascntresetthe{lemma}

\newaliascnt{assumption}{definition}
\newtheorem{assumption}[assumption]{Assumption}
\aliascntresetthe{assumption}

\newaliascnt{conjecture}{definition}

\aliascntresetthe{conjecture}

\newaliascnt{problem}{definition}

\aliascntresetthe{problem}

\theoremstyle{remark}
\newaliascnt{example}{definition}

\aliascntresetthe{example}

\newaliascnt{remark}{definition}
\newtheorem{remark}[remark]{Remark}
\aliascntresetthe{remark}

\newaliascnt{question}{definition}

\aliascntresetthe{question}

\crefname{theorem}{Theorem}{Theorems}
\Crefname{theorem}{Theorem}{Theorems}
\crefname{proposition}{Proposition}{Propositions}
\Crefname{proposition}{Proposition}{Propositions}
\crefname{corollary}{Corollary}{Corollaries}
\Crefname{corollary}{Corollary}{Corollaries}
\crefname{lemma}{Lemma}{Lemmas}
\Crefname{lemma}{Lemma}{Lemmas}
\crefname{assumption}{Assumption}{Assumptions}
\Crefname{assumption}{Assumption}{Assumptions}
\crefname{conjecture}{Conjecture}{Conjectures}
\Crefname{conjecture}{Conjecture}{Conjectures}
\crefname{example}{Example}{Examples}
\Crefname{example}{Example}{Examples}
\crefname{remark}{Remark}{Remarks}
\Crefname{remark}{Remark}{Remarks}
\crefname{question}{Question}{Questions}
\Crefname{question}{Question}{Questions}
\crefname{problem}{Problem}{Problems}
\Crefname{problem}{Problem}{Problems}

\usepackage[backend=bibtex, doi=true, isbn=false, url=false, style=alphabetic,
giveninits=true, maxalphanames=4, 
maxnames=5]{biblatex}
\AtEveryBibitem{\clearlist{language}}

\newcommand{\biblio}{\printbibliography}

\title{Jeu de taquin forests \\ and the inverse infinite RSK correspondence}

\author{Dan Romik and Piotr Śniady}
\date{}

\begin{document}

\maketitle

\begin{abstract}
	The \emph{Plancherel-random infinite Young tableau} arises
	from applying the infinite Robinson--Schensted--Knuth (RSK)
	correspondence to a sequence of i.i.d.~random variables
	distributed uniformly on the unit interval $[0,1]$. Building on the
	isomorphism of dynamical systems established in prior work, we
	provide an explicit geometric characterization of the
	inverse map. Our approach makes use of a previously unexplored structure, 
	the \emph{jeu de taquin forest} on the infinite tableau, where edges connect
	boxes according to local comparison rules. We prove that
	each tree in this forest almost surely extends toward
	infinity with a well-defined asymptotic direction, establishing a
	canonical bijection between trees and values in the i.i.d.~input 
	sequence. The ordering is recovered through tree lifetimes under iterated
	jeu de taquin transformations. 
\end{abstract}

\section{Introduction}

\label{sec:intro}

\subsection{The RSK and jeu de taquin maps on infinite tableaux}

We recall the main definitions and results from \cite{RomikSniady2015}.
An \emph{Infinite Young Tableau} (IYT) is an array
\[ T = (T_{x,y})_{x,y\ge0} \]
where the entries $T_{x,y}$ are increasing along rows and
columns and take positive integer values, such that $T$
contains each positive integer exactly once. We visualize
$T$ graphically as an infinite array of \emph{boxes}, where
each pair $(x, y) \in \N_0^2$ of indices is referred to as a
``box'' and is associated with (and treated as synonymous
with) the square $[x,x+1]\times [y,y+1]$ that is depicted as
containing the entry~$T_{x,y}$. See \cref{fig:iyt} for an
example.

\begin{figure}[t]
	\centering
	\subfile{figures/FIGURE-iyt.tex}
	\caption{An infinite Young tableau.}
	\label{fig:iyt}
\end{figure}

We denote by $\Omega$ the
space of infinite Young tableaux. This space (viewed as a
measurable space in the usual way by equipping it with the
minimal $\sigma$-algebra $\mathcal{F}$ that turns all
coordinate functions $T\mapsto T_{x,y}$ into measurable
functions) is equipped with a natural probability measure
$P$, known as \emph{(infinite) Plancherel measure}, that is
characterized by the property that for each (finite) Young
diagram $\lambda$ and standard Young tableau $Q =
(q_{x,y})_{(x,y) \in \lambda}$ of shape $\lambda$, we have
\[
P\Big( T_{x,y} = q_{x,y} \textrm{ for all }(x,y) \in \lambda \Big) =
\frac{f^\lambda}{n!},
\]
where $n=|\lambda|$ (the number of boxes in $\lambda$) and
$f^\lambda$ denotes the number of standard Young tableaux of
shape $\lambda$.

The Plancherel measure space $(\Omega, \mathcal{F}, P)$ has
a deep connection to two fundamental operations from
algebraic combinatorics and representation theory: the
\emph{Robinson--Schensted--Knuth (RSK) map}
\cite{Robinson1938,Schensted1961,Knuth1970}, and the \emph{jeu
	de taquin (JDT) map} of Schützenberger
\cite{Schuetzenberger1977}. An infinite version of the RSK
correspondence was first considered, in a more general setup,
by Kerov and Vershik \cite{KerovVershik1986} in their study
of the characters of the infinite symmetric group. In the
infinite tableau setting, the two operations are defined as
follows:

\begin{itemize}

\item The RSK map is a map $\RSK:[0,1]^\N \to \Omega$
defined as the projective limit of the RSK maps on finite
sequences of real numbers, in the following sense: let
$\mathbf{z}=(z_n)_{n=1}^\infty \in [0,1]^\N$. For each
$n\ge1$, let $(\lambda^{(n)},P_n,Q_n)$ be the triple
associated to the finite sequence $z_1,\ldots,z_n$ by the
usual RSK correspondence; this means that $\lambda^{(n)}$ is
a Young diagram with $n$ boxes (the ``RSK shape''), $P_n$ is
a semistandard Young tableau of shape $\lambda^{(n)}$ with the
entries $z_1,\ldots,z_n$ (the ``insertion tableau''), and
$Q_n$ is a standard Young tableau of shape $\lambda$ (the
``recording tableau''). Then
$\RSK(\mathbf{z})$ is defined as the
(necessarily unique) IYT $T=(T_{x,y})_{x,y\ge0}$ whose
restriction to the boxes of the Young diagram
$\lambda^{(n)}$ is equal to $Q_n$ for any $n\ge1$, if such
an IYT exists.

The value $\RSK(\mathbf{z})$ is well-defined for
\emph{almost every} (but not every) input $\mathbf{z}$, with
respect to the product of Lebesgue measures on $[0,1]^\N$.

\item The jeu de taquin map is a map $J:\Omega\to\Omega$
that acts on an IYT through the following sequence of
operations (see \cref{fig:jdt-forest}):
\begin{enumerate}

\item remove the box with entry $T_{0,0}=1$ at position $(0, 0)$. 

\item Perform Schützenberger promotion, consisting of a
sequence of sliding moves involving the chain of boxes with
positions
\begin{equation} \label{eq:jdt-path}
(x_0, y_0) \leftarrow (x_1,y_1) \leftarrow (x_2, y_2) \leftarrow \ldots
\end{equation}
defined by the recurrence
\[
(x_n,y_n) = \begin{cases}
(x_{n-1}+1,y_{n-1}) & \textrm{if }T_{x_{n-1}+1,y_{n-1}} < T_{x_{n-1},y_{n-1}+1}
\\
(x_{n-1},y_{n-1}+1) & \textrm{if }T_{x_{n-1}+1,y_{n-1}} > T_{x_{n-1},y_{n-1}+1}
\end{cases}
\qquad (n\ge 1),
\]
with the initial condition $(x_0,y_0)=(0,0)$; the
box $(x_n,y_n)$ with entry $T_{x_n,y_n}$ slides into
position $(x_{n-1},y_{n-1})$ for each $n\ge1$. (The sequence
of boxes $((x_n,y_n))_{n=0}^\infty$ is called the \emph{jeu
	de taquin path} of $T$; it is shown in blue in
\cref{fig:jdt-forest}.) In words, at each step the path moves to whichever of
the two neighbours $(x_{n-1}+1,y_{n-1})$ and $(x_{n-1},y_{n-1}+1)$ holds the
smaller entry.

\item Decrease all entries by $1$.

\end{enumerate}

\end{itemize}

Below, we denote by $\mathcal{B}$ the Borel $\sigma$-algebra
on $[0,1]^\N$, and by $\Lambda$ the product of Lebesgue
measures on $[0,1]^\N$. We summarize the main results of
\cite{RomikSniady2015}:
\begin{theorem}
\label{thm:romiksniady-main}

\begin{enumerate}

\item The Plancherel measure $P$ is the push-forward of the
infinite-dimensional Lebesgue measure $\Lambda$ on
$[0,1]^\N$ under the map $\RSK(\cdot)$.

\item The jeu de taquin map $J$ is a measure-preserving
transformation on the Plancherel measure space $(\Omega,
\mathcal{F}, P)$.

\item The jeu de taquin map is intertwined with the shift map $S$
on $[0,1]^\N$ (defined by $S(z_1,z_2,\ldots)=(z_2,z_3,\ldots)$)
by the map $\RSK(\cdot)$, in the sense that the
relation
\[
J \circ \RSK = \RSK\circ S
\]
holds $\Lambda$-almost surely on $[0,1]^\N$. 

\item The RSK map $\RSK(\cdot)$ is an isomorphism between
the two measure-preserving dynamical systems $([0,1]^\N,
\mathcal{B}, \Lambda, S)$ and $(\Omega, \mathcal{F}, P, J)$.

\item The inverse map $\RSK^{-1}(\cdot)$ is given explicitly by
\begin{equation} \label{eq:inverse-rsk-explicit}
\RSK^{-1}(T) = ( \zeta(T), \zeta\circ J(T), \zeta \circ J^2(T), \ldots)
\quad \textrm{for $P$-almost every }T\in \Omega,
\end{equation}
where
\begin{equation}
\label{eq:inverse-rsk-firstcoord}
\zeta(T) =
1 - F\left(\lim_{n\to\infty} \frac{x_n-y_n}{\sqrt{T_{x_n,y_n}}} \right),
\end{equation}
where $((x_n,y_n))_{n=0}^\infty$ denotes the jeu de taquin
path associated with $T$ as in \eqref{eq:jdt-path} and
$F(u)$ is given by
\begin{equation}
\label{eq:def-F-semicircle}
F(u) = \frac{1}{2\pi} \int_{-2}^u \sqrt{4-s^2} \, ds
\qquad (-2 \le u \le 2).
\end{equation}
(This statement includes the assertion that the limit in
\eqref{eq:inverse-rsk-firstcoord} exists and takes a value in $[-2,2]$
$P$-almost surely.)

\end{enumerate}
\end{theorem}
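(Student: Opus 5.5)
This theorem summarizes the results of \cite{RomikSniady2015}, so the plan follows the architecture of that work, in the order in which the parts depend on each other.

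\textbf{Part 1.} This is the classical fact that RSK sends i.i.d.\ continuous inputs to Plancherel-distributed recording tableaux. For fixed $n$, almost surely $z_1,\dots,z_n$ are distinct. Their relative order is then a uniformly random permutation of $n$. By the bijectivity of finite RSK, $(P_n,Q_n)$ is uniform over pairs of standard tableaux of a common shape. Hence $P(Q_n=Q)=f^\lambda/n!$ for every $Q$ of shape $\lambda$. The recording tableaux $Q_n$ are nested (each $Q_{n+1}$ extends $Q_n$), so their union is an IYT whenever every box is eventually filled. This last condition holds a.s., because the first row and first column of $\lambda^{(n)}$ grow like $2\sqrt n\to\infty$. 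So $\RSK$ is defined a.e., and its push-forward agrees with $P$ on cylinder sets, which proves part 1.

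\textbf{Part 3.} The key finite identity is a classical fact about Schützenberger promotion. If $Q$ is the recording tableau of $z_1,\dots,z_n$, then the recording tableau of $z_2,\dots,z_n$ is obtained from $Q$ by deleting the entry $1$, applying jeu de taquin, and decrementing all entries. I would prove this by the standard argument via Greene's theorem, or via growth diagrams (Fomin's local rules), which are symmetric under removing the first input letter. Letting $n\to\infty$ and using the consistency of both sides on each fixed finite box set gives $J\circ\RSK=\RSK\circ S$ a.s. \textbf{Part 2} then follows immediately from parts 1 and 3, since $S$ preserves $\Lambda$.

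\textbf{Part 5.} This is the heart of the theorem, and the main obstacle is to show that the jeu de taquin path has an asymptotic direction determined by $z_1$. I would study the position of the box that carries the value $z_1$ in the insertion tableau $P_n$. By the symmetry in part 3, the jeu de taquin path of $T=\RSK(\mathbf z)$ is exactly the trajectory of the box that was vacated at each stage. That trajectory coincides with the trajectory of the ``bumping route'' of $z_1$ as later letters are inserted. So $(x_n,y_n)$ tracks the position of $z_1$ in $P_{m}$ at the time $m\approx T_{x_n,y_n}$.

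By the Logan--Shepp/Vershik--Kerov limit shape, $P_m$ rescaled by $\sqrt m$ converges to the limit shape. Moreover, the entries of $P_m$ below a level $s$ form the insertion tableau of the subsequence of inputs that are $<s$. Combining this with the semicircle law for the distribution of contents, the position of the value $z_1$ has rescaled content $(x-y)/\sqrt m\to F^{-1}(1-z_1)$, where the ``$1-$'' accounts for orientation: small values sit in the first row. Making this a.s.\ rather than in probability requires concentration plus monotonicity, using a sandwich between nearby deterministic levels; this is where I expect the main difficulty. This gives $\zeta(T)=z_1$ a.s., with the limit in $[-2,2]$.

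Applying the same identity to $J^kT=\RSK(S^k\mathbf z)$ via part 3 gives \eqref{eq:inverse-rsk-explicit}. \textbf{Part 4} follows, since $\RSK$ then has a measurable a.e.\ left inverse that intertwines the dynamics and pushes $P$ back to $\Lambda$.
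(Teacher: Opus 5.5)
The paper gives no proof of this statement; it is imported from \cite{RomikSniady2015}, so your plan has to stand on its own. Parts 1--4 are fine as standard reductions. Part 3 comes from the finite promotion identity for recording tableaux together with consistency of the finite and infinite slides, part 2 from the push-forward computation, and part 4 once part 5 holds. There is one small slip in part 1: growth of the first row and first column does not force every box to be filled eventually (think of hooks). You need every row to grow, e.g.\ because the events $\{(x,y)\in\lambda^{(n)}\}$ increase in $n$ and have probability tending to $1$ by the limit shape theorem.

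The genuine gap is in part 5, at the step you call its heart: the claim that $(x_n,y_n)$ tracks the position of $z_1$ in $P_m$ is false. In row insertion a bumped entry moves to the next row and weakly to the \emph{left}. So $z_1$, which starts at $(0,0)$, stays in column $0$ of every $P_m$; it is the top of the first column of the subtableau of entries $\le z_1$. Its direction is therefore that of the $y$-axis whatever the value of $z_1$, and no concentration or sandwich argument can extract $F^{-1}(1-z_1)$ from it. Already for $\mathbf z=(0.5,0.7,0.3,\dots)$ the lazy jeu de taquin path is at $(1,0)$ at time $3$, while $z_1$ sits at $(0,1)$ in $P_3$. Relatedly, the semicircle law is not a law of contents of boxes of $P_m$; it is the rescaled limit of the Plancherel transition measure.

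What part 3 actually gives is that the lazy path at time $m$ is the unique box of $\operatorname{sh}P(z_1,\dots,z_m)\setminus\operatorname{sh}P(z_2,\dots,z_m)$. Since $P(w_m\cdots w_1)=P(w_1\cdots w_m)^t$ for distinct letters, this is the transpose of the box created when $z_1$ is inserted \emph{last} into $P(z_m,\dots,z_2)$. That is the object to analyze. This new box moves weakly southeast as the inserted value increases. Given the current shape, its law for a uniform inserted value is the Plancherel transition measure, whose rescaled content tends to the semicircle law; after transposition, this is how $F^{-1}(1-z_1)$ arises. Turning this averaged statement into the almost sure identity $\zeta(T)=z_1$ for the realized $z_1$ is still a substantial step. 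It includes the almost sure existence of the limit, i.e.\ the asymptotic straightness of the path. This is the main analytic content of \cite{RomikSniady2015}, and your sketch leaves it open.
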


\subsection{The jeu de taquin forest and key results} Note
that $F(\cdot)$ in \eqref{eq:def-F-semicircle} is the
cumulative distribution function of the semicircle
distribution on~$[-2,2]$. The fact that the limit in the
definition of $\zeta(T)$ exists almost surely for a
Plancherel-random IYT $T$ is equivalent to the statement
that the jeu de taquin path of $T$ is almost surely
asymptotically a straight line, with the formulas
\eqref{eq:inverse-rsk-firstcoord}--\eqref{eq:def-F-semicircle} 
being a convenient way to describe the distribution of the
random direction of this line. (See Theorem~1.1 of
\cite{RomikSniady2015} for an alternative description of the
law of this random variable.)

One surprising aspect of these results is that, while in the
finite setting the usual RSK correspondence is a
combinatorial bijection that requires both the insertion
tableau and recording tableau in order to invert the map and
recover the original sequence $z_1,\ldots,z_n$, in the
infinite setting the insertion tableau is no longer needed.
(Moreover, in the infinite setting it is not even meaningful
anymore to talk about an insertion tableau.) Furthermore,
remarkably, the recovery process for the infinite sequence
consists of iterating the JDT map $J$ and calculating the
asymptotic directions of the jeu de taquin paths arising
with each iteration. Each of these asymptotic directions
produces, via the linearizing map $u \mapsto 1-F(u)$, one
additional value in the input sequence $z_1,z_2,\ldots$.

In other words, if one ``looks'' at the tableau
$T=\RSK(\mathbf{z})$, one can ``see'' the initial value
$z_1$ of the input sequence in a relatively direct way—it is
given by $\zeta(T)$, as stated
in~\eqref{eq:inverse-rsk-explicit}. However, the inversion
formula \eqref{eq:inverse-rsk-explicit} suggests that it
ought to be difficult or impossible to ``see'' the next
value $z_2$, or any of the other values in the sequence
$\mathbf{z}$, without first calculating the tableau $J(T)$.

One of the goals of this paper is to show that this
intuition is false, in the following sense: the set of
numbers $z_1, z_2, \ldots$, with some partial information
about their ordering, \emph{can} in fact be seen directly by
looking at the tableau $T$, without applying the JDT
map~$J$. In particular, the values $z_2, z_3, \ldots$ can be
calculated from the asymptotic directions of certain tree
substructures embedded within $T$, in the same way that
$z_1$ is related to the asymptotic direction of the jeu de
taquin path.

\medskip

To make this idea precise, we introduce the notion of the
\emph{jeu de taquin forest}. Given an IYT
$T=(T_{x,y})_{x,y\ge0}$, we associate with each box $(x,y)$
of $T$ an oriented edge pointing either towards $(x+1,y)$ or
$(x,y+1)$, according to whether $T_{x+1,y} < T_{x,y+1}$ or
$T_{x+1,y} > T_{x,y+1}$. This is the same local rule that
defines the jeu de taquin path. This turns $\N_0^2$ into a
directed graph in which every box has exactly one outgoing
edge. The underlying undirected graph is acyclic: the entries
strictly increase along each directed edge, so the box with
the smallest entry on a putative cycle would have both of its
incident cycle edges outgoing, contradicting the uniqueness
of the outgoing edge. The graph is therefore a (directed)
forest. We call this the jeu de taquin forest of $T$,
and denote it by $\jdtforest(T)$; see
\cref{fig:jdt-forest}.

\begin{figure}[t]
	\centering
	\subfloat[]{%
		\label{fig:jdt-forest}%
		\parbox[c][0.46\textwidth][c]{0.46\textwidth}{\centering
			\resizebox{0.46\textwidth}{!}{%
				\subfile{figures/jdt-slide-09x09-before.tex}}}%
	}\hfill
	\subfloat[]{%
		\label{fig:forest-macro}%
		\parbox[c][0.46\textwidth][c]{0.46\textwidth}{\centering
			\resizebox{0.46\textwidth}{!}{%
				\subfile{figures/FIGURE-jdt_competition_LSVK_ultra2-full.tex}}}%
	}

	\caption{The jeu de taquin forest of a
		Plan\-che\-rel-distributed IYT $T$, shown at two scales.
		\protect\subref{fig:jdt-forest} A microscopic window: a
		portion of $T$ together with its forest $\jdtforest(T)$,
		whose oriented edges are drawn as arrows. The background
		colors indicate the trees of the forest, each associated
		with an entry $z_i$ through the bijection described in
		\cref{thm:complete-inverse} (boundary annotations); the jeu
		de taquin path of $T$ is highlighted in blue.
		\protect\subref{fig:forest-macro} A macroscopic view of the
		same realization: each tree occupies a thin wedge-shaped
		region extending toward its asymptotic direction, and the
		wedges crowd together near the two coordinate axes,
		reflecting the concentration of asymptotic directions. The
		same realization recurs throughout the paper's
		illustrations; see also
		\cref{fig:jdt-slide,fig:cusp-forest,fig:competition-tree}
		and the permutation example of
		\cref{subsec:random-permutation}.}

	\label{fig:forest-overview}
\end{figure}

The JDT forest of $T$ is made up of a disjoint union of
directed trees corresponding to the connected components of
the underlying undirected forest. Since every box has an
outgoing edge, each tree contains an infinite directed path;
in particular, every tree of the forest is infinite. One of our key results
states that these trees are in a bijective correspondence with the
numbers $z_1,z_2,\ldots$ produced by the inverse RSK map
$\RSK^{-1}(T)$. The statement is as follows.

\begin{theorem}[Partial inverse of the RSK map]
	\label{thm:main-result}
	Let $T$ be a Plan\-che\-rel-distributed random IYT, and let 
	$(z_1,z_2,\ldots) = \RSK^{-1}(T)$ be the associated sequence of i.i.d.
	$U(0,1)$-distributed random variables. The following statements hold almost
	surely:
	
	\begin{enumerate}
	
	\item Each of the trees $\tree$ in $\jdtforest(T)$
possesses an asymptotic direction $\asymdirect(\tree)$ (see
\cref{subsec:asymptotic-direction} for the definition).
	
	\item We have the equality of sets
	\[
	\left\{ \asymdirect(\tree) \,:\, \tree \textrm{ is a tree in }\jdtforest(T)
	\right\}
	= \{ z_1, z_2, \ldots \}.
	\]

	\item Distinct trees have distinct asymptotic directions;
	in particular, the map $\tree \mapsto \asymdirect(\tree)$
	is a bijection between the set of trees of $\jdtforest(T)$
	and the set $\{ z_1, z_2, \ldots \}$.

	\end{enumerate}
	
\end{theorem}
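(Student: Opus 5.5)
The plan is to relate the forest $\jdtforest(T)$ to the iterated jeu de taquin paths and then use \cref{thm:romiksniady-main}. The key combinatorial observation is that the jeu de taquin path of $T$ is exactly the forward directed path in $\jdtforest(T)$ starting at the box $(0,0)$. We also need to see how $\jdtforest(J(T))$ relates to $\jdtforest(T)$. Applying $J$ slides the boxes along the path $\jdtpath=((x_n,y_n))$ one step back. Away from a bounded neighbourhood of $\jdtpath$, the local comparisons $T_{x+1,y}$ versus $T_{x,y+1}$ are unchanged, so the edges of $\jdtforest(J(T))$ agree with those of $\jdtforest(T)$ there. Near $\jdtpath$ the edges change in a controlled way.

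I would prove a structural lemma describing this change precisely. The tree $\tree_0$ of $\jdtforest(T)$ containing $(0,0)$ is destroyed or rearranged along $\jdtpath$. Every other tree $\tree$ of $\jdtforest(T)$ corresponds to a unique tree $\tree'$ of $\jdtforest(J(T))$, which differs from $\tree$ only within distance one of $\jdtpath$. In particular, $\tree'$ contains all but finitely many boxes of any infinite directed path of $\tree$ that stays at distance at least two from $\jdtpath$. Since $\jdtpath$ has an asymptotic direction $z_1$, any tree with a different asymptotic direction is eventually far from $\jdtpath$, so the correspondence preserves asymptotic directions. Iterating, the tree containing $(0,0)$ in $J^{k}(T)$ corresponds to a tree $\tree_{k+1}$ of $T$ itself. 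That tree has asymptotic direction $\zeta(J^k T)=z_{k+1}$, computed via the forward path from the root. This is where the lifetime description mentioned in the abstract enters: a tree survives exactly until it contains the corner.

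To establish the three statements, I proceed in order.

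\begin{enumerate}
\item \textbf{Existence of directions.} Every box $(x,y)$ lies, after finitely many applications of $J$, on or adjacent to the corner region. This holds because entries decrease by one at each step, so $T_{x,y}$ reaches $1$ after $T_{x,y}-1$ steps, though the box moves. Hence every tree is some $\tree_{k}$, and all of its infinite directed paths eventually coalesce with a path transported from a jeu de taquin path. Asymptotic direction of the whole tree, for every branch rather than just one, then needs a sandwich argument. Trees are planar and non-crossing, so a tree lies between the two neighbouring trees. The directions $z_i$ are dense, so the wedge between two jeu de taquin paths with nearby directions $z_i<z_j$ squeezes each tree.
\item \textbf{Equality of sets.} This follows from the identification $\tree_{k+1}\leftrightarrow z_{k+1}$ together with surjectivity from step 1.
\item \textbf{Distinctness.} The $z_i$ are almost surely distinct, and the map $k\mapsto\tree_k$ is injective because distinct trees die at distinct times.
\end{enumerate}

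The main obstacle is the structural lemma: proving that the directions are preserved and that no tree splits or merges in a way that breaks the bijection under $J$. A related difficulty is controlling the boxes near $\jdtpath$ uniformly in all iterates. I would attack it by explicit case analysis of the $2\times2$ configurations along the sliding path. I would combine this with the almost-sure linearity of all jeu de taquin paths $J^k(T)$, which holds since $J$ is measure preserving by \cref{thm:romiksniady-main}, to obtain the needed wedge separation.
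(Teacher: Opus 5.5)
Your architecture matches the paper's: peripheral trees survive $\jdt$, lifetimes are finite, the tree of lifetime $k$ carries $z_k$, and a density sandwich squeezes each tree. But the step you flag as the main obstacle is a genuine gap, and the route you propose will not close it.

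\emph{The structural lemma is not correct as stated.} Outgoing edges change only next to $\gamma_1$. However, $\iota(\tree)$ absorbs every box of $\origin_1$ whose trajectory passes through a rerouted box, and such boxes can lie far from the path. The local fact that \emph{is} true needs no case analysis. Sliding only increases entries along the path, so the outgoing edge of each box of a peripheral tree is unchanged. This gives $\tree\subseteq\iota(\tree)$ and $\iota(\tree)\setminus\tree\subseteq\origin_1$ (\cref{thm:jdt-forest-evolution-weak}). It says nothing about where the remaining boxes of $\origin_1$ end up.

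\emph{The ``direction preservation'' argument is circular.} It presupposes that trees already have asymptotic directions, which is what Step~1 is supposed to establish.

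\emph{The backward correspondence is unproved, and this is the most serious point.} Steps~1 and~2 use it: the claim that the tree at the origin of $\jdt^{k}(T)$ comes from a tree $\tree_{k+1}$ of $T$. You need it for the identification in Step~2, and implicitly to treat the comparison paths $\gamma_i,\gamma_j$ as paths of $\jdtforest(T)$ for the wedge. It amounts to showing that $\jdt$ never assembles a new tree entirely out of boxes of $\origin_1$. Such a tree would be an infinite subset of $\origin_1$, so the claim is global and asymptotic; the paper proves it only for Plancherel-normal tableaux. A $2\times2$ case analysis along the sliding path cannot provide it.

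The missing idea is to run the sandwich \emph{first}, only for trees at the origin, before any bijection is available. The separation you mention at the end is exactly what makes this possible.
\begin{itemize}
\item Since $z_1,\dots,z_k$ are distinct, the tail of $\gamma_k$ and its neighbouring boxes are eventually untouched by the slides along $\gamma_1,\dots,\gamma_{k-1}$. So that tail is already a trajectory of $T$ (\cref{lem:trajectory_in_tree}).
\item Choose $z_k\in(z_1-\epsilon,z_1)$ and compare this tail with the trajectory of any box of $\origin_1$ with large entry, via the non-crossing property (\cref{lem:order-preservation}). Doing the same from above gives $\asymdirect(\origin_1)=z_1$. Applied to $T_n$, this gives $\asymdirect(\origin_n)=z_n$.
\item Tracked trees only grow, so $\tree\subseteq\origin_{n(\tree)}$. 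This yields item~1 with $\asymdirect(\tree)=z_{n(\tree)}$, and item~3 follows from injectivity of lifetimes.
\item For the reverse inclusion in item~2, the paper deduces that $\origin_1\cap\origin_n$ is finite for $n\ge2$. A tree of $\jdt(T)$ outside the image of $\iota$ would then be finite (\cref{prop:no-residue}), so every $n$ is a lifetime. Alternatively, the tail of $\gamma_n$ is a trajectory of $T$, so the tree of $T$ containing it must have direction $z_n$.
\end{itemize}
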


\cref{thm:main-result} shows that, for a Plancherel-random
IYT $T$, the preimage $(z_1,z_2,\ldots)$ of $T$ under the
inverse RSK map is determined \emph{up to the ordering of
	the $z_n$'s} by the asymptotic directions of the trees in
the jeu de taquin forest of $T$. Moreover, some partial
information about the ordering of these numbers can also be
inferred by inspecting the forest: for example, $z_1$ is the
asymptotic direction of the unique tree in the forest
containing the box $(0,0)$ at the origin (see
\cref{lem:tree_uniform_limit}).

The full ordering can be recovered as well, at the price of
bringing back the jeu de taquin map~$J$. As we show in
\cref{sec:proofs-general}, under each application of $J$
exactly one tree of the forest --- the one containing the
origin --- dissolves into the neighbouring trees, while all
the other trees survive, only grow, and can be tracked
through the transformation. The \emph{lifetime} $n(\tree)$
of a tree $\tree$ is the number of applications of $J$ after
which $\tree$ is consumed in this way (see
\cref{sec:proofs-general} for the precise definition). The
following result upgrades \cref{thm:main-result} to a
complete description of the inverse RSK map.

\begin{theorem}[Complete inverse of the RSK map via lifetimes]
	\label{thm:complete-inverse}
	Let $T$ be a Plan\-che\-rel-distributed random IYT, and
	let $(z_1,z_2,\ldots) = \RSK^{-1}(T)$ be the associated sequence of i.i.d.
	$U(0,1)$-distributed random variables. Then, almost
	surely, the lifetime map $\tree \mapsto n(\tree)$ is a
	bijection between the set of trees of $\jdtforest(T)$ and
	the positive integers $\N$, and the numbers $z_n$ are given by
	\[
		z_n = \asymdirect(\tree_n) \qquad (n\ge1),
	\]
	where $\tree_n$ denotes the unique tree with lifetime
	$n(\tree_n) = n$.
\end{theorem}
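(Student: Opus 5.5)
Since \cref{thm:main-result} already gives, almost surely, a bijection $\tree\mapsto\asymdirect(\tree)$ between the trees of $\jdtforest(T)$ and $\{z_1,z_2,\ldots\}$, the plan is to track how the forest changes under one application of $J$ and then iterate using the intertwining relation $J\circ\RSK=\RSK\circ S$ from \cref{thm:romiksniady-main}. The first step is a deterministic combinatorial lemma about one step of jeu de taquin. Let $\tree_0$ be the tree containing the origin. Every tree $\tree\neq\tree_0$ of $\jdtforest(T)$ corresponds to a tree $\tree'$ of $\jdtforest(J(T))$ whose box set contains the shifted image of $\tree$; the shift is $(x,y)\mapsto(x-1,y)$ or $(x,y-1)$ for boxes lying just beyond the jeu de taquin path, and the identity elsewhere. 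The boxes of $\tree_0$ off the jeu de taquin path are redistributed among these surviving trees. Each tree of $J(T)$ arises in this way from exactly one tree $\tree\neq\tree_0$ of $T$. I would prove this by a local case analysis. Off the path, the comparisons $T_{x+1,y}$ versus $T_{x,y+1}$ are unchanged, apart from the uniform decrement, which is irrelevant. Near the path, the promotion moves entries by one step, and one checks that edges either persist or are rerouted into neighbouring trees. This lemma makes precise the notion that $\tree_0$ ``dissolves'' and defines the induced injection $\iota_T$ from the trees of $T$ other than $\tree_0$ onto the trees of $J(T)$. The lifetime $n(\tree)$ is then the least $n\ge1$ such that the iterate of $\tree$ under $\iota_T,\iota_{J(T)},\ldots$ is the origin tree of $J^{n-1}(T)$; by definition it equals $\infty$ if no such $n$ exists.

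Second, I would show that asymptotic directions are preserved: $\asymdirect(\iota_T(\tree))=\asymdirect(\tree)$. The boxes move by at most one unit, and the tree $\iota_T(\tree)$ differs from $\tree$ only by a shift and by additions from $\tree_0$. Whatever the definition in \cref{subsec:asymptotic-direction} is (a limit of $(x-y)/\sqrt{T_{x,y}}$ along the tree, passed through $1-F$), a bounded perturbation of positions and entries does not change it. The added pieces of $\tree_0$ must not spoil the limit, and that is automatic once we know that $\iota_T(\tree)$ has an asymptotic direction at all, which holds a.s. by \cref{thm:main-result} applied to $J(T)$. Here we use that $J$ preserves $P$, so that almost sure statements transfer to all $J^k(T)$ simultaneously by a countable union.

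Third, the identification. By \cref{thm:romiksniady-main}, the origin tree of $J^{n-1}(T)$ has direction $\zeta(J^{n-1}T)=z_n$; more precisely, \cref{lem:tree_uniform_limit} identifies the direction of the origin tree with $\zeta$. Hence any tree with lifetime $n$ has direction $z_n$. By the injectivity of directions in \cref{thm:main-result}(3), applied to $J^{n-1}(T)$, at most one tree of $T$ can have lifetime $n$, so the lifetime map is injective on trees with finite lifetime. Surjectivity onto $\N$ follows from the bijectivity of the maps $\iota$: the origin tree of $J^{n-1}(T)$ pulls back to a unique tree of $T$. The remaining point is that no tree has infinite lifetime. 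A tree $\tree$ of $T$ has some direction, which equals some $z_m$ by \cref{thm:main-result}(2). Its iterate in $J^{m-1}(T)$ carries the same direction, and so does the origin tree of $J^{m-1}(T)$. By distinctness of directions in $J^{m-1}(T)$ these two trees coincide, so $n(\tree)=m$; the case where $\tree$ was consumed earlier is impossible, since then its direction would equal some $z_k$ with $k<m$, contradicting that the $z_i$ are a.s. distinct. This gives both the bijection and the formula $z_n=\asymdirect(\tree_n)$.

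The main obstacle I expect is the first, combinatorial lemma: showing that promotion along the jeu de taquin path never merges two distinct surviving trees and never splits one. The edges adjacent to the path change in a way that must be checked carefully, comparing $T_{x+1,y}$ versus $T_{x,y+1}$ before and after a slide for boxes diagonally adjacent to the path. I would first try to prove it via the characterization that boxes $b$ and $b'$ lie in the same tree iff their forward paths eventually merge, and then show that forward paths of boxes not on $\tree_0$ are transformed by the slide into forward paths in $J(T)$, up to the unit shift.
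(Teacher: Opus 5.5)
Your first step claims something false: that, deterministically, every tree of $\jdtforest(\jdt(T))$ comes from exactly one peripheral tree of $T$, with the origin tree redistributed among the survivors.

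Here is a counterexample. Let $T$ list the boxes in increasing order of the weight $y+c_x$, where $c_0=0$, $c_1=\tfrac12$ and $c_x=2x$ for $x\ge2$. Ties occur only between incomparable boxes, so they may be broken arbitrarily.
\begin{itemize}
\item In $\jdtforest(T)$, every box of column $0$ points east and every other box points north. So $\origin_1$ is the union of columns $0$ and $1$, the peripheral trees are the columns $x\ge2$, and $\gamma_1$ steps from $(0,0)$ to $(1,0)$ and then runs up column $1$.
\item After the slide, $(0,0)$ carries weight $\tfrac12$, each $(0,y)$ with $y\ge1$ still carries $y$, and $(1,y)$ carries $y+\tfrac32$.
\item Now every box points north. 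Columns $0$ and $1$ are brand-new trees outside the image of $\iota$, and no tree of $T$ has lifetime $2$.
\end{itemize}
This is why the paper proves \cref{prop:no-residue} only for Plancherel-normal tableaux. It deduces it from the finiteness of $\origin_1\cap\origin_n$ (\cref{thm:trees-almost-disjoint}), i.e.\ from the angular confinement of the origin trees.

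Relatedly, there is no shift. Only entries on $\gamma_1\subseteq\origin_1$ move. The chosen successor of a peripheral box cannot lie on $\gamma_1$, so a neighbour on $\gamma_1$ is the non-chosen one, and its entry only increases. Hence all outgoing edges of peripheral boxes persist, and $\tree\subseteq\iota(\tree)$ as box sets (\cref{thm:jdt-forest-evolution-weak}). So what you call the main obstacle (no merging, no splitting) is immediate, while what you hoped the case analysis would give (no new trees) cannot be proved deterministically.

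Fortunately your argument never needs surjectivity of $\iota$. Your last paragraph uses only the injections, direction preservation, and \cref{thm:main-result} for all $\jdt^k(T)$ at once. Direction preservation is just $\tree\subseteq\iota(\tree)$, since in the paper $\asymdirect$ depends only on box positions. That paragraph already shows that every tree $\tree$ of $T$ has $n(\tree)=m$, where $\asymdirect(\tree)=z_m$. Combine this with parts (2) and (3) of \cref{thm:main-result} for $T$ itself. The lifetime map is then the bijection $\tree\mapsto\asymdirect(\tree)$ onto $\{z_n\}$ followed by $z_n\mapsto n$, which gives the bijection and the formula at once. So drop the appeal to bijectivity of $\iota$; for injectivity, apply part (3) to $T$, not to $\jdt^{n-1}(T)$. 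With this repair the argument is correct.

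It is, however, a much thinner route than the paper's. You import \cref{thm:main-result} as a black box, whereas the paper derives its parts (2) and (3) from the very lifetime bijection you are proving: \cref{prop:lifetime-bijection,thm:pn-trajectory-tree-bijection} feed \cref{prop:pn-tree-properties}. The paper supplies the probabilistic content directly. Density of the $z_n$ and order preservation of lazy trajectories give $\asymdirect(\origin_n)=z_n$. That yields finite overlaps $\origin_i\cap\origin_j$, hence finite growth of peripheral trees and surjectivity of $\iota$. Finiteness of lifetimes is even deterministic (\cref{prop:finite-time-origin}). Your reduction shows neatly that the lifetime bijection follows formally from the direction bijection plus the deterministic injection. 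But it is only as strong as the imported theorem, whose proof must contain exactly the ingredients the paper develops here.
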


\subsection{Related constructions of directed graphs}
Directed graph structures defined by local comparison rules on
tableau entries have appeared in related contexts. Fang \cite{Fang2015}
studied a \emph{trace forest of jeu de taquin} for finite skew tableaux
with equivalent local rules (after rotation), but focused on
Littlewood--Richardson coefficients rather than infinite tableaux or
asymptotic directions. In the study of last passage percolation,
the notion of \emph{geodesic coalescence} generates similar tree structures
\cite{CatorPimentel2013,FerrariPimentel2005,GeorgiouRassoulAghaSeppaelaeinen2017}.

\subsection{Structure of the paper} In the process of
proving \cref{thm:main-result,thm:complete-inverse}, we will prove many more
interesting facts that reveal subtle aspects of the
structure of jeu de taquin forests and the way they interact
with the jeu de taquin map, and are of independent interest.
Thus, the paper does not have a clearly defined ``main
result''. 

The outline is as follows: in
\cref{sec:trajectories-trees-cusps} we develop the basic combinatorics
of jeu de taquin trajectories, trees, and their cusps, and in
\cref{sec:proofs-general} we study how the forest evolves under the jeu
de taquin map; both sections concern an arbitrary IYT and use no
probabilistic (or deterministic) assumptions.
\cref{sec:inverse-rsk-details} recalls some standard facts from the
theory of Plancherel measure, and recasts them in geometric
language by defining a coordinate system that relates in a
useful way to the geometry of jeu de taquin paths --- a coordinate system which
we label \emph{RSK-polar coordinates}.
\cref{sec:proofs-plancherel-normal} proves additional results on jeu
de taquin forests that are specific to a deterministic class
of IYTs we call \emph{Plancherel-normal}, which can be
interpreted as ``typical'' random tableaux sampled from
Plancherel measure.
Theorems~\ref{thm:main-result}~and~\ref{thm:complete-inverse} are proved in
\cref{subsec:proof-thm-main-result}.

The final section, \cref{sec:final-remarks}, is more speculative in nature, and
contains a detailed discussion of several open problems and conjectures that
arise from the ideas introduced in the current paper. This makes the point that
jeu de taquin forests are an important new direction for the study of infinite
Young tableaux, and that there is much more still to study and understand about
them.

\section{Jeu de taquin trajectories, trees, and cusps}
\label{sec:trajectories-trees-cusps}

In this section we work in a purely combinatorial setting, without any
probability and
without invoking the jeu de taquin map. Throughout the section, $T$ denotes a
fixed (deterministic) IYT.

\subsection{Trajectories and trees}
\label{subsec:trajectories-trees}

We generalize the jeu de taquin trajectories studied in
\cite{RomikSniady2015}, which focused on paths starting from the origin,
by considering trajectories starting at arbitrary initial boxes.

\begin{definition}[Jeu de taquin trajectory]
    \label{def:jdt-trajectory}
    Let $T$ be an IYT and $(x,y) \in \N_0^2$
    a box. The \emph{jeu de taquin trajectory} $\jdtpath^T_{x,y}$ is the
    unique infinite path in the jeu de taquin forest of $T$ that starts
    at $(x,y)$ and follows the directed edges.
\end{definition}

Since the jeu de taquin forest consists of trees with edges pointing
away from the origin, each trajectory extends toward infinity. The
trajectory from the origin, $\jdtpath^T_{0,0}$, coincides with the
classical jeu de taquin path studied in \cite{RomikSniady2015}.

Trajectories are governed by an elementary confluence property.

\begin{lemma}[Trajectories in a common tree coincide]
\label{lem:trajectories-merge}
Two boxes lie in the same tree of $\jdtforest(T)$ if and only if their
jeu de taquin trajectories eventually coincide.
\end{lemma}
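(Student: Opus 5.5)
The plan is to prove the two implications separately. The only structure used is that $\jdtforest(T)$ is a directed graph in which every box has exactly one outgoing edge, and that its underlying undirected graph is acyclic, as shown in the introduction. The ``if'' direction is immediate. Suppose the trajectories $\jdtpath^T_{a}$ and $\jdtpath^T_{b}$ eventually coincide, so that both pass through some common box $c$. Then the path from $a$ to $c$ followed by the reversed path from $c$ to $b$ is a walk in the underlying undirected graph, and hence $a$ and $b$ lie in the same connected component, that is, the same tree.

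For the ``only if'' direction, I will first record the elementary confluence observation. Since each box has a unique outgoing edge, the trajectory from a box is determined by that box alone. Consequently, if $\jdtpath^T_a$ and $\jdtpath^T_b$ share a single box $c$, then from $c$ onward they coincide. So it suffices to show that two boxes in the same tree have trajectories sharing at least one box. It suffices to prove this for adjacent boxes $a,b$ joined by an edge, and then pass along an undirected path $a=v_0,v_1,\dots,v_k=b$ inside the tree by induction on $k$. The inductive step uses transitivity of the relation ``trajectories share a box.'' This transitivity holds because if $\jdtpath^T_{v_0}$ and $\jdtpath^T_{v_1}$ merge at $c$, and $\jdtpath^T_{v_1}$ and $\jdtpath^T_{v_2}$ merge at $c'$, then the tails beyond whichever of $c,c'$ comes later along $\jdtpath^T_{v_1}$ are common to all three trajectories.

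For a single edge between $v_i$ and $v_{i+1}$, the edge is directed one way, say $v_i\to v_{i+1}$. Then $v_{i+1}$ is the second box of $\jdtpath^T_{v_i}$, so the two trajectories share $v_{i+1}$ and coincide thereafter by the confluence observation. The reverse orientation is symmetric.

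No step is a genuine obstacle. The only point needing care is that a walk in the undirected tree may traverse edges against their orientation, and the edge-by-edge induction handles this uniformly. Acyclicity is not even needed for this lemma; only out-degree one is used.
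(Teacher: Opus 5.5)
Your proposal is correct and follows essentially the same route as the paper. In both, confluence from the unique outgoing edge gives the ``if'' direction, and the converse is reduced via transitivity to a single directed edge, whose head is the second box of the tail's trajectory; your remark that only out-degree one (not acyclicity) is needed is also accurate.
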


\begin{proof}
If two trajectories share a box then, every box having a single outgoing
edge, they coincide from that box onward; in particular their starting
boxes lie in the same tree. Conversely, ``eventually coinciding'' is a
transitive relation on boxes, so it suffices to verify it for two boxes
$u,v$ joined by an edge of $\jdtforest(T)$. That edge is oriented one
way, say from $u$ to $v$; then $v$ is the second box of the trajectory of
$u$, so the trajectory of $u$ coincides with that of $v$ from $v$ onward.
Applying transitivity along the finite path joining any two boxes of a
common tree, their trajectories eventually coincide.
\end{proof}

\subsection{The lazy parametrization}

We parametrize trajectories using the \emph{lazy parametrization} of
\cite{RomikSniady2015}, adapted to our slightly more general setting. Let
$t_0 = T_{x,y}$ be the entry at the starting box $(x,y)$; since entries
increase along the trajectory, $t_0$ is the smallest entry occurring on
it. For each integer $t \geq 0$ we define $\jdtpath^T_{x,y}[t] \in \N_0^2$
to be the last box along the trajectory whose entry is at most
$\max(t, t_0)$: the box rests at its starting position $(x,y)$ until time
$t_0$, and thereafter advances one step each time $t$ reaches the entry of
the next box to the north or east along the path. We write the parameter
in square brackets to distinguish the lazy parametrization
$t \mapsto \jdtpath^T_{x,y}[t]$ from the trajectory $\jdtpath^T_{x,y}$
itself, defined in \cref{def:jdt-trajectory} as its underlying set of
boxes.

The point of the parametrization is that it lets us compare several
trajectories at a common \emph{time} $t$ by recording their positions on
the boundary of the Young diagram
\[
\lambda^{(t)} = \{(x',y') : T_{x',y'} \leq t\} .
\]

\subsection{Comparison of trajectories}
\label{subsec:trajectory-comparison}

\subsubsection{Partial orders on the quarter-plane}

We introduce a partial order that will be essential for comparing 
trajectories and their asymptotic directions.

Define a partial order on $\N_0^2 \setminus \{(0,0)\}$ by
\begin{equation}
	\label{eq:partial_order}
	(x_1,y_1) \preceq (x_2,y_2)
	\iff x_1 \geq x_2 \text{ and } y_1 \leq y_2.
\end{equation}
This order has a natural geometric interpretation: $(x_1,y_1) \preceq 
(x_2,y_2)$ means that $(x_1,y_1)$ is weakly to the northwest of 
$(x_2,y_2)$.

We will also use a separate coordinatewise
order on $\N_0^2$, defined by
\[
	(x_1,y_1) \trianglelefteq (x_2,y_2)
	\iff x_1 \leq x_2 \text{ and } y_1 \leq y_2,
\]
under which $(x_2,y_2)$ lies weakly to the northeast of $(x_1,y_1)$.

\subsubsection{Convex corners}

A box in a Young diagram is called a \emph{convex corner}
(also called a \emph{removable box} or \emph{inner corner})
if it can be removed while still leaving a valid Young
diagram. The next lemma states two elementary properties of
convex corners; since the claims are geometrically obvious,
we do not include a proof.

\begin{lemma}[Convex corners]
	\label{lem:convex-corners}
	\
	
	\begin{enumerate}[label=(\roman*)]
		\item 
		\label{item:tc-2}
		Any two convex corners of a Young diagram are comparable with 
		respect to the partial order $\preceq$ defined in 
		\eqref{eq:partial_order}.
		
		\item 
		\label{item:tc-3}
		If $(x_1,y_1)$ and $(x_2,y_2)$ are two distinct
		convex corners of a Young diagram, then their Russian
		coordinates $u_i = x_i - y_i$ differ by at least $2$:
		$|u_1 - u_2| \geq 2$.
	\end{enumerate}
\end{lemma}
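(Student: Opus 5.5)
The plan is to characterize convex corners directly from the definition of a Young diagram and then read off both claims. Let $\lambda$ be a Young diagram, viewed as a finite down-set in $\N_0^2$ under the coordinatewise order $\trianglelefteq$. A box $(x,y)\in\lambda$ is a convex corner exactly when $\lambda\setminus\{(x,y)\}$ is still a down-set. This holds if and only if no box of $\lambda$ lies strictly to the northeast of $(x,y)$ in the $\trianglelefteq$ sense, which is equivalent to $(x+1,y)\notin\lambda$ and $(x,y+1)\notin\lambda$.

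For part \ref{item:tc-2}, take two distinct convex corners $(x_1,y_1)$ and $(x_2,y_2)$. I first rule out $x_1=x_2$. If $x_1=x_2$, say with $y_1<y_2$, then $(x_1,y_1+1)\trianglelefteq(x_2,y_2)\in\lambda$, so $(x_1,y_1+1)\in\lambda$ because $\lambda$ is a down-set. That contradicts $(x_1,y_1)$ being a corner. The same argument excludes $y_1=y_2$.

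So we may assume $x_1<x_2$. If also $y_1\le y_2$, then $(x_1+1,y_1)\trianglelefteq(x_2,y_2)$, so $(x_1+1,y_1)\in\lambda$, again a contradiction. Hence $y_1>y_2$. Together with $x_1<x_2$, this gives $(x_2,y_2)\preceq(x_1,y_1)$, which is the required comparability. Both corners avoid $(0,0)$ whenever there are at least two of them, since $(0,0)$ is a corner only when $\lambda=\{(0,0)\}$. The order $\preceq$ is therefore defined on them.

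For part \ref{item:tc-3}, the argument above shows that with $x_1<x_2$ we have $y_1\ge y_2+1$. Then
\[
u_2-u_1=(x_2-x_1)+(y_1-y_2)\ge 1+1=2,
\]
which gives $|u_1-u_2|\ge 2$.

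There is no real obstacle here. The only step needing care is the initial characterization of corners via the down-set property, and everything afterwards is a direct consequence of it.
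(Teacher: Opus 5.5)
Your proof is correct and complete. The paper gives no proof, calling both claims geometrically obvious. Your argument (characterize convex corners by $(x+1,y),(x,y+1)\notin\lambda$, then use down-closedness to show that $x_1<x_2$ forces $y_1>y_2$) is exactly the routine verification being left to the reader. Your observation that $(0,0)$ can be a corner only when $\lambda=\{(0,0)\}$ also usefully covers the fact that $\preceq$ is defined only on $\N_0^2\setminus\{(0,0)\}$.
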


The lazy parametrization reveals important geometric properties of 
trajectories, particularly their relationship to convex corners of 
Young diagrams.

\begin{lemma}[Trajectories and convex corners]
	\label{lem:trajs-convex-corners}
		For any trajectory $\jdtpath^T_{x,y}$ and any $t \geq T_{x,y}$,
		the position $\jdtpath^T_{x,y}[t]$ is a convex corner of the
		Young diagram~$\lambda^{(t)}$.
\end{lemma}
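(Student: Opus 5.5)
Let $(a,b)=\jdtpath^T_{x,y}[t]$ for a given $t\ge T_{x,y}$. By the definition of the lazy parametrization, $(a,b)$ is the last box on the trajectory with entry at most $\max(t,T_{x,y})=t$. Two things must be checked:
\begin{itemize}
\item $(a,b)\in\lambda^{(t)}$;
\item neither $(a+1,b)$ nor $(a,b+1)$ lies in $\lambda^{(t)}$.
\end{itemize}
The first is immediate, since $T_{a,b}\le t$. The second is exactly the condition for $(a,b)$ to be removable. Indeed, $\lambda^{(t)}$ is a Young diagram because the entries increase along rows and columns, and a box of a Young diagram is a convex corner precisely when the boxes directly east and directly north of it are absent. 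So the whole proof reduces to showing
\[
T_{a+1,b}>t \qquad\text{and}\qquad T_{a,b+1}>t .
\]

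The key step uses the local rule defining the outgoing edge of $(a,b)$. The next box $(a',b')$ on the trajectory is whichever of $(a+1,b)$ and $(a,b+1)$ carries the smaller entry, so
\[
T_{a',b'}=\min\bigl(T_{a+1,b},\,T_{a,b+1}\bigr).
\]
Because $(a,b)$ is the last box on the trajectory with entry at most $t$, and entries strictly increase along the trajectory, the next box satisfies $T_{a',b'}>t$. Therefore both neighbours have entries exceeding $t$, neither belongs to $\lambda^{(t)}$, and $(a,b)$ is a convex corner.

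I also need the "last box" to exist. The entries along the trajectory strictly increase, so they tend to infinity, and only finitely many boxes have entry at most $t$. This set is nonempty because it contains the starting box $(x,y)$, since $T_{x,y}\le t$. So the last box is well defined.

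No step here is a real obstacle. The only point requiring care is to use the fact that the successor's entry is the minimum of the two neighbouring entries, which is what controls both neighbours at once.
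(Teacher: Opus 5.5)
Your proof is correct and follows essentially the same argument as the paper: the successor's entry is $\min(T_{a+1,b},T_{a,b+1})$, which must exceed $t$ because $(a,b)$ is the last box on the trajectory with entry at most $t$, so both neighbours lie outside $\lambda^{(t)}$. Your extra check that this last box exists is a small point the paper leaves implicit; it holds because entries strictly increase along the trajectory and the starting box has entry at most $t$.
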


\begin{proof}
	Let $(x',y') = \jdtpath^T_{x,y}[t]$.
	By definition of the lazy parametrization, $(x',y')$ is the
	last box along the trajectory with entry at most $t$, hence
	$(x',y') \in \lambda^{(t)}$. The next box in the jeu de
	taquin path is either the eastern neighbor $(x'+1,y')$ or the
	northern neighbor $(x',y'+1)$, whichever contains the smaller
	entry in $T$. It follows that the entry in the next box is
	\[
	m = \min\big\{T_{x'+1,y'},\ T_{x',y'+1}\big\}.
	\]
	Since $(x',y')$ is the \emph{last} box with entry at most $t$,
	the next entry on the trajectory satisfies $m > t$. Therefore,
	both $T_{x'+1,y'} > t$ and $T_{x',y'+1} > t$, which means both
	$(x'+1,y')$ and $(x',y'+1)$ lie outside $\lambda^{(t)}$; that
	is an equivalent description of what it means for $(x',y')$ to be a convex
	corner.
\end{proof}

\subsubsection{Order preservation under the lazy dynamics}

The next lemma states the useful fact that the partial order
between two lazy trajectories, once present, is never lost.

\begin{lemma}[Order preservation]
	\label{lem:order-preservation}
	Let $T$ be an IYT and let $\jdtpath^T_a, \jdtpath^T_b$ be the lazy
	trajectories starting at boxes $a, b \in \N_0^2$. If
	$\jdtpath^T_a[s_0] \preceq \jdtpath^T_b[s_0]$ for some integer
	$s_0 \geq 0$, then
	\[
		\jdtpath^T_a[t] \preceq \jdtpath^T_b[t]
		\qquad \text{for all } t \geq s_0 .
	\]
	Moreover, once the two positions coincide they remain equal at all
	later times.
\end{lemma}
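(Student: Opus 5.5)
Since time is discrete, I will argue by induction on $t\ge s_0$. The claim then reduces to a one-step statement: if $\jdtpath^T_a[t]\preceq\jdtpath^T_b[t]$, then $\jdtpath^T_a[t+1]\preceq\jdtpath^T_b[t+1]$. If the two positions coincide at time $t$, then at every later time both trajectories sit at the same box. This is because a lazy trajectory moves at time $t+1$ exactly when the next box on its path has entry $t+1$, and that box is determined by the current box through the single outgoing edge. So both trajectories move identically, which proves the "moreover" part and the equality case of the step.

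A preliminary point concerns times before the entries $T_a$ and $T_b$, where a trajectory is frozen at its start and need not be a convex corner. The cleanest approach is to split on the time. First, at any time $t\ge\max(T_a,T_b)$, both positions are convex corners of $\lambda^{(t)}$ by \cref{lem:trajs-convex-corners}. Second, between $s_0$ and the moment both have started, one of the points, say $\jdtpath^T_a$, may still be frozen at $a$ with $T_a>t$. In that case I will check directly that a single step of the other trajectory cannot break the order.

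The key observation is that only one box, the one with entry $t+1$, is added in passing from $\lambda^{(t)}$ to $\lambda^{(t+1)}$. Hence at most one of the two trajectories moves at step $t+1$, unless they coincide. Suppose $p=\jdtpath^T_a[t]\prec q=\jdtpath^T_b[t]$ are distinct, so $p$ is weakly northwest of $q$. There are two cases.

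\begin{itemize}
\item If $p$ moves, it goes to $p+(1,0)$ or $p+(0,1)$. Moving north keeps $p$ northwest of $q$. Moving east could break the order only if $p$ and $q$ share an $x$-coordinate. But $p$ and $q$ are distinct convex corners, so by \cref{lem:convex-corners}\ref{item:tc-3} they differ in both coordinates, since a $\preceq$-comparable pair of corners with equal $x$-coordinate would contradict the corner property. Therefore the eastward move gives $x_p+1\le x_q$ together with $y_p<y_q$, and the order is preserved.
\item If $q$ moves, the symmetric argument applies: moving east keeps $q$ southeast of $p$, and moving north could fail only if $y_p=y_q$, which is excluded in the same way.
\end{itemize}

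The move may also produce equality, which is still consistent with $\preceq$. The frozen case is handled the same way: the box $a$ lies outside $\lambda^{(t)}$, so the moving corner cannot enter the column or row of $a$ in a way that crosses it. Entering the position $a$ itself would make the two positions equal, and a crossing would require passing through a box that is not yet filled.

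I expect the main obstacle to be this bookkeeping before both trajectories have started. A frozen start box $a$ with $T_a>t$ is not in $\lambda^{(t)}$, so the corner argument does not apply to it. There I would argue directly via monotonicity of entries along rows and columns: if $q$ steps east into column $x_a$ while $y_q\le y_a$, then $T_{q}\le t<T_a$ is consistent with the order. The remaining subcase to check is that a crossing step would force a box weakly southwest of $a$ to have entry larger than $T_a$, which is impossible in an IYT.
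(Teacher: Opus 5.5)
Your proposal is correct and follows essentially the paper's route: induction on $t$ through a one-step claim, with the waiting case settled by down-closedness of $\lambda^{(t)}$, and the active case settled by convex-corner geometry. In the waiting case, a tie in the critical coordinate would make the frozen start box $\trianglelefteq$ the active position and so force it into $\lambda^{(t)}$; your phrase about ``a box weakly southwest of $a$'' has these roles reversed. In the active case, your fact that distinct corners differ strictly in both coordinates replaces the paper's Russian-coordinate gap of $2$ combined with comparability at time $t+1$.

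There is one orientation slip. By the formal definition, $p\preceq q$ means $x_p\ge x_q$ and $y_p\le y_q$, so $p$ lies to the southeast despite the paper's ``northwest'' gloss. Hence the steps that need strictness are a northward step of $p$ and an eastward step of $q$, and your displayed inequality should read $y_p+1\le y_q$ with $x_p>x_q$. This is harmless, since you establish strictness in both coordinates anyway.
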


\begin{proof}
	Write $p[t] = \jdtpath^T_a[t]$ and $q[t] = \jdtpath^T_b[t]$. It
	suffices to prove the one-step claim that $p[t] \preceq q[t]$
	implies $p[t+1] \preceq q[t+1]$; the assertion then follows by
	induction on $t \geq s_0$. Two facts are used repeatedly: along any
	trajectory each coordinate is non-decreasing in $t$ (a step
	increases $x$ or $y$ by one, and a waiting box does not move); and
	$\lambda^{(t)}$ is a Young diagram, hence \emph{down-closed} --- if
	$(x',y') \in \lambda^{(t)}$ then every box $\trianglelefteq (x',y')$
	also lies in $\lambda^{(t)}$.

	Assume $p[t] \preceq q[t]$.

	\medskip

	\emph{Case 1: at least one of the two boxes is still waiting at
	time~$t$}, say $b$, so that $q[t] = b \notin \lambda^{(t)}$. From
	$p[t] \preceq b$ we have $x(p[t]) \geq x_b$ and $y(p[t]) \leq y_b$.
	If $p[t]$ were active with $y(p[t]) = y_b$, then $b \trianglelefteq
	p[t]$, and down-closedness would force $b \in \lambda^{(t)}$, a
	contradiction; hence either $p[t]$ also waits or $y(p[t]) < y_b$. In
	either situation a single step keeps $x(p[t+1]) \geq x_b$ and
	$y(p[t+1]) \leq y_b$, while $q[t+1] = b$ (a box does not move on the
	step at which it is born). Thus $p[t+1] \preceq q[t+1]$. The case
	where $a$ waits is symmetric, with the roles of the two coordinates
	exchanged.
	
	\medskip

	\emph{Case 2: both boxes are active at time~$t$.} Then $p[t], q[t]$
	are convex corners of $\lambda^{(t)}$
	(\cref{lem:trajs-convex-corners}), as are $p[t+1], q[t+1]$ of
	$\lambda^{(t+1)}$. If $p[t] = q[t]$, the trajectories have merged and
	thereafter follow identical rules, so they remain equal. Otherwise
	$p[t] \preceq q[t]$ yields $u(p[t]) \geq u(q[t])$ for the Russian
	coordinate $u = x - y$, with gap at least~$2$
	(\cref{lem:convex-corners}\ref{item:tc-3}). Each coordinate, and
	hence $u$, changes by at most~$1$ along a trajectory per unit time,
	so
	\[
		u(p[t+1]) - u(q[t+1])
		\geq \big( u(p[t]) - u(q[t]) \big) - 2 \geq 0 .
	\]
	As $p[t+1]$ and $q[t+1]$ are comparable convex corners
	(\cref{lem:convex-corners}\ref{item:tc-2}), the inequality
	$u(p[t+1]) \geq u(q[t+1])$ gives $p[t+1] \preceq q[t+1]$, with
	equality precisely when the two positions coincide.
\end{proof}

\subsection{The cusp of a tree}
\label{subsec:cusp}

Fix an IYT $T$ and a tree $\tree$ of its jeu de taquin forest
$\jdtforest(T)$. The first coordinates of the boxes of $\tree$ form a
non-empty subset of $\N_0$ and so attain a minimum
$x_{\min} = \min\{ x : (x,y) \in \tree \}$, realised by one or more
\emph{leftmost} boxes of $\tree$; likewise the second coordinates attain
a minimum $y_{\min}$, realised by one or more \emph{lowest} boxes. These
extremal boxes need not be unique.

\begin{definition}[Cusp of a tree]
	\label{def:cusp}
	The \emph{cusp} of the tree $\tree$ is the box
	\[
		\cusp(\tree) := (x_{\min}, y_{\min}) \in \N_0^2 ,
	\]
	formed from the minimal first and second coordinates occurring in~$\tree$.
\end{definition}

A priori the cusp is merely a point of the quarter-plane, assembled from
two separate coordinate minima that need not be attained at a common box.
The main result of this subsection is that it is in fact a box of
$\tree$---indeed its unique minimal box in the coordinatewise order.

\begin{theorem}[The cusp is the minimal box of the tree]
	\label{thm:cusp-in-tree}
	For every IYT $T$ and every tree $\tree$ of $\jdtforest(T)$, the cusp
	$\cusp(\tree)$ belongs to $\tree$. It is the unique
	$\trianglelefteq$-minimal box of $\tree$; that is,
	$\cusp(\tree) \trianglelefteq (x,y)$ for every $(x,y) \in \tree$.
\end{theorem}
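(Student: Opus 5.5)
The plan is to show directly that the point $c := \cusp(\tree) = (x_{\min}, y_{\min})$ has a trajectory that meets $\tree$, and then to use \cref{lem:trajectories-merge}. Fix a leftmost box $a = (x_{\min}, y_a) \in \tree$ and a lowest box $b = (x_b, y_{\min}) \in \tree$. Then $y_a \ge y_{\min}$ and $x_b \ge x_{\min}$. By \cref{lem:trajectories-merge} the trajectories $\jdtpath^T_a$ and $\jdtpath^T_b$ eventually coincide. Every trajectory is an infinite lattice path that moves by unit steps east or north. The same lemma says that if $\jdtpath^T_c$ shares a single box with either of these two trajectories, then $c$ lies in $\tree$.

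\emph{Separation step.} Suppose, for contradiction, that $\jdtpath^T_c$ is disjoint from $\jdtpath^T_a \cup \jdtpath^T_b$. I will compare $\jdtpath^T_c$ with $\jdtpath^T_a$ column by column. Both paths start in column $x_{\min}$, and $c$ lies weakly below $a$; disjointness gives strictly below. I then claim that in every column both paths visit, every box of $\jdtpath^T_c$ is strictly below every box of $\jdtpath^T_a$.

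The argument is an induction on the column. Consider the last box of each path in the current column, and follow the east step taken from each.
\begin{itemize}
\item Since $\jdtpath^T_c$ is strictly below, it must leave the column before reaching the height at which $\jdtpath^T_a$ enters the next column. Otherwise, moving north one step at a time, it would land on a box of $\jdtpath^T_a$.
\item Hence its entry height into the next column is strictly smaller.
\end{itemize}
One must also rule out the possibility that $\jdtpath^T_a$ never leaves some column while $\jdtpath^T_c$ does, or vice versa. If $\jdtpath^T_c$ stays forever in a column that $\jdtpath^T_a$ also occupies, then $\jdtpath^T_c$ climbs through every box above it in that column and so hits $\jdtpath^T_a$. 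If instead $\jdtpath^T_a$ stays forever in a column, then $\jdtpath^T_c$ is simply confined to the region below and to the right, which is harmless.

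Symmetrically, with rows in place of columns, $\jdtpath^T_c$ stays strictly to the left of $\jdtpath^T_b$ in every row that both paths visit.

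\emph{Contradiction at the merge point.} Let $m$ be the first common box of $\jdtpath^T_a$ and $\jdtpath^T_b$; beyond $m$ they form a single path $\pi$. The previous step says that $\jdtpath^T_c$ is strictly below $\pi$ columnwise and strictly left of $\pi$ rowwise. Now take the first box of $\jdtpath^T_c$ lying weakly northeast of $m$ in one of the two coordinates, and compare it with the box of $\pi$ in the same column or row. This yields a box that is both strictly below and strictly left of a box of the same monotone path $\pi$. For a monotone path this is only possible if $\jdtpath^T_c$ crosses $\pi$, and a crossing forces a shared box because both paths move by unit steps. This contradicts disjointness, so $c \in \tree$.

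\emph{Minimality and main obstacle.} Minimality is immediate: every $(x,y)\in\tree$ satisfies $x \ge x_{\min}$ and $y \ge y_{\min}$, that is $c \trianglelefteq (x,y)$. Uniqueness follows because $\trianglelefteq$ is antisymmetric. The main obstacle is making the discrete "no crossing without a shared box" argument airtight. This includes the degenerate cases where a path stays in one column or row forever, and the final step where the below-constraint and the left-constraint meet. I would handle this with a single clean lemma: if two east/north lattice paths occupy positions at a common column with one strictly below the other, and later the lower one is weakly above the other in some common column, then they share a box.
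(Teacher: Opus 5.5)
Your separation step is sound. The column-by-column induction, with its degenerate cases, shows the following: if $\jdtpath^T_c$ avoids $\jdtpath^T_a\cup\jdtpath^T_b$, then in every column visited by both it lies strictly below $\jdtpath^T_a$, and in every row visited by both it lies strictly left of $\jdtpath^T_b$. This is already the non-crossing lemma you propose at the end, so that lemma is not what is missing.

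The step that fails is the last one, where you replace $\jdtpath^T_a$ and $\jdtpath^T_b$ by their merged tail $\pi$. The conditions ``strictly below $\pi$ in common columns'' and ``strictly left of $\pi$ in common rows'' are not contradictory. If $m=(3,3)$, a path running east forever along row $0$ satisfies both: it is below $\pi$ in every common column and shares no row with $\pi$. Similarly, suppose the first box of $\jdtpath^T_c$ with $x\ge x_m$ or $y\ge y_m$ is $(x_m,y)$ with $y<y_m$. Comparing it with $\pi$ only tells you it is below $\pi$ in column $x_m$, since $\pi$ has no box in row $y$. Moreover, a box lying strictly southwest of a box of a monotone path forces no crossing at all, so your claimed contradiction does not follow.

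The missing information is carried by the segments of $\jdtpath^T_a$ and $\jdtpath^T_b$ \emph{before} $m$. The first meets every column $x$ with $x_{\min}\le x<x_m$ at a height $\le y_m$. The second meets every row $y$ with $y_{\min}\le y<y_m$ at an abscissa $\le x_m$. If $x_{\min}=x_m$ then $b=c$, and if $y_{\min}=y_m$ then $a=c$. Otherwise $c$ lies in $[x_{\min},x_m-1]\times[y_{\min},y_m-1]$. Since $x+y$ grows by one per step, $\jdtpath^T_c$ must leave this set, and it first does so at one of two kinds of box:
\begin{itemize}
\item at $(x_m,y)$ with $y_{\min}\le y<y_m$, contradicting ``strictly left of $\jdtpath^T_b$'' in row $y$;
\item at $(x,y_m)$ with $x_{\min}\le x<x_m$, contradicting ``strictly below $\jdtpath^T_a$'' in column $x$.
\end{itemize}

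With this repair your proof is complete, and it differs genuinely from the paper's. The paper uses lazy trajectories: the lowest and leftmost boxes sandwich $c$ in the order $\preceq$ at time $0$, \cref{lem:order-preservation} keeps the sandwich in place, and it collapses once the two outer trajectories merge. Your planar argument never uses entries or convex corners. It therefore proves the statement for any graph on $\N_0^2$ in which each box has a single outgoing edge to its east or north neighbour. The paper's argument is shorter because it reuses order-preservation machinery it needs later anyway.
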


\begin{proof}
	Let $\ell = (x_{\min}, y_\ell)$ be a leftmost box, $m = (x_m,
	y_{\min})$ a lowest box, and $c = (x_{\min}, y_{\min}) = \cusp(\tree)$.
	Directly from the definition of $\preceq$,
	\[
		m \preceq c \preceq \ell ,
	\]
	since $x_m \geq x_{\min}$ and $y_{\min} \leq y_\ell$, the remaining
	coordinates coinciding.

	Consider the lazy trajectories $\jdtpath^T_m$, $\jdtpath^T_c$,
	$\jdtpath^T_\ell$. All tableau entries are positive, so at time
	$t = 0$ each rests at its starting box; hence $\jdtpath^T_m[0] = m
	\preceq c = \jdtpath^T_c[0] \preceq \ell = \jdtpath^T_\ell[0]$. By
	\cref{lem:order-preservation} (applied to each of the two pairs),
	\begin{equation}
		\label{eq:cusp-chain}
		\jdtpath^T_m[t] \preceq \jdtpath^T_c[t] \preceq \jdtpath^T_\ell[t]
		\qquad \text{for all } t \geq 0 .
	\end{equation}

	\medskip
	The boxes $m$ and $\ell$ both lie in $\tree$. Two boxes of the same
	tree have trajectories that eventually merge (by
	\cref{lem:trajectories-merge}),
	so
	$\jdtpath^T_m[t] = \jdtpath^T_\ell[t]$ for all sufficiently large
	$t$. For such $t$, \eqref{eq:cusp-chain} squeezes the middle term,
	$\jdtpath^T_m[t] \preceq \jdtpath^T_c[t] \preceq \jdtpath^T_m[t]$,
	whence $\jdtpath^T_c[t] = \jdtpath^T_m[t]$ by antisymmetry of
	$\preceq$. Thus the trajectory of $c$ eventually coincides with that
	of $m$, and in particular meets $\tree$. Since the trajectory from
	any box follows the directed edges of $\jdtforest(T)$ and therefore
	remains within the tree containing that box, we conclude $c \in
	\tree$.

	\medskip
	Finally $c = (x_{\min}, y_{\min}) \trianglelefteq (x,y)$ for every
	$(x,y) \in \tree$ by minimality of $x_{\min}$ and $y_{\min}$, so $c$
	is the unique $\trianglelefteq$-minimal box of $\tree$.
\end{proof}

Since the entries of $T$ increase along rows and columns,
they are monotone with respect to the coordinatewise order
$\trianglelefteq$. \Cref{thm:cusp-in-tree} therefore has the
following immediate consequence.

\begin{corollary}[The cusp realizes the minimal entry]
	\label{cor:cusp-min-entry}
	For every tree $\tree$ of $\jdtforest(T)$, the smallest
	entry occurring in $\tree$,
	\[
		\min \tree := \min \big\{ T_{x,y} : (x,y) \in \tree \big\},
	\]
	is attained at the cusp: $\min \tree = T_{\cusp(\tree)}$.
\end{corollary}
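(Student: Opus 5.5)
The plan is to read the corollary off \cref{thm:cusp-in-tree}, combined with the monotonicity of the entries of $T$ under the coordinatewise order $\trianglelefteq$. No new combinatorics is needed.

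First, I will check that $\min \tree$ is well defined. The set $\{T_{x,y} : (x,y)\in\tree\}$ is a nonempty subset of the positive integers, so it has a minimum.

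Second, I will establish monotonicity: if $(x_1,y_1)\trianglelefteq(x_2,y_2)$, then $T_{x_1,y_1}\le T_{x_2,y_2}$. To see this, go from $(x_1,y_1)$ to $(x_2,y_1)$ along row $y_1$, and then to $(x_2,y_2)$ along column $x_2$. Entries strictly increase along rows and columns, so they weakly increase along this composite path, with equality only when the two boxes coincide.

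Third, I will apply \cref{thm:cusp-in-tree}. It gives $\cusp(\tree)\in\tree$, so $T_{\cusp(\tree)}$ is one of the entries over which the minimum is taken; hence $\min\tree\le T_{\cusp(\tree)}$. It also gives $\cusp(\tree)\trianglelefteq(x,y)$ for every $(x,y)\in\tree$, so by monotonicity $T_{\cusp(\tree)}\le T_{x,y}$ for all boxes of $\tree$; hence $T_{\cusp(\tree)}\le\min\tree$. The two inequalities give equality. Strict monotonicity further shows the minimum is attained only at the cusp, since the entries of an IYT are distinct.

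I do not expect a real obstacle. The one point to be careful about is that the cusp is a priori only a point of $\N_0^2$ assembled from two separate coordinate minima; without membership in $\tree$ one would get only the lower bound. That difficulty is exactly what \cref{thm:cusp-in-tree} resolves, so invoking it in full is the crux of the argument.
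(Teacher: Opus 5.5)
Your proposal is correct and matches the paper's argument: the paper derives this corollary immediately from \cref{thm:cusp-in-tree} (the cusp lies in $\tree$ and is its $\trianglelefteq$-minimal box) together with the monotonicity of the entries of $T$ under $\trianglelefteq$. You spell out the monotonicity and the two inequalities explicitly, but the route is the same.
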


\section{Evolution of forests under jeu de taquin dynamics}
\label{sec:proofs-general}

In this section we bring in the jeu de taquin map $J$ and analyse how the forest
$\jdtforest(T)$ evolves under it. As in the previous section, $T$ still refers
to an arbitrary fixed (deterministic) IYT.

\subsection{Evolution under a single iteration of $J$}
	
When the JDT map $J$ is applied to $T$, the output tableau
$J(T)$ has its own jeu de taquin forest structure
$\jdtforest(J(T))$. Our first result sheds useful light on
the relationship between the two forests $\jdtforest(T)$ and
$\jdtforest(J(T))$. Here, we distinguish between the
\emph{tree at the origin}, which is the unique tree in
$\jdtforest(T)$ containing the origin box $(0,0)$, and all
other trees in the forest, which we refer to as
\emph{peripheral trees}. \Cref{fig:jdt-slide} illustrates a
single application of $J$ and its effect on the forest.

\begin{figure}[t]
	\centering
	\resizebox{0.48\textwidth}{!}{%
		\subfile{figures/jdt-slide-09x09-after.tex}}

	\caption{After applying the sliding part of the jeu de taquin map
		$J$ to the tableau of \cref{fig:jdt-forest}, before decreasing
		all entries by $1$, the result is the tableau shown above.}
	\label{fig:jdt-slide}
\end{figure}

\begin{proposition}[Stability of peripheral trees]
	\label{thm:jdt-forest-evolution-weak}
	Let $T$ be an IYT.
	There exists a canonical injection
	\begin{equation}  \label{eq:def-iota}
	    \iota\colon \big\{\text{peripheral trees in }\jdtforest(T)\big\} 
	\to \big\{\text{all trees in }\jdtforest(\jdt(T))\big\}
	\end{equation}
	uniquely characterized by the following property: for each 
	peripheral tree $\tree$ in $\jdtforest(T)$, $\tree$ is 
	a subtree of its image $\iota(\tree)$ under the natural 
	inclusion of graphs.
\end{proposition}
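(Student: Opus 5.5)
The plan is to show that every directed edge of a peripheral tree of $\jdtforest(T)$ is still an edge of $\jdtforest(\jdt(T))$. Then each peripheral tree $\tree$ lies inside a single tree of the new forest, and we define $\iota(\tree)$ to be that tree. Uniqueness of $\iota$ is automatic, because the trees of $\jdtforest(\jdt(T))$ are pairwise disjoint, so at most one of them can contain $\tree$. Injectivity will follow from the confluence criterion of \cref{lem:trajectories-merge}.

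\emph{Step 1 (entries of $\jdt(T)$).} Let $(x_n,y_n)_{n\ge0}$ be the jeu de taquin path of $T$, as in \eqref{eq:jdt-path}. Reading off the sliding description, the entries of $J(T)$ are:
\begin{itemize}
\item $\jdt(T)_{x_{n-1},y_{n-1}} = T_{x_n,y_n}-1$ for every box $(x_{n-1},y_{n-1})$ on the path;
\item $\jdt(T)_b = T_b-1$ for every box $b$ off the path.
\end{itemize}
The uniform shift by $-1$ does not affect any comparison, so we may ignore it. Hence entries off the path are unchanged, and entries on the path strictly increase, since $T_{x_n,y_n}>T_{x_{n-1},y_{n-1}}$. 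The path is precisely the trajectory $\jdtpath^T_{0,0}$, so it is contained in the tree at the origin and is disjoint from every peripheral tree.

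\emph{Step 2 (edges of peripheral trees persist).} Let $b$ be a box of a peripheral tree $\tree$. Let $c$ be its out-neighbour in $\jdtforest(T)$, and let $d$ be the other candidate neighbour among $b+(1,0)$ and $b+(0,1)$, so that $T_c<T_d$.
\begin{itemize}
\item Since $c\in\tree$, the box $c$ is not on the path, so $\jdt(T)_c=T_c-1$.
\item The box $d$ may or may not lie on the path. Either way $\jdt(T)_d\ge T_d-1$, because path entries only increase.
\end{itemize}
Hence $\jdt(T)_c<\jdt(T)_d$, and the edge $b\to c$ is present in $\jdtforest(\jdt(T))$. The point I would flag is that an edge \emph{can} flip when its target lies on the path. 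This is exactly why the argument applies only to peripheral boxes: such a box never points into the origin tree. Consequently $\tree$, being connected, is a subgraph of a unique tree $\iota(\tree)$.

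\emph{Step 3 (injectivity).} This is the step needing the most care. By Step 2, for any $b\in\tree$ the trajectory $\jdtpath^{\jdt(T)}_b$ follows the same edges as $\jdtpath^T_b$ at every box, since all of its boxes stay inside $\tree$. By induction along the path, the two trajectories are equal as paths.

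Now suppose $\tree\neq\tree'$ are peripheral trees, with $b\in\tree$ and $b'\in\tree'$. In $T$ the trajectories of $b$ and $b'$ never meet, since they lie in the disjoint sets $\tree$ and $\tree'$. The trajectories of $b$ and $b'$ in $\jdt(T)$ are literally the same paths, so they are disjoint and never coincide. By \cref{lem:trajectories-merge}, $b$ and $b'$ lie in different trees of $\jdtforest(\jdt(T))$, so $\iota(\tree)\ne\iota(\tree')$. The canonicity of $\iota$ follows from its characterisation by the subtree property.
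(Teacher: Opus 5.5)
Your proposal is correct and follows essentially the same route as the paper. Edges out of peripheral boxes persist because their targets lie off the jeu de taquin path, whose entries can only increase under sliding; injectivity then comes from \cref{lem:trajectories-merge} applied to the unchanged trajectories, and uniqueness from the disjointness of trees. Your Step~2 simply replaces the paper's case analysis (edge pointing north versus east, and which neighbour lies on the path) with the single observation that the out-neighbour $c$ is off the path while the other neighbour's entry can only go up.
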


\begin{proof}
	The map $\jdt$ operates by sliding
	boxes along the jeu de taquin path of $T$
	starting at the origin. By definition of the jeu de taquin
	forest, this path lies entirely within the tree at the origin. 
	Therefore, only boxes in the tree at the origin are moved by 
	the transformation; all other boxes remain stationary.

	\medskip
	We now claim that for any box $(x,y)$ in a peripheral tree of $T$,
	its outgoing edge in the JDT forest structure remains unchanged 
	after the transformation $J$. Recall that the outgoing edge from 
	$(x,y)$ points to the smaller of its two neighbors: either to 
	the north $(x, y+1)$ or to the east $(x+1, y)$, whichever has 
	the smaller entry. If neither neighbor lies on the jeu de
	taquin path, then both entries remain unchanged (apart from a
	trivial global shift of values) and there is nothing to prove.
	We therefore consider the case where at least one of these neighbors
	lies on the jeu de taquin path.
	
	Suppose the outgoing edge from $(x,y)$ points to $(x,
y+1)$, the northern neighbor. This means $T_{x,y+1} <
T_{x+1,y}$. Since $(x,y)$ belongs to a peripheral tree
rather than the tree at the origin, neither $(x,y)$ nor its
northern neighbor $(x,y+1)$ can lie on the jeu de taquin
path. Therefore, the eastern neighbor $(x+1, y)$ must lie
on the jeu de taquin path, and its entry will be modified
by sliding part of the jeu de taquin operation. However,
jeu de taquin sliding replaces each entry along the path
with a strictly larger value. (This refers just to the
sliding step before the final step in the definition of the
map $J$ that consists of decrementing all the entries by 1,
which preserves all order relations.) Consequently, after
the transformation, the inequality $\jdt(T)_{x,y+1} <
\jdt(T)_{x+1,y}$ is preserved, and the outgoing edge from
$(x,y)$ continues to point north.
	
The case where the outgoing edge points east is symmetric.

\medskip Since the outgoing edges from all boxes in
peripheral trees of $\jdtforest(T)$ are preserved as edges
in $\jdtforest(\jdt(T))$, each peripheral tree $\tree$ in
$\jdtforest(T)$ embeds as a subtree of some (possibly
larger) tree in $\jdtforest(\jdt(T))$, which we denote
$\iota(\tree)$.

Moreover, we can identify the vertices of $\iota(\tree)$ as
consisting of the original boxes of $\tree$, together with
those boxes $(x,y)$ that belong to the tree at the origin in
$\jdtforest(T)$ and whose oriented paths in the new forest
$\jdtforest(\jdt(T))$ eventually reach a box from $\tree$
(and remain in $\tree$ thereafter).

The map $\iota$ is an injection. Indeed, let $\tree, \tree'$
be two distinct peripheral trees of $T$ and let $u \in \tree$
and $u' \in \tree'$ be boxes. Since the outgoing edges of the
boxes of peripheral trees are preserved, the trajectory of
$u$ in $\jdtforest(\jdt(T))$ stays within $\tree$, and the
trajectory of $u'$ stays within $\tree'$; the sets $\tree$
and $\tree'$ being disjoint, the two trajectories never meet,
so by \cref{lem:trajectories-merge} (applied to the tableau
$\jdt(T)$) the boxes $u$ and $u'$ lie in distinct trees of
$\jdtforest(\jdt(T))$. Hence
$\iota(\tree) \neq \iota(\tree')$.

We constructed the map $\iota$ and verified the properties
it was claimed to satisfy. It is also easy to confirm that
$\iota$ is the \emph{unique} map with these properties:
since each $\tree$ is a subtree of its image $\iota(\tree)$
under the natural inclusion of graphs, and the trees of
$\jdtforest(\jdt(T))$ are disjoint, $\iota(\tree)$ is in
fact the unique tree in $\jdtforest(\jdt(T))$ that contains
$\tree$ as a subtree, so this property indeed characterizes
$\iota(\tree)$ uniquely.
\end{proof}

\subsection{Finite lifetime of jeu de taquin trees}

We now consider an extended scenario in which the JDT map
$J$ is applied iteratively. $T$ will again denote a fixed
deterministic IYT.
For each positive integer $n$, we define
\[ T_n = \jdt^{n-1}(T), \]
the tableau obtained by applying the jeu de taquin transformation
$n-1$ times to $T$. Thus $T_1 = T$, $T_2 = \jdt(T)$,
$T_3 = \jdt^2(T)$, and so forth. Each of the tableaux $T_n$ has its
own forest structure $\jdtforest(T_n)$, tree at the origin, and set
of peripheral trees. For each $n \geq 1$, denote the tree
at the origin of $\jdtforest(T_n)$ by $\origin_n$. 
For each $n\ge1$, denote by $\gamma_n$ the jeu de taquin path associated with
$T_n$.

The injection $\iota$ defined in \cref{thm:jdt-forest-evolution-weak} allows us
to track
peripheral trees through successive applications of the jeu de
taquin transformation. 

\begin{proposition}[Every tree is eventually consumed]
	\label{prop:finite-time-origin} For any tree $\tree$ in
	$\jdtforest(T)$, there exists a finite number $n = n(\tree)
	\in \{1,2,\ldots\}$ such that after $n-1$ applications of
	the jeu de taquin transformation, the tree $\tree$ (tracked
	through the injections from
	\cref{thm:jdt-forest-evolution-weak}) becomes $\origin_n$,
	the tree at the origin in $T_n$. The number $n(\tree)$
	satisfies the upper bound
	\begin{equation}  \label{eq:tree-lifetime-upperbound}
	n(\tree) \le \min_{(x,y) \in \tree} \Big( T_{x,y} - x - y - xy \Big).
	\end{equation}
\end{proposition}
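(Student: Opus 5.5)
The plan is to follow a single fixed box $b=(x,y)\in\tree$ through the iterations and to use a lower bound on the entry that any box $(x,y)$ can carry in an IYT. We first track $\tree$ through the injections of \cref{thm:jdt-forest-evolution-weak}. As long as the tracked tree is peripheral in $T_1,\dots,T_k$, we obtain a chain $\tree=\tree^{(1)}\subseteq\tree^{(2)}\subseteq\cdots\subseteq\tree^{(k)}$ with $\tree^{(j+1)}=\iota(\tree^{(j)})$. Here $\tree^{(j)}$ is a peripheral tree of $\jdtforest(T_j)$, and it contains $\tree$ as a subgraph, so in particular it contains $b$. We define $n(\tree)$ as the first index at which the tracked tree is $\origin_n$. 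The proposition then amounts to showing that the tracked tree cannot remain peripheral for more than $B-1$ consecutive steps, where $B:=T_{x,y}-x-y-xy$.

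The first key step is that the entry at $b$ drops by exactly one per step while the tree stays peripheral. If $\tree^{(j)}$ is peripheral in $T_j$, then $b$ does not lie on the jeu de taquin path $\gamma_j$, because that path is contained in $\origin_j$. By the proof of \cref{thm:jdt-forest-evolution-weak}, boxes off the path are not moved by the sliding, and their entries are then only decremented by $1$. Therefore, if the tree is peripheral in $T_1,\dots,T_k$, the entry of $T_k$ at $b$ equals $T_{x,y}-(k-1)$.

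The second key step is a lower bound for entries of an IYT $S$ at a box $b=(x,y)$. The whole rectangle $R=\{0,\dots,x\}\times\{0,\dots,y\}$ consists of boxes $\trianglelefteq b$, so all of its entries are at most $S_{x,y}$. Since the entries are distinct positive integers, this gives $S_{x,y}\ge (x+1)(y+1)$. We then sharpen this in the peripheral case: if $S_{x,y}=(x+1)(y+1)$, then $\lambda^{(S_{x,y})}=R$, and I claim that $b$ lies on the jeu de taquin path of $S$.

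I expect this sharpening to be the only delicate point, since it is exactly what removes the off-by-one. To prove the claim, start at the origin inside $R$. At any box of $R$ other than $b$, at least one of the two forward neighbours lies in $R$ and therefore has entry $\le S_{x,y}$. Every box outside $R$ has entry $>S_{x,y}$. Hence the path always steps to a neighbour inside $R$, and it can leave $R$ only from $b$; so it passes through $b$. Consequently, if $b$ lies in a peripheral tree of $S$, we get the strict inequality $S_{x,y}\ge (x+1)(y+1)+1$.

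To finish, combine the two steps. If the tracked tree is peripheral in $T_1,\dots,T_k$, then applying the strict bound to $T_k$ gives $T_{x,y}-(k-1)\ge xy+x+y+2$, that is, $k\le B-1$. So the tracked tree cannot be peripheral at every index $k=1,\dots,B$. Hence the first index $n$ at which it equals $\origin_n$ exists and satisfies $n\le B$; this also covers the degenerate case $n=1$. Finally, since $b\in\tree$ was arbitrary, we take the minimum over all boxes of $\tree$, which yields \eqref{eq:tree-lifetime-upperbound}.
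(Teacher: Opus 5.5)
Your proposal is correct and follows essentially the paper's argument. You fix a box of $\tree$, note that its entry drops by exactly one per iteration while the tracked tree stays peripheral, and use the bound $T_{x,y}\ge(x+1)(y+1)$, where equality forces the box onto the jeu de taquin path. The only difference is that you justify the equality case directly (the path is trapped in the rectangle $R$ until it exits through $b$), whereas the paper argues by contradiction that otherwise $J(T)_{x,y}=(x+1)(y+1)-1$ would violate the same lower bound applied to $J(T)$.
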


We call $n(\tree)$ the \emph{lifetime} of the tree $\tree$.
This number is the last value $n$ for which $\tree$ can be
tracked using the injection maps $\iota$ over $n-1$
iterations of the map $J$ to be naturally associated with a
tree of $T_n$. After one additional iteration of the jeu de
taquin map applied to $T_n$, the tree is consumed.

\begin{proof}
Let $(x, y)$ be a box of $\tree$. The entry $T_{x, y}$ must satisfy
\begin{equation}
\label{eq:Txy-lower-bound}
T_{x, y} \ge (x+1)(y+1),
\end{equation}
since $T_{0,0}=1$ and the entries of $T$ are monotonically
increasing along rows and columns. If there is equality in
\eqref{eq:Txy-lower-bound}, then $(x,y)$ must lie on the jeu
de taquin path $\gamma_1$ of $T$, since otherwise, after
applying the jeu de taquin map, the tableau $J(T)$ will have
entry $J(T)_{x,y} = (x+1)(y+1)-1$ at $(x,y)$, which violates
the inequality \eqref{eq:Txy-lower-bound} with respect to
$J(T)$. Since $\gamma_1$ is contained in $\origin_1$, this
means that $n(\tree) = 1$.

By similar reasoning, in the general case where
\eqref{eq:Txy-lower-bound} may be a strict inequality, since
each successive application of the jeu de taquin map in
which $(x,y)$ does not lie on the jeu de taquin path has the
effect of decreasing the entry at $(x,y)$ by $1$, it follows
that after some number $n-1$ of successive applications of
the jeu de taquin map that is at most $T_{x,y}- (x+1)(y+1)$,
we will arrive at a tableau in which $(x,y)$ lies on the
jeu de taquin path $\gamma_n$. This means that the tree
$\tree$, tracked through the injections from
\cref{thm:jdt-forest-evolution-weak}, has become
$\origin_n$, the tree at the origin in $T_n$. This
establishes that the tree lifetime $n(\tree)$ is
well-defined, finite, and satisfies the upper bound
$n(\tree) \le T_{x,y}-(x+1)(y+1) + 1 = T_{x,y}-x-y-xy$.
Since this bound holds for \emph{any} box $(x,y)$ in
$\tree$, taking the minimum over all boxes gives
\eqref{eq:tree-lifetime-upperbound}.
\end{proof}

\subsection{The maximal extension of a tree}

Another useful notion is the \emph{maximal extension} of a
tree $\tree$. This object captures the full extent of
$\tree$ by following it forward through the sequence of JDT
map iterations until it arrives at the origin.

\begin{definition}[Maximal extension]
	\label{def:tree-max}
	Given an IYT $T$ and a tree $\tree$ in $\jdtforest(T)$, the
	\emph{maximal extension} 
	of $\tree$ is defined by
	\[ \tree_{\max} = \origin_n, \]
	where as before $n=n(\tree)$ is the lifetime of $\tree$ and
	$\origin_n$ is the tree at the origin in $\jdtforest(T_n)$.
\end{definition}

As we track $\tree$ 
through the sequence of injections $\iota$ as it evolves under 
iterations of $J$, ultimately becoming the tree at the 
origin in $\jdtforest(T_n)$, each step only makes the tree grow, 
so in particular we have $\tree \subseteq 
\tree_{\max}$. The maximal extension can thus be viewed as the closure 
of $\tree$ with respect to jeu de taquin dynamics: it represents 
the maximal set of boxes that will eventually be absorbed into 
$\tree$ as it evolves toward the origin.

\begin{remark}[Maximal extension in transformed tableaux]
	\label{rem:lifetime-after-jdt} 
	If $\tree$ is a tree in the
	jeu de taquin forest of the transformed tableau $T_k =
	\jdt^k(T)$ for some $k \geq 1$, let $n(\tree)$ denote the
	lifetime of $\tree$ relative to $T_k$, and let
	$\tree_{\max}$ denote its maximal extension. Then
	$\tree_{\max} = \origin_m$ where $m = n(\tree) + k\geq k$.
\end{remark}

\subsection{Absorption of boxes into peripheral trees}

The following lemma locates the boxes absorbed from the tree at
the origin into peripheral trees during a single jeu de taquin
transformation (cf.\ \cref{fig:jdt-slide}).

\begin{lemma}[Absorbed boxes]
	\label{prop:absorbed-boxes}
	Let $T$ be an IYT. For each 
	peripheral tree $\tree$ in~$T$, the set of boxes absorbed 
	during the first jeu de taquin transformation
	\[ \iota(\tree) \setminus \tree \]
	is contained in $\origin_1 \cap \origin_n$ for some 
	$n \geq 2$.
\end{lemma}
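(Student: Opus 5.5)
The plan is to prove the two containments separately, taking $n = n(\tree)$, the lifetime of $\tree$ from \cref{prop:finite-time-origin}. The containment in $\origin_1$ follows from the partition of $\N_0^2$ into trees of $\jdtforest(T)$ together with the injectivity of $\iota$. The containment in $\origin_n$ follows from the monotone growth of a tracked tree up to its maximal extension (\cref{def:tree-max}).

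\emph{Step 1: $\iota(\tree)\setminus\tree \subseteq \origin_1$.} Let $b \in \iota(\tree)\setminus\tree$. The trees of $\jdtforest(T)$ partition $\N_0^2$, so $b$ lies either in $\origin_1$ or in some peripheral tree $\tree'$.
\begin{itemize}
\item $b \notin \tree$ by assumption, so $\tree' \neq \tree$.
\item If $b \in \tree'$ with $\tree' \neq \tree$, then by \cref{thm:jdt-forest-evolution-weak} we have $b \in \tree' \subseteq \iota(\tree')$.
\item Since $\iota$ is injective, $\iota(\tree') \neq \iota(\tree)$. Distinct trees of $\jdtforest(\jdt(T))$ are disjoint, so $b \notin \iota(\tree)$, a contradiction.
\end{itemize}
Hence $b \in \origin_1$. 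This is also the description of the vertices of $\iota(\tree)$ already noted in the proof of \cref{thm:jdt-forest-evolution-weak}.

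\emph{Step 2: $n(\tree) \ge 2$.} If $n(\tree) = 1$, then after zero applications of $\jdt$ the tree $\tree$ is $\origin_1$. This contradicts $\tree$ being peripheral. So $n := n(\tree) \geq 2$.

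\emph{Step 3: $\iota(\tree) \subseteq \origin_n$.} Track $\tree$ forward, setting $\tree^{(1)} = \tree$ and $\tree^{(k+1)} = \iota(\tree^{(k)})$ for $k < n$. Here $\tree^{(k)}$ is a peripheral tree of $T_k$ for every $k < n$, so $\iota$ applies at each step. By \cref{thm:jdt-forest-evolution-weak}, each $\tree^{(k)}$ is a subtree of $\tree^{(k+1)}$ as a set of box positions, because $\iota$ is defined through the natural inclusion of graphs on the common vertex set $\N_0^2$. Therefore
\[
\tree^{(2)} \subseteq \tree^{(3)} \subseteq \cdots \subseteq \tree^{(n)} = \origin_n = \tree_{\max}.
\]
Since $n \ge 2$, the chain contains $\tree^{(2)} = \iota(\tree)$. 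Hence $\iota(\tree)\setminus\tree \subseteq \iota(\tree) \subseteq \origin_n$. Combined with Step 1, this proves the lemma.

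The only delicate point is bookkeeping: confirming that "tracking through $\iota$" identifies boxes by their fixed positions in $\N_0^2$, and not by their entries (which slide). Once that is checked, the set inclusions along the chain are literal, and Step 1 reduces to the disjointness and injectivity facts above.
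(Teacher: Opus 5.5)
Your proof is correct and follows essentially the paper's route. You show $\iota(\tree)\setminus\tree\subseteq\origin_1$ because absorbed boxes can only come from the tree at the origin, and $\iota(\tree)\subseteq\origin_n$ by monotone growth up to the maximal extension $\origin_{n}$ with $n=n(\tree)\ge 2$. Your Step~1 spells out, via disjointness of trees and injectivity of $\iota$, what the paper reads off from the description of the vertices of $\iota(\tree)$ in the proof of \cref{thm:jdt-forest-evolution-weak}, and your Step~3 unpacks the paper's appeal to \cref{rem:lifetime-after-jdt}.
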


\begin{proof}
	Let $\tree$ be a peripheral tree in $T$. The boxes in 
	$\iota(\tree) \setminus \tree$ are precisely those absorbed 
	by $\tree$ during the first jeu de taquin transformation. 
	By the explicit description of the boxes of $\iota(\tree)$ 
	in the proof of \cref{thm:jdt-forest-evolution-weak}, these boxes must 
	originate from the tree at the origin in $T$. Therefore,
	\begin{equation}  \label{eq:iotaTminusT-subsetO1}
	    \iota(\tree) \setminus \tree \subseteq \origin_1.
	\end{equation}
	
	On the other hand, after the first transformation, $\iota(\tree)$ 
	is a tree in $T_2 = \jdt(T)$. By \cref{rem:lifetime-after-jdt}, 
	the maximal extension of $\iota(\tree)$ is the tree at the 
	origin in some tableau $T_n$ where $n \geq 2$. That is,
	\[ [\iota(\tree)]_{\max} = \origin_n \]
	for some $n \geq 2$. Since $\iota(\tree) \subseteq 
	[\iota(\tree)]_{\max}$, we have
	\begin{equation} \label{eq:iotaTminusT-subsetOn}
	\iota(\tree) \setminus \tree \subseteq \iota(\tree) 
	\subseteq \origin_n.
	\end{equation}
Combining \eqref{eq:iotaTminusT-subsetO1} and
\eqref{eq:iotaTminusT-subsetOn} gives the claim.
\end{proof}

\section{Geometric notions for infinite Young tableaux} 
\label{sec:inverse-rsk-details}

We now develop some useful geometric notions that will
feature in our analysis of Plancherel-random IYTs in
\cref{sec:proofs-plancherel-normal}.

\subsection{The LSVK limit shape and RSK-polar coordinates}

\label{sec:LSVK-curve}

The map $F(\cdot)$ defined in \eqref{eq:def-F-semicircle} is
closely related to the famous curve established by
Logan--Shepp \cite{LoganShepp1977} and, independently,
Vershik--Kerov \cite{VershikKerov1977}, as the limit shape
of random Young diagrams under the Plancherel measure.
Consider the random Young diagram formed by the boxes
labeled $1, 2, \ldots, n$ in a Plancherel-distributed random
infinite tableau. (That is, if one thinks in terms of the
RSK map, this is the Young diagram $\lambda^{(n)}$ described
in \cref{sec:intro}). After rescaling by $\sqrt{n}$, the
boundary of these diagrams converges almost surely to a
deterministic curve as $n \to \infty$.
This limiting curve is the
\emph{Logan--Shepp--Vershik--Kerov (LSVK) limit shape}.

The LSVK curve is naturally expressed in the \emph{Russian
	coordinates} $(u,v)$,  \cite[p.~35]{Romik2015} related to
the $(x,y)$ coordinates (sometimes called \emph{French
	coordinates} in this context) by
\[
u = x - y, \qquad v = x + y.
\]
In those coordinates, the LSVK curve takes the explicit 
form
\[
v = \Omega(u) = \frac{2}{\pi} \left( u \sin^{-1}\left(\frac{u}{2}
\right) + \sqrt{4 - u^2} \right), \quad |u| \leq 2.
\]

The LSVK curve can be parametrized in a way that reflects its connection to 
the RSK correspondence by making use of the 
cumulative distribution function $F(\cdot)$ from \eqref{eq:def-F-semicircle}.
For $z \in [0,1]$, define 
\begin{equation}
\label{eq:oneminusz-quantile}
\scq_z = F^{-1}(1-z) 
\in [-2,2],
\end{equation}
the $(1-z)$-quantile of the semicircle distribution on $[-2,2]$. We introduce
the \emph{RSK 
	trigonometric functions}:
\begin{align*}
	\RSKcos z &= \frac{\Omega(\scq_{z}) + \scq_{z} }{2}, 
	\\[0.5em]
	\RSKsin z &= \frac{\Omega(\scq_{z}) - \scq_{z}}{2}. 
\end{align*}
As is immediate to see from the definitions, the map 
\begin{equation}
	\label{eq:LSVK-parametrization}
	z \mapsto \bigl( \operatorname{RSKcos} z, 
	\operatorname{RSKsin} z \bigr) \qquad (z \in [0,1])
\end{equation}
is a parametrization of the LSVK curve, tracing it in the
French coordinates from $(2,0)$ at $z=0$ to $(0,2)$ at
$z=1$.

It is helpful to generalize this parametrization to a
two-dimensional curvilinear coordinate system on the
positive quadrant, which we call the \emph{RSK-polar
	coordinates}, and denote by~$(\rho, \alpha)$. These
coordinates relate to the $(u,v)$ and $(x,y)$ coordinate
systems by
\begin{equation}
\Big\{
\begin{array}{rcl}
x &=& \rho \RSKcos \alpha,
\\
y &=& \rho \RSKsin \alpha,
\end{array}
\quad \textrm{or, equivalently,} \quad
\Big\{
\begin{array}{rcl}
u &=& \rho \, \scq_\alpha,
\\
v &=& \rho \, \Omega(\scq_\alpha).
\end{array}
\label{eq:def-rsk-polar}
\end{equation}
We call $\rho$ the \emph{RSK-norm} and $\alpha$ the
\emph{RSK-angle} of $(x,y)$, respectively, and denote
\[
\rho = \rho(x,y), \qquad \alpha = \alpha(x, y).
\]
It is easy to see that the
relations~\eqref{eq:def-rsk-polar} associate to each $(x,y)$
in the positive quadrant, except for the origin, a unique
pair $(\rho, \alpha)$ with $\rho \ge 0$, $\alpha \in [0,1]$,
and $(\rho,\alpha) \neq (0,0)$. For the case $(x,y)=(0,0)$
we have $\rho=0$ but the $\alpha$ coordinate is undefined.

The next two lemmas are not used anywhere in the paper, but have not been
published before, and relate in an interesting way to existing ideas appearing
in the literature --- see
Remarks~\ref{remark:ode-kerov-characterization}~and~\ref{remark:folklore-characterization}
below --- so we think they are worth including here.

\begin{lemma}
The Jacobian $\frac{\partial(x,y)}{\partial(\rho,\alpha)}$
of the RSK-polar coordinate map $(\rho,\alpha) \mapsto
(x,y)$ is given by
\[
\frac{\partial(x,y)}{\partial(\rho,\alpha)} = 2 \rho.
\]
\end{lemma}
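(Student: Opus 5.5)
The plan is to factor the coordinate change $(\rho,\alpha)\mapsto(x,y)$ through the Russian coordinates and the quantile variable $\scq=\scq_\alpha$:
\[
(\rho,\alpha)\;\longmapsto\;(\rho,\scq_\alpha)\;\longmapsto\;(u,v)=(\rho\scq,\ \rho\,\Omega(\scq))\;\longmapsto\;(x,y)=\Bigl(\tfrac{u+v}{2},\tfrac{v-u}{2}\Bigr).
\]
By the chain rule, the Jacobian is the product of the three Jacobian determinants. I will compute them on the open region $0<\alpha<1$, i.e.\ $|\scq|<2$, where everything is smooth.

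\textbf{The two outer factors.}
\begin{enumerate}
\item The last map is linear. The matrix $\begin{pmatrix}1/2 & 1/2\\ -1/2 & 1/2\end{pmatrix}$ has determinant $\tfrac12$.
\item The first map only changes the second variable, so its determinant is $d\scq/d\alpha$. Differentiate the defining relation $F(\scq_\alpha)=1-\alpha$ from \eqref{eq:oneminusz-quantile}. Since $F'(s)=\frac{1}{2\pi}\sqrt{4-s^2}$ by \eqref{eq:def-F-semicircle}, this gives
\[
\frac{d\scq}{d\alpha}=-\frac{2\pi}{\sqrt{4-\scq^2}} .
\]
\end{enumerate}

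\textbf{The middle factor.} This step carries the real content. The determinant is
\[
\det\begin{pmatrix}\scq & \rho\\ \Omega(\scq) & \rho\,\Omega'(\scq)\end{pmatrix}
=\rho\bigl(\scq\,\Omega'(\scq)-\Omega(\scq)\bigr).
\]
Differentiating the explicit formula for $\Omega$, the derivative of $\sqrt{4-u^2}$ cancels against the derivative of $\sin^{-1}(u/2)$ in the product term. What remains is $\Omega'(u)=\frac{2}{\pi}\sin^{-1}(u/2)$. Hence
\[
\scq\,\Omega'(\scq)-\Omega(\scq)=-\frac{2}{\pi}\sqrt{4-\scq^2},
\]
so the middle determinant is $-\frac{2}{\pi}\rho\sqrt{4-\scq^2}$. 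This is the Legendre-transform identity $u\Omega'-\Omega$ for the LSVK curve. Checking this cancellation carefully is the main point to verify.

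\textbf{Assembling.} Multiplying the three factors gives
\[
\tfrac12\cdot\Bigl(-\frac{2}{\pi}\rho\sqrt{4-\scq^2}\Bigr)\cdot\Bigl(-\frac{2\pi}{\sqrt{4-\scq^2}}\Bigr)=2\rho .
\]
The factors $\sqrt{4-\scq^2}$ cancel, which is exactly why the semicircle quantile parametrization makes the Jacobian independent of $\alpha$. The two negative signs cancel as well, so the orientation is positive and no absolute value is needed.

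The only remaining care is at the boundary $\alpha\in\{0,1\}$ (where $|\scq|=2$) and at $\rho=0$. There the formula holds by continuity, or these points can simply be excluded as a null set.
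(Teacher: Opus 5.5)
Your proposal is correct and follows essentially the same route as the paper. Both are a chain-rule computation through the Russian coordinates $(u,v)$, driven by the identity $\Omega(u)-u\,\Omega'(u)=\frac{2}{\pi}\sqrt{4-u^2}$ together with $\frac{d\scq_\alpha}{d\alpha}=-\frac{2\pi}{\sqrt{4-\scq_\alpha^2}}$; the paper merely merges your first two factors into the single determinant $\frac{\partial(u,v)}{\partial(\rho,\alpha)}=4\rho$ and states the $\Omega$ identity without the short derivation you supply.
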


\begin{proof}
Observe that the function $\Omega(u)$ satisfies the relation
\begin{equation}
\label{eq:omega-ode}
\Omega(u) - u \Omega'(u) = \frac{2}{\pi} \sqrt{4-u^2},
\end{equation}
and that
$\frac{d \scq_\alpha}{d\alpha} = -\left(\frac{1}{2\pi} \sqrt{4-\scq_\alpha^2} \right)^{-1}$, 
by \eqref{eq:def-F-semicircle} and \eqref{eq:oneminusz-quantile}.
Therefore
\begin{align*}
\frac{\partial(u,v)}{\partial(\rho,\alpha)} &= \det 
\begin{pmatrix} 
\frac{\partial u}{\partial \rho} & \frac{\partial u}{\partial \alpha} \\[3pt]
\frac{\partial v}{\partial \rho} & \frac{\partial v}{\partial \alpha}
\end{pmatrix}
= \det \begin{pmatrix} 
\scq_\alpha & \phantom{\Omega'(\scq_\alpha)} \rho \frac{d \scq_\alpha}{d \alpha}
\\[3pt]
\Omega(\scq_\alpha) & \Omega'(\scq_\alpha) \, \rho
\frac{d \scq_\alpha}{d \alpha} 
\end{pmatrix}
= \rho \frac{d \scq_\alpha}{d \alpha} 
\det \begin{pmatrix} \scq_\alpha & 1 \\ \Omega(\scq_\alpha) &
\Omega'(\scq_\alpha) 
\end{pmatrix}
\\ &= \rho \frac{-2\pi}{ \sqrt{4-\scq_\alpha^2}} 
\left( \scq_\alpha \Omega'(\scq_\alpha) - \Omega(\scq_\alpha) \right) 
= 4\rho.
\end{align*}
It follows that $\frac{\partial(x,y)}{\partial(\rho,\alpha)} = 
\frac{\partial(x,y)}{\partial(u,v)} \cdot
\frac{\partial(u,v)}{\partial(\rho,\alpha)} =  
\frac12\cdot 4\rho = 2\rho$, as claimed.
\end{proof}

The parametrization \eqref{eq:LSVK-parametrization} has a natural geometric
interpretation, illustrated in~\cref{fig:lsvk-a},
and described in the following lemma.

\begin{figure}[t]
\begin{center}
\subfloat[]{%
	\label{fig:lsvk-a}%
	\scalebox{0.86}{\subfile{figures/FIGURE-curvilinear.tex}}%
}\hfill
\subfloat[]{%
	\label{fig:lsvk-b}%
	\scalebox{0.86}{\subfile{figures/FIGURE-rsk-polar.tex}}%
}
	
	\caption{\protect\subref{fig:lsvk-a} The
		Logan--Shepp--Vershik--Kerov (LSVK) limit shape
		(blue curve) in dual coordinate systems. Black axes: the French 
		coordinates $(x,y)$. Red axes: the Russian coordinates $(u,v)$ 
		with $u = x - y$, $v = x + y$. The curve is parametrized by 
		$(x,y)=(\operatorname{RSKcos} z, \operatorname{RSKsin} z)$ 
		for $z \in [0,1]$. The green shaded curvilinear triangle 
		has area equal to the parameter~$z$.
		\protect\subref{fig:lsvk-b} The RSK-polar coordinate system.}
	\label{fig:lsvk}
\end{center}
\end{figure}

\begin{lemma}
	\label{thm:curvilinear-triangle}
	For any $z \in [0,1]$, the area of the curvilinear 
	triangle $E(z)$ bounded by the $x$-axis, the ray from the origin 
	to $(\operatorname{RSKcos} z, \operatorname{RSKsin} z)$, 
	and the arc of the LSVK curve with parameters ranging in $[0,z]$,
	is equal to $z$.
\end{lemma}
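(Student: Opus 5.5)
The plan is to identify $E(z)$ as the image, under the RSK-polar coordinate map $(\rho,\alpha)\mapsto(x,y)$ of \eqref{eq:def-rsk-polar}, of the rectangle $[0,1]\times[0,z]$. Its area then follows from the Jacobian computed in the preceding lemma.

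First I identify the three boundary pieces in RSK-polar coordinates.
\begin{itemize}
\item For fixed $\alpha$, the set $\{\rho\,(\RSKcos\alpha,\RSKsin\alpha):\rho\ge0\}$ is the ray from the origin through the LSVK point with parameter $\alpha$. So the side ``ray from the origin to $(\RSKcos z,\RSKsin z)$'' is $\{\alpha=z,\ 0\le\rho\le1\}$.
\item At $\alpha=0$ we have $\scq_0=F^{-1}(1)=2$ and $\Omega(2)=2$. Hence $\RSKsin 0=0$, and $\{\alpha=0\}$ is the positive $x$-axis. The relevant piece of it is the segment from $(0,0)$ to $(2,0)$.
\item The LSVK arc with parameters in $[0,z]$ is $\{\rho=1,\ 0\le\alpha\le z\}$ by \eqref{eq:LSVK-parametrization}.
\end{itemize}
The text asserts that the RSK-polar map is injective away from $\rho=0$. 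It therefore maps the boundary of the rectangle $[0,1]\times[0,z]$ bijectively onto the boundary of $E(z)$, with the edge $\rho=0$ collapsing to the origin. Consequently $E(z)$ is exactly the image of the rectangle.

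Next, the change-of-variables formula together with the preceding lemma gives
\[
\operatorname{area}(E(z))=\int_0^z\!\!\int_0^1 2\rho\,d\rho\,d\alpha = z .
\]
The Jacobian $2\rho$ has constant sign for $\rho>0$, and the map is injective on the open rectangle, so no multiplicity or orientation issues arise.

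The main obstacle is justifying injectivity and that the image is exactly $E(z)$, which the text only calls ``easy to see''. To sidestep it, I would also give a direct check using the sector-area formula. For a star-shaped region swept by the ray from the origin, the area is $A(z)=\tfrac12\int_0^z (x\,y'-y\,x')\,d\alpha$ along the arc $(x,y)=(\RSKcos\alpha,\RSKsin\alpha)$. In Russian coordinates, $x\,y'-y\,x'=\tfrac12(u\,v'-v\,u')$, where $u=\scq_\alpha$ and $v=\Omega(\scq_\alpha)$. This yields
\[
u\,v'-v\,u'=\bigl(\scq_\alpha\Omega'(\scq_\alpha)-\Omega(\scq_\alpha)\bigr)\frac{d\scq_\alpha}{d\alpha}=4,
\]
by \eqref{eq:omega-ode} and the formula for $d\scq_\alpha/d\alpha$. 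Hence $A'(z)=1$, and with $A(0)=0$ this gives $A(z)=z$.

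The sector formula itself needs the polar angle of the arc point to be monotone in $\alpha$. This holds because $u/v=\scq_\alpha/\Omega(\scq_\alpha)$ is strictly monotone in $\alpha$: its derivative in $q$ is $(\Omega-q\Omega')/\Omega^2>0$ by \eqref{eq:omega-ode}, and $\scq_\alpha$ is strictly decreasing.
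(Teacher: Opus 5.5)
Your proposal is correct and follows the paper's proof: identify $E(z)$ with the image of the rectangle $[0,1]\times[0,z]$ under the RSK-polar map, then integrate the Jacobian $2\rho$. Your sector-area check is the same Jacobian computation with the $\rho$-integral done first, and it usefully supplies the monotonicity of the polar angle along the arc — hence the injectivity that the paper only calls ``easy to see.''
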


\begin{proof}
Observe that in RSK-polar coordinates the region $E(z)$ maps
to the rectangle $\{ (\rho,\alpha) \,:\, 0\le \rho\le 1, \
0\le \alpha\le z  \}$. It follows that
\begin{align*}
\operatorname{area}(E(z)) &= \iint_{E(z)} \,dx \,dy =
\int_0^z \int_0^1 \left| \frac{\partial(x,y)}{\partial(\rho,\alpha)} \right|
d\rho \, d\alpha
=
\int_0^z \int_0^1 2\rho \, d\rho \, d\alpha = z.
\qedhere
\end{align*}
\end{proof}

\begin{remark}
\label{remark:ode-kerov-characterization}
The relation~\eqref{eq:omega-ode} is equivalent to the
statement that the \emph{radial distribution} of the curve
$\Omega(u)$, as defined by Kerov \cite[Sec.~4.4]{Kerov1996},
is the semicircle distribution. This fact was used by Kerov
to characterize $\Omega(\cdot)$ as the unique continual
Young diagram shape whose radial distribution coincides with
another natural distribution associated with the shape,
known as its transition measure.
\end{remark}

\begin{remark}
\label{remark:folklore-characterization}
The geometric characterization in \cref{thm:curvilinear-triangle} has circulated
as 
folklore, but we have not found a written proof in the 
literature. Generalizations have been discussed in the context 
of \emph{``geographic coordinate systems on continual Young diagrams,''} 
also without proof \cite[Remark~2.1]{MaslankaSniady2022}. 
\end{remark}

\begin{remark}
Our parametrization \eqref{eq:LSVK-parametrization} of the LSVK
curve runs in the opposite direction to the one implicit in
\cite{RomikSniady2015}: here the parameter increases from $z=0$ at
the point $(2,0)$ on the $x$-axis to $z=1$ at the point $(0,2)$ on
the $y$-axis, whereas in \cite{RomikSniady2015} the corresponding
parameter traverses the curve in the reverse order. Concretely, we
set $\scq_z = F^{-1}(1-z)$ in \eqref{eq:oneminusz-quantile} rather
than $F^{-1}(z)$; the two conventions are thus related by the
substitution $z \mapsto 1-z$, the reflection of the parameter
interval $[0,1]$ about its midpoint.
\end{remark}

\subsection{Asymptotic directions of planar sets}

\label{subsec:asymptotic-direction}

\begin{definition}[Asymptotic direction]
	\label{def:asymptotic-direction}
	Let $A \subset \N_0^2$ be an infinite set of boxes in the positive
	quarter-plane.
	We say that $A$ has an \emph{asymptotic direction} if the limit
	\[
		\asymdirect(A) := \lim_{n\to\infty} \alpha(x_n, y_n)
	\]
	exists for any enumeration $((x_n,y_n))_{n=1}^\infty$ of
	the elements of $A$ for which $x_n+y_n
	\xrightarrow[n\to\infty]{} \infty$. The limit value
	$\asymdirect(A) \in [0,1]$ is called the \emph{asymptotic
		RSK-angle of~$A$}, or, in a slight abuse of language, the
	asymptotic direction of~$A$.
\end{definition}

\section{Plancherel-normal and Plancherel-random IYTs}
\label{sec:proofs-plancherel-normal}

\subsection{Plancherel-normal tableaux}

We continue using
the notation $T_n$, $\origin_n$, and $\gamma_n$ from \cref{sec:proofs-general}.
Recall from \cref{sec:trajectories-trees-cusps} the jeu de taquin
trajectories $\jdtpath^T_{x,y}$ and their lazy parametrization; in this
notation the paths $\gamma_n$ are the trajectories from the origin in the
iterated tableaux $T_n$, that is, $\gamma_n = \jdtpath^{T_n}_{0,0}$.

We now extend the analysis of \cref{sec:proofs-general} to
the more restricted case of Plancherel-random IYTs. It will
be convenient to encapsulate the probabilistic nature of our
results by stating them in the generality of a class of IYTs
satisfying a certain minimal set of \emph{deterministic}
assumptions, which are known to hold for almost every IYT
chosen according to Plancherel measure. We call such IYTs
\emph{Plancherel-normal}.

\begin{definition}[Plancherel-normal tableau]
	\label{def:plancherel-normal}
	An IYT $T$ is called \emph{Plancherel-normal} if it satisfies the following
	conditions:
	\begin{enumerate}[label=\textup{(N\arabic*)}, ref=(N\arabic*)]
		\item \label{item:pn-asymdirect} Each jeu de taquin path
		$\gamma_n$ has an asymptotic direction
		\begin{equation} \label{eq:zn-asymdirect-gamman}
		      z_n := \asymdirect(\gamma_n).
		\end{equation}

		\item \label{item:pn-distinct} The values $z_1, z_2, z_3, \dots$ are
		distinct.

		\item \label{item:pn-dense} The set $\{z_n : n\geq 1\}$ is dense in $[0,1]$.
	\end{enumerate}
\end{definition}

The justification for this definition is contained in the following result.

\begin{proposition}[Plancherel-random tableaux are Plancherel-normal]
\label{prop:ae-Plancherel-normal}
Almost every IYT chosen at random according to Plancherel measure is
Plancherel-normal.
\end{proposition}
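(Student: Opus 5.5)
We will verify the three conditions (N1)--(N3) separately, each for $P$-almost every $T$. Everything reduces to \cref{thm:romiksniady-main} via the isomorphism between the shift system $([0,1]^\N,\mathcal{B},\Lambda,S)$ and the jeu de taquin system $(\Omega,\mathcal{F},P,J)$. Since a countable intersection of full-measure events has full measure, it suffices to show that each condition, and for (N1) each individual index $n$, holds on a set of full measure.

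For (N1), fix $n\ge1$. By part 5 of \cref{thm:romiksniady-main}, the limit $\lim_{k\to\infty}(x_k-y_k)/\sqrt{T_{x_k,y_k}}$ along the jeu de taquin path exists almost surely; this is the asymptotic-straightness of $\gamma_1$ established in \cite{RomikSniady2015}. We need to convert this into the existence of $\asymdirect(\gamma_1)$ in the sense of \cref{def:asymptotic-direction}. The relevant statement from \cite{RomikSniady2015} is that $(x_k,y_k)/\sqrt{T_{x_k,y_k}}$ converges to the point of the LSVK curve with Russian coordinate equal to that limit, and this point is nonzero. Given that, the RSK-angle $\alpha(x_k,y_k)$, which depends only on the ray through $(x_k,y_k)$ and is continuous away from the origin, converges to the angle of that point. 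Since $\gamma_1$ is a path with $x_k+y_k\to\infty$, every enumeration of its boxes escaping to infinity has the same limit, so $\asymdirect(\gamma_1)$ exists.

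By our parametrization, the LSVK point at RSK-angle $z$ has Russian coordinate $\scq_z=F^{-1}(1-z)$. Hence $\asymdirect(\gamma_1)=1-F(\lim\cdots)=\zeta(T)$, so the formula for $\zeta$ in \eqref{eq:inverse-rsk-firstcoord} is exactly the RSK-angle. Now $\gamma_n$ is the jeu de taquin path of $J^{n-1}(T)$, and $J$ preserves $P$ by part 2 of \cref{thm:romiksniady-main}. Therefore the event ``$\gamma_1$ has an asymptotic direction'', pulled back by $J^{n-1}$, again has full measure. This proves (N1), and moreover gives $z_n=\zeta(J^{n-1}T)$.

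For (N2) and (N3), apply \eqref{eq:inverse-rsk-explicit}: almost surely $(z_1,z_2,\ldots)=\RSK^{-1}(T)$. By parts 1 and 4 of \cref{thm:romiksniady-main}, this sequence has law $\Lambda$, i.e.\ it is i.i.d.\ uniform on $[0,1]$. Distinctness holds because, for each pair $i<j$, $\Lambda(z_i=z_j)=0$, and there are countably many pairs. Density holds because, for each rational interval $I\subset[0,1]$, $\Lambda(z_n\notin I\ \forall n)=\lim_N(1-|I|)^N=0$, and again we take a countable union of null events.

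The main obstacle is the geometric translation in (N1). We must confirm that the convergence proved in \cite{RomikSniady2015} is strong enough to give convergence of the RSK-angle. In particular, we need that the limiting point lies on the LSVK curve rather than merely having a limiting ratio $u/\sqrt{t}$. We also need to handle the boundary directions $z\in\{0,1\}$, where the angle coordinate degenerates only at the origin and so causes no trouble. If only the ratio $(x_k-y_k)/\sqrt{T_{x_k,y_k}}$ were available, we would add the almost sure LSVK limit shape theorem. That theorem shows that the convex corner $\gamma_1[t]$ of $\lambda^{(t)}$ (\cref{lem:trajs-convex-corners}), rescaled by $\sqrt t$, lies asymptotically on the curve $v=\Omega(u)$. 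This pins down $v$ from $u$ and hence determines the angle.
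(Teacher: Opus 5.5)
Your proposal is correct and follows essentially the same route as the paper: it obtains (N2)--(N3) from the fact that $\RSK^{-1}(T)$ is i.i.d.\ uniform, and (N1) by identifying $\zeta(J^{n-1}T)$ with the asymptotic RSK-angle of $\gamma_n$, all via \cref{thm:romiksniady-main}. The difference is only one of detail. The paper simply asserts that \eqref{eq:inverse-rsk-firstcoord} is equivalent to $z_1=\asymdirect(\gamma_1)$, whereas you spell out that translation through the limit shape, and also the measure-preservation and countable-union arguments.
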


\begin{proof}
This is a consequence of the results of
\cite{RomikSniady2015} cited in the Introduction.
Specifically, let $T$ be a random IYT chosen according to
Plancherel measure. \cref{thm:romiksniady-main} implies that
$z_1,z_2,\ldots$ are i.i.d. random variables with the
uniform distribution $U(0,1)$. This gives properties
\labelcref{item:pn-distinct,item:pn-dense} in the definition
above. Moreover, the formula
\eqref{eq:inverse-rsk-firstcoord} is equivalent to the
statement that $z_1 = \asymdirect(\gamma_1)$. When combined
with the explicit formula \eqref{eq:inverse-rsk-explicit}
for the inverse map, this gives
\eqref{eq:zn-asymdirect-gamman}.
\end{proof}

\begin{assumption}[Standing assumption for this section]
	\label{ass:plancherel-normal}
	Throughout this section, $T$ is a fixed (deterministic)
	Plancherel-normal IYT in the sense of \cref{def:plancherel-normal}.
\end{assumption}

\noindent By \cref{prop:ae-Plancherel-normal}, any property that we prove
for $T$ under this assumption automatically holds with probability~$1$ for a
Plancherel-random IYT.

The next lemma explains that the standing assumption is
preserved under the jeu de taquin map; the proof is
immediate from the definitions (removing finitely many
points from a dense subset of $[0,1]$ leaves it dense), and
is omitted. 

\begin{lemma}[Plancherel-normality is preserved by jeu de taquin]
	\label{lem:pn-preserved-by-jdt}
	For every $k \geq 1$, the iterated tableau $T_k = \jdt^{k-1}(T)$ is again
	Plancherel-normal. Its jeu de taquin paths are
	$\gamma_k, \gamma_{k+1}, \ldots$, with respective asymptotic
	directions $z_k, z_{k+1}, \ldots$; in particular, the set of
	asymptotic directions of the jeu de taquin paths of $T_k$ is
	$\{z_n : n \geq k\}$.
\end{lemma}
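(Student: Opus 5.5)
The argument is a direct unwinding of the definitions; the only point needing care is the indexing of the iterated tableaux. Fix $k\ge1$ and set $T' = T_k = \jdt^{k-1}(T)$. Following the conventions of \cref{sec:proofs-general}, write $T'_m = \jdt^{m-1}(T')$ for the iterated tableaux of $T'$ and $\gamma'_m$ for the jeu de taquin path of $T'_m$.

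\emph{Identifying the paths.} Since $T'_m = \jdt^{m-1}(\jdt^{k-1}(T)) = \jdt^{m+k-2}(T) = T_{m+k-1}$, the path $\gamma'_m$ is the jeu de taquin path of $T_{m+k-1}$, namely $\gamma'_m = \gamma_{m+k-1}$. As $m$ runs over $\N$, the paths of $T'$ are therefore $\gamma_k,\gamma_{k+1},\ldots$ in this order, which is the second assertion of the lemma.

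\emph{Checking (N1)--(N3).} By \ref{item:pn-asymdirect} for $T$, each $\gamma_{m+k-1}$ has an asymptotic direction $z_{m+k-1}$. Hence $T'$ satisfies \ref{item:pn-asymdirect}, with $z'_m = z_{m+k-1}$ and set of directions $\{z_n : n\ge k\}$. Property \ref{item:pn-distinct} for $T'$ holds because $(z'_m)_m$ is a subsequence of the distinct sequence $(z_n)_n$.

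For \ref{item:pn-dense}, the set $\{z_n:n\ge k\}$ is obtained from the dense set $\{z_n:n\ge1\}$ by deleting the finitely many points $z_1,\ldots,z_{k-1}$. Let $I\subset[0,1]$ be a nonempty relatively open interval. It contains infinitely many points of $\{z_n:n\ge1\}$: a dense subset of $[0,1]$ meets each of infinitely many disjoint open subintervals of $I$, and the $z_n$ are distinct. At most $k-1$ of these points are removed, so $I$ still meets $\{z_n:n\ge k\}$. This establishes density.

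None of these steps is a genuine obstacle. The only thing to verify carefully is the index shift $T'_m = T_{m+k-1}$.
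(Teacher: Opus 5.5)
Your proof is correct and is the argument the paper has in mind when it calls the lemma immediate and omits the proof. The index shift $\jdt^{m-1}(T_k) = T_{m+k-1}$ identifies the paths and their directions, (N2) is inherited by a subsequence, and (N3) holds because deleting the finitely many points $z_1,\ldots,z_{k-1}$ from a dense subset of $[0,1]$ leaves it dense. Your appeal to distinctness of the $z_n$ in the density step is harmless but not needed, since $\{z_n : n\ge k\} \supseteq \{z_n : n \ge 1\}\setminus\{z_1,\ldots,z_{k-1}\}$ and a dense set already meets every nonempty relatively open interval in infinitely many points.
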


\subsection{Confinement of trajectories to trees}

We establish that each path $\gamma_n$ eventually remains
within a single tree of~$T$.

\begin{lemma}[Eventual confinement to a single tree]
	\label{lem:trajectory_in_tree}
	For each $n \geq 1$, after removing a suitable finite
	initial segment from the path $\gamma_n$, the remaining tail becomes a jeu de
	taquin trajectory of the original tableau $T$. In particular, all but
	finitely many boxes of $\gamma_n$ lie within a single tree
	of~$T$.
\end{lemma}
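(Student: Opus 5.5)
The plan is to compare the tableaux $T_n$ and $T$ box by box. The key observation is that each application of $\jdt$ changes an entry by something other than a uniform decrement only on the jeu de taquin path of the current tableau. Concretely, if a box $b$ does not lie on $\gamma_k$, then $T_{k+1}(b) = T_k(b) - 1$. Iterating, any box $b \notin \gamma_1 \cup \cdots \cup \gamma_{n-1}$ satisfies
\[
	T_n(b) = T(b) - (n-1).
\]
Consequently, for a box $(x,y)$ such that neither $(x+1,y)$ nor $(x,y+1)$ lies on $\gamma_1 \cup \cdots \cup \gamma_{n-1}$, the comparison between $T_n(x+1,y)$ and $T_n(x,y+1)$ is the same as the comparison between $T(x+1,y)$ and $T(x,y+1)$. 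So the outgoing edge of $(x,y)$ is the same in $\jdtforest(T_n)$ and in $\jdtforest(T)$.

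The main step is to show that, beyond some finite initial segment, every box of $\gamma_n$ has both forward neighbours off the earlier paths $\gamma_1, \ldots, \gamma_{n-1}$. I will use assumptions \ref{item:pn-asymdirect} and \ref{item:pn-distinct}: $\gamma_k$ has asymptotic direction $z_k$, and $z_k \neq z_n$ for $k < n$. Set $\varepsilon = \tfrac13 \min_{k<n} |z_k - z_n| > 0$.

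1. Since $\gamma_n$ is an infinite monotone path, $x+y \to \infty$ along it. By \cref{def:asymptotic-direction}, there is $R$ such that every box of $\gamma_n$ with $x+y \ge R$ has RSK-angle within $\varepsilon$ of $z_n$.
2. Enlarging $R$, every box of each $\gamma_k$ ($k<n$) with $x+y \ge R$ has angle within $\varepsilon$ of $z_k$.
3. I then need that a unit shift does not move the angle much far from the origin: $|\alpha(x+1,y) - \alpha(x,y)| \to 0$ and $|\alpha(x,y+1) - \alpha(x,y)| \to 0$ uniformly as $x+y \to \infty$. This follows because $\alpha$ is homogeneous of degree $0$ in $(x,y)$, since $\rho$ scales and $\alpha$ does not in \eqref{eq:def-rsk-polar}. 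It is also continuous on the compact set $\{x,y\ge 0,\ x+y=1\}$, hence uniformly continuous there. Rescaling by $x+y$ converts a unit shift into a perturbation of size $O(1/(x+y))$ on that set.
4. So for $x+y$ large, the forward neighbours of a box of $\gamma_n$ have angle within $2\varepsilon$ of $z_n$, hence cannot be boxes of $\gamma_k$ that have norm at least $R$.
5. The boxes of $\gamma_k$ with $x+y < R$ are finitely many, and $\gamma_n$ eventually has $x+y$ exceeding all of them. So they too are avoided past some point.

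I expect this separation step, in particular the uniform-continuity estimate in item 3, to be the main technical obstacle, though it is routine.

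Choose an index $N$ along $\gamma_n$ past which the separation holds, and let $b_N$ be the $N$-th box of $\gamma_n$. By the first paragraph, each subsequent step of $\gamma_n$, which is the outgoing edge in $\jdtforest(T_n)$, is also the outgoing edge in $\jdtforest(T)$. Hence the tail of $\gamma_n$ from $b_N$ equals $\jdtpath^T_{b_N}$. Since a trajectory of $T$ follows directed edges of $\jdtforest(T)$, it stays in a single tree (the one containing $b_N$), which proves the final sentence of the lemma.
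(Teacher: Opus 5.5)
Your proposal is correct and follows essentially the same route as the paper: the distinct asymptotic directions $z_1,\dots,z_n$ confine the tail of $\gamma_n$, together with its forward neighbours, to an angular sector disjoint from the tails of $\gamma_1,\dots,\gamma_{n-1}$. Off those earlier paths $T_n$ differs from $T$ only by the uniform shift $n-1$, so the local comparisons, and hence the outgoing edges, agree. Your version is slightly sharper in two places: it only requires the two forward neighbours of each tail box to avoid the earlier paths, and it proves (via degree-$0$ homogeneity and uniform continuity of $\alpha$) the paper's unproved remark that neighbouring boxes far from the origin have nearly equal RSK-angles.
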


\begin{proof}
	The path $\gamma_n$ is defined in the tableau $T_n$,
	which results from applying the jeu de taquin transformation
	$n-1$ times to $T$. Our strategy is to show that $\gamma_n$
	eventually enters a region where $T$ and $T_n$ differ only by
	relabeling: entries differ by the constant $n-1$, but their
	relative order is preserved.

	\medskip
	We start by observing a useful separation property: 
	since $T$ is Plancherel-normal, the asymptotic directions
	$z_1, \ldots, z_n$ are distinct.
	It follows that the paths must eventually separate:
	there is a finite initial segment of $\gamma_n$ whose removal
	leaves a tail $R$ such that no box of $R$ coincides with, or
	neighbors, any box visited by $\gamma_1, \ldots, \gamma_{n-1}$.
	Indeed, choose $\epsilon > 0$ smaller than a third of the
	minimal gap between the numbers $z_1, \ldots, z_n$. All but
	finitely many boxes of each $\gamma_k$ (for
	$1 \leq k \leq n$) have RSK-angle within $\epsilon$ of
	$z_k$; moreover, far from the origin, neighboring boxes
	have almost equal RSK-angles. Hence the tails of the
	paths, together with their neighboring boxes, are
	eventually confined to pairwise disjoint angular sectors.
	The finitely many exceptional boxes are avoided by removing
	from $\gamma_n$ a sufficiently long initial segment, since
	the boxes of a trajectory eventually leave any bounded
	region.
	
	\medskip
	This separation has a crucial consequence. The $n-1$
	applications of $\jdt$ that transform $T$ into $T_n$ perform
	sliding operations only in boxes visited by $\gamma_1, \ldots,
	\gamma_{n-1}$. Since the boxes of $R$ are separated from these,
	no sliding operation affects $R$ or its neighbors. The only
	change there is a global relabeling: each entry $k$ at a box of
	$R$ in $T$ becomes $k-(n-1)$ in $T_n$.
	
	This relabeling preserves all order relations between entries,
	hence the local comparison rules defining the jeu de taquin
	forest structure remain identical in both tableaux throughout
	$R$. Therefore, the tail $R$ of $\gamma_n$ follows the same
	directed edges in $T_n$ as it would in $T$. Being a directed
	path in $\jdtforest(T)$, the tail coincides with the
	trajectory $\jdtpath^T_{x_0,y_0}$ started at its first box
	$(x_0,y_0)$, and in particular lies entirely within a single
	tree of the jeu de taquin forest of $T$.
\end{proof}

\subsection{Asymptotic direction for the trees $\origin_n$}

The main result of this subsection is
\cref{prop:tree_uniform_limit}, which establishes that each tree
$\origin_n$ has a well-defined asymptotic direction equal to
$z_n$. We first prove this for $n=1$ in
\cref{lem:tree_uniform_limit}, then extend to all $n$ by
reducing to the $n=1$ case. We freely use the partial order $\preceq$,
the lazy parametrization, and the convex-corner properties of
\cref{sec:trajectories-trees-cusps}. We will also make use of the elementary
observation that $\preceq$
respects RSK-angular coordinates: if $(x_1,y_1) \preceq (x_2,y_2)$ then
$\alpha(x_1,y_1) \leq \alpha(x_2,y_2)$.

\subsubsection{Angular confinement of the tree at the origin}

We now establish that the asymptotic direction of a trajectory
is in fact a property of the entire tree containing that
trajectory.

\begin{lemma}[Asymptotic direction of the tree at the origin]
	\label{lem:tree_uniform_limit}
	The tree at the origin in $\jdtforest(T)$ has asymptotic direction $z_1$:
	\[
	\asymdirect(\origin_1) = z_1.
	\]
\end{lemma}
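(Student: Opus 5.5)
The plan is to sandwich every box of $\origin_1$, far from the origin, between two trajectories whose asymptotic directions are arbitrarily close to $z_1$ from below and from above, and then let the two outer directions close in on $z_1$. Fix $\epsilon>0$. By \ref{item:pn-dense} we can choose indices $j,k\ge2$ with $z_1-\epsilon<z_j<z_1<z_k<z_1+\epsilon$. By \cref{lem:trajectory_in_tree}, the tail of $\gamma_j$ is a trajectory $\jdtpath^T_{a}$ of $T$, and likewise the tail of $\gamma_k$ is a trajectory $\jdtpath^T_{b}$. Each lies in a single tree of $T$, and these trajectories have asymptotic directions $z_j$ and $z_k$ respectively.

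The first step is to show that these trees differ from $\origin_1$. Since $z_j\neq z_1$, a tree containing both $\gamma_1$ and the tail of $\gamma_j$ would, by \cref{lem:trajectories-merge}, force $\gamma_1$ and $\jdtpath^T_a$ to eventually coincide. That would make their asymptotic directions equal, which is impossible. So both trees are disjoint from $\origin_1$.

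Next I would use the lazy parametrization and \cref{lem:order-preservation} to trap each box $c\in\origin_1$. Fix $c$ and consider the three lazy trajectories $\jdtpath^T_a$, $\jdtpath^T_c$ and $\jdtpath^T_b$. For $t$ large, all three are active and sit at convex corners of $\lambda^{(t)}$, so by \cref{lem:convex-corners} they are pairwise $\preceq$-comparable. The trajectories of $c$ and of the origin eventually merge, so the trajectory of $c$ eventually has direction $z_1$. Since $z_j<z_1<z_k$ and $\preceq$ respects RSK-angles (with equality excluded because the trees differ), for large $t$ we get
\[
\jdtpath^T_a[t]\preceq\jdtpath^T_c[t]\preceq\jdtpath^T_b[t].
\]
This is an ordering at late times only, which does not directly say where the starting box $c$ sits. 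What I actually want is that $c$ itself lies in the angular sector between the two trajectories, or close to it.

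For that I would argue by contradiction, running order preservation backwards in spirit. Suppose $c$ is strictly northwest of the set of boxes of $\jdtpath^T_a$, in the sense that $c\preceq\jdtpath^T_a[T_c]$ at the time $T_c$ when $c$ becomes active. Then \cref{lem:order-preservation} gives $\jdtpath^T_c[t]\preceq\jdtpath^T_a[t]$ forever. Combined with the late-time ordering above and antisymmetry, this would force the two trajectories to coincide, contradicting that they lie in different trees. At time $T_c$ both boxes $c$ and $\jdtpath^T_a[T_c]$ are convex corners of $\lambda^{(T_c)}$, hence comparable, so this contradiction yields $\jdtpath^T_a[T_c]\preceq c$. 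Symmetrically, $c\preceq\jdtpath^T_b[T_c]$. Thus every box $c$ of $\origin_1$ lies, in $\preceq$ order, between two boxes on the paths $\jdtpath^T_a$ and $\jdtpath^T_b$. By the monotonicity of $\alpha$ with respect to $\preceq$, this gives $\alpha(\jdtpath^T_a[T_c])\le\alpha(c)\le\alpha(\jdtpath^T_b[T_c])$. One caveat is that before $a$ or $b$ becomes active the lazy position is a non-corner waiting box; this affects only finitely many $c$, namely those with $T_c<\max(T_a,T_b)$.

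The final step, and the main obstacle, is to ensure that $\jdtpath^T_a[T_c]$ is far from the origin whenever $c$ is, so that its angle is within $\epsilon$ of $z_j$. Only finitely many boxes have entry at most any given bound, so $x_c+y_c\to\infty$ forces $T_c\to\infty$. The lazy position of a fixed trajectory at time $t$ then tends to infinity as $t\to\infty$, because the trajectory passes through all of its boxes. Hence for all but finitely many $c\in\origin_1$ we get $\alpha(c)\in(z_1-2\epsilon,z_1+2\epsilon)$. Letting $\epsilon\to0$ gives $\asymdirect(\origin_1)=z_1$. I expect the delicate points to be the comparability and activity bookkeeping at time $T_c$, and checking that the late-time ordering really is strict on the correct side; for the latter I would use that distinct asymptotic angles force strict $\preceq$ order for large $t$.
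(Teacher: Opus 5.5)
Your proposal is correct and follows essentially the same route as the paper. You compare each $c\in\origin_1$ at time $T_c$ with a tail of some $\gamma_j$ viewed as a trajectory of $T$, use convex-corner comparability, and rule out the wrong order via \cref{lem:order-preservation}. The paper differs only in execution: it does one side and invokes symmetry, it gets the contradiction directly by taking limits in $\alpha(\jdtpath^T_c[t])\le\alpha(\jdtpath^T_a[t])$ rather than via antisymmetry and distinct trees, and it pre-trims the tail so all its boxes already have angle $>z_1-\epsilon$. The one thing you omit is the degenerate case $z_1\in\{0,1\}$, where $z_j$ or $z_k$ cannot be chosen; there the corresponding one-sided bound is trivial.
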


\begin{proof}
	We establish that for any $\epsilon > 0$, all but finitely
	many boxes $(x,y)$ in $\origin_1$ satisfy
	$|\alpha(x,y) - z_1| < \epsilon$. By symmetry, it suffices to
	prove the lower bound $\alpha(x,y) > z_1 - \epsilon$; the upper
	bound follows analogously.
	
	Our strategy is to construct comparison trajectories with
	known asymptotic directions and show that they bound
	trajectories within $\origin_1$ using the partial order 
	$\preceq$ that respects RSK-angular coordinates.
	
	\medskip
	\noindent\textbf{Step 1: Constructing a comparison trajectory.}
	If $z_1 = 0$ then the lower bound holds trivially, since
	$\alpha(x,y) \geq 0$ for every box; we may therefore assume
	$z_1 > 0$.
	Since $T$ is Plancherel-normal, the numbers
	$z_1, z_2, z_3, \ldots$ are dense in $[0,1]$. Given
	$\epsilon > 0$, choose an index $k$ such that
	$z_1 - \epsilon < z_k < z_1$.

	By \cref{lem:trajectory_in_tree}, after removing a finite
	initial segment from $\gamma_k$, the remaining tail becomes a jeu de taquin
	trajectory of $T$, lying within a single tree of $\jdtforest(T)$.
	Moreover, by possibly removing an additional finite prefix of this tail, we
	further restrict to a tail segment of $\gamma_k$ lying within a single tree of
	$\jdtforest(T)$, and with the additional property that all its boxes $(X,Y)$
	satisfy
	$\alpha(X,Y) > z_1 - \epsilon$; this is possible, since
	$\asymdirect(\gamma_k) = z_k > z_1 - \epsilon$ implies that all but
	finitely many boxes of $\gamma_k$ satisfy this inequality. Let $\jdtpath$
	denote
	this tail segment, denote its first box $(x_0, y_0)$, and denote
	$\tau_0 = T_{x_0, y_0}$. 
	Note that $\jdtpath$ coincides with
	the jeu de taquin trajectory $\jdtpath^T_{x_0,y_0}$ of the
	tableau $T$.

	\medskip
	\noindent\textbf{Step 2: Comparison setup.}
	Consider a box $(x,y) \in \origin_1$ with entry value
	$t_0 = T_{x,y} > \tau_0$. We establish the lower bound for all
	such boxes; since only finitely many boxes in $\origin_1$ have
	entries at most $\tau_0$, this suffices.
	
	Let $\jdtpath' = \jdtpath^T_{x,y}$ denote the jeu de taquin
	trajectory starting at $(x,y)$. 
	
	\medskip
	\noindent\textbf{Step 3: Trajectory ordering.}
	We claim that
	\begin{equation}
		\label{eq:ordered_by_prec}
		\jdtpath[t_0] \prec \jdtpath'[t_0].
	\end{equation}
	
	\smallskip
	\noindent\emph{Proof.}
	By \cref{lem:trajs-convex-corners}, 
	both
	$\jdtpath[t_0]$ and $\jdtpath'[t_0]$ are convex corners of
	$\lambda^{(t_0)} = \{(x',y') : T_{x',y'} \leq t_0\}$. Therefore by 
	\cref{lem:convex-corners}\ref{item:tc-2}, 
	they are
	comparable with respect to~$\preceq$: one is weakly northwest 
	or weakly southeast of the other.	
	
	Assume by contradiction that $\jdtpath[t_0] \succeq \jdtpath'[t_0]$.
	Applying \cref{lem:order-preservation} to
	the trajectories $\jdtpath'$ and $\jdtpath$ (in that order) at
	time~$t_0$, this ordering persists:
	$\jdtpath[t] \succeq \jdtpath'[t]$ for all $t \geq t_0$.
	By monotonicity of angular coordinates,
	$\alpha(\jdtpath[t]) \geq \alpha(\jdtpath'[t])$ for all $t\geq t_0$.
	
	\smallskip
	Now, since $\jdtpath$ is obtained by removing a finite prefix from
	$\gamma_k$, the asymptotic direction is preserved:
	$\asymdirect(\jdtpath) = \asymdirect(\gamma_k) = z_k$.

	On the other hand, the path $\jdtpath'$ lies entirely within $\origin_1$, as
	does
	$\gamma_1$. Since both paths lie in the same tree, they must
	eventually merge (\cref{lem:trajectories-merge}).
	Therefore, they share the same
	asymptotic direction:
	\[\asymdirect(\jdtpath')  =
	\asymdirect(\gamma_1)  = z_1.\]
	
	Now taking limits in $\alpha(\jdtpath[t]) \geq \alpha(\jdtpath'[t])$ yields
	\[
	z_k = \lim_{t \to \infty} \alpha(\jdtpath[t])
	\geq \lim_{t \to \infty} \alpha(\jdtpath'[t]) = z_1,
	\]
	contradicting $z_k < z_1$. Since we arrived at a contradiction,
	\eqref{eq:ordered_by_prec} must hold.
	
	\medskip
	\noindent\textbf{Step 4: Conclusion.}
	From \eqref{eq:ordered_by_prec}, we have
	$\jdtpath[t_0] \prec \jdtpath'[t_0] = (x,y)$. Since
	$\jdtpath[t_0]$ is one of the boxes of $\jdtpath$, all of
	which have RSK-angle exceeding $z_1 - \epsilon$, it follows
	that
	\[
	z_1 - \epsilon < \alpha(\jdtpath[t_0]) < \alpha(x,y).
	\]
	This holds for all $(x,y) \in \origin_1$ with
	$T_{x,y} > \tau_0$.
	As mentioned earlier, this suffices to finish the proof.
\end{proof}

\subsubsection{Angular confinement for all origin-containing trees $\origin_n$}

\Cref{lem:tree_uniform_limit} established angular confinement
for the tree $\origin_1$ containing the origin in $T$. We now
extend this result to all trees $\origin_n$.

\begin{proposition}[Asymptotic direction for all $\origin_n$]
	\label{prop:tree_uniform_limit}
	For each $n \geq 1$, the
	tree at the origin in $\jdtforest(T_n)$ has asymptotic
	direction $z_n$:
	\[
	\asymdirect(\origin_n)  = z_n.
	\]
\end{proposition}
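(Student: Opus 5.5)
The plan is to reduce to \cref{lem:tree_uniform_limit} applied to the tableau $T_n$ instead of $T$. By \cref{lem:pn-preserved-by-jdt}, $T_n = \jdt^{n-1}(T)$ is again Plancherel-normal. Its jeu de taquin paths are $\gamma_n, \gamma_{n+1}, \ldots$, with asymptotic directions $z_n, z_{n+1}, \ldots$. So the sequence ``$z_1, z_2, \ldots$'' attached to the Plancherel-normal tableau $T_n$ is exactly $(z_{n}, z_{n+1}, \ldots)$, and its first term is $z_n$.

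Since \cref{lem:tree_uniform_limit} was proved for an arbitrary Plancherel-normal tableau, it applies verbatim to $T_n$. The tree at the origin of $\jdtforest(T_n)$ is by definition $\origin_n$, so the lemma gives $\asymdirect(\origin_n) = z_n$ directly.

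The one point I would check carefully is that the proof of \cref{lem:tree_uniform_limit} uses only properties (N1)--(N3) of the tableau in question, together with \cref{lem:trajectory_in_tree} for that same tableau. In particular it must not rely on anything specific to $T$ as opposed to $T_n$. It does not: Step~1 uses density of the directions of the paths of $T_n$, namely the set $\{z_m : m \ge n\}$, which is still dense in $[0,1]$. It also uses \cref{lem:trajectory_in_tree} applied to the Plancherel-normal tableau $T_n$, whose iterates are $T_{n+j-1}$. Distinctness is likewise inherited by the subsequence.

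So the main obstacle is purely bookkeeping: making sure the standing assumption and the notation $\gamma_k$ in the lemma's proof are reinterpreted relative to $T_n$, with the paths of $T_n$ re-indexed by shifting by $n-1$. No new estimate is needed.
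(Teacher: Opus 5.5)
Your proposal is correct and is essentially the paper's argument: $T_n$ is Plancherel-normal by \cref{lem:pn-preserved-by-jdt}, its origin path is $\gamma_n$ with direction $z_n$, its origin tree is $\origin_n$, and one applies \cref{lem:tree_uniform_limit} to $T_n$. Your extra check is sound bookkeeping that the paper leaves implicit: the lemma's proof uses only (N1)--(N3) and \cref{lem:trajectory_in_tree} for the tableau in question, with the iterates re-indexed as $T_{n+j-1}$.
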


\begin{proof}
	Set $T' := T_n = \jdt^{n-1}(T)$. $T'$ is Plancherel-normal
	by \cref{lem:pn-preserved-by-jdt}. Its jeu de taquin path
	from the origin is $\gamma_n = \jdtpath^{T'}_{0,0}$ by
	definition, and has asymptotic direction
	$\asymdirect(\gamma_n) = z_n$. The tree at the origin of
	$T'$ is precisely $\origin_n$. Therefore the result follows
	immediately by applying \cref{lem:tree_uniform_limit}
	to~$T'$. %
\end{proof}

\subsection{Applications of angular confinement}

Throughout this section, when we write $\tree \subseteq \tree'$,
$\tree \cap \tree'$, or $\tree \setminus \tree'$ for trees in
possibly different tableaux, we mean set-theoretic operations on
the underlying sets of boxes (as elements of $\N_0^2$), ignoring
the graph structure and tableau entries. This allows us to
compare spatial positions of boxes across different stages of the
jeu de taquin transformation.

\subsubsection{Trees at the origin have finite intersections}

\begin{corollary}[Finite intersection property for $\origin_n$]
	\label{thm:trees-almost-disjoint}
	For all positive integers
	$i \neq j$, the intersection $\origin_i \cap \origin_j$ is
	finite.
\end{corollary}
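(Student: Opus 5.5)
The plan is to combine \cref{prop:tree_uniform_limit} with the distinctness condition \ref{item:pn-distinct} of Plancherel-normality. Fix $i \neq j$. By \cref{prop:tree_uniform_limit}, $\asymdirect(\origin_i) = z_i$ and $\asymdirect(\origin_j) = z_j$. By \ref{item:pn-distinct}, $z_i \neq z_j$. Set $\delta := |z_i - z_j| > 0$.

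We argue by contradiction. Suppose $\origin_i \cap \origin_j$ is infinite. Since only finitely many boxes of $\N_0^2$ satisfy $x+y \le R$ for any fixed $R$, we can enumerate the intersection as $((x_m,y_m))_{m\ge1}$ with $x_m + y_m \to \infty$.

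This sequence is an enumeration of an infinite subset of $\origin_i$ tending to infinity. The definition of asymptotic direction (\cref{def:asymptotic-direction}) asks for convergence along every enumeration of the whole set $A$ with $x_n + y_n \to \infty$. To pass to subsets, we interleave: take our sequence and merge it with an enumeration of the remaining boxes of $\origin_i$ ordered by $x+y$. The merged sequence still tends to infinity, so $\alpha$ converges to $z_i$ along it, and hence along the subsequence $(x_m,y_m)$. Equivalently, all but finitely many boxes of $\origin_i$ have RSK-angle within $\delta/3$ of $z_i$; this is the form in which the proof of \cref{lem:tree_uniform_limit} actually establishes the statement.

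The same argument applied to $\origin_j$ gives $\alpha(x_m,y_m) \to z_j$. Therefore $z_i = z_j$, which is a contradiction.

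The only step needing care is this passage from the definition of asymptotic direction to the statement about arbitrary infinite subsets. It is routine, and there is no substantive obstacle. Note also that the boxes are compared purely as elements of $\N_0^2$, as the convention at the start of this subsection specifies, so it does not matter that $\origin_i$ and $\origin_j$ are trees of different tableaux.
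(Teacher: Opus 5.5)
Your proof is correct and is essentially the paper's argument: both combine \cref{prop:tree_uniform_limit} with the distinctness condition \ref{item:pn-distinct}. The paper argues directly, noting that for $\epsilon < |z_i - z_j|/3$ all but finitely many boxes of $\origin_i$ and of $\origin_j$ lie in disjoint angular windows, while your contradiction-along-a-sequence version (including the interleaving step) is a reformulation of that same fact.
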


\begin{proof}
	By the Plancherel-normality assumption, $z_i \neq z_j$. Choose
	$\epsilon < |z_i - z_j|/3$. By \cref{prop:tree_uniform_limit},
	all but finitely many boxes in $\origin_i$ have angular
	coordinate in $(z_i - \epsilon, z_i + \epsilon)$, and
	similarly for $\origin_j$. Since
	\[
	|z_i - z_j| > 3\epsilon,
	\]
	the intervals $(z_i - \epsilon, z_i + \epsilon)$ and
	$(z_j - \epsilon, z_j + \epsilon)$ are disjoint.
	
	It follows that for any box $(x,y) \in \origin_i \cap
	\origin_j$, either $\alpha(x,y) \notin (z_i-\epsilon,
	z_i+\epsilon)$ or $\alpha(x,y) \notin (z_j-\epsilon,
	z_j+\epsilon)$. The sets of boxes satisfying each of these
	conditions are finite. Therefore, $\origin_i \cap
	\origin_j$ is finite.
\end{proof}

\subsubsection{Finite growth of peripheral trees}

In \cref{thm:jdt-forest-evolution-weak}, we established that for
any peripheral tree $\tree$ in $T$, its image $\iota(\tree)$
under the jeu de taquin transformation contains $\tree$ as a
subtree. The following result shows that for Plancherel-normal
tableaux, this growth is bounded: each peripheral tree acquires
only finitely many new boxes.

\begin{corollary}[Peripheral trees grow by finitely many boxes]
	\label{thm:finite-growth}
	For each peripheral tree
	$\tree$ in $T$, the set difference
	$\iota(\tree) \setminus \tree$ is finite.
\end{corollary}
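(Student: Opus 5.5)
The plan is to combine \cref{prop:absorbed-boxes} with \cref{thm:trees-almost-disjoint}. Fix a peripheral tree $\tree$ of $\jdtforest(T)$. By \cref{prop:absorbed-boxes}, there is some $n \geq 2$ with
\[
	\iota(\tree) \setminus \tree \subseteq \origin_1 \cap \origin_n .
\]
Concretely, we take $n = n(\iota(\tree)) + 1$, the index furnished by \cref{rem:lifetime-after-jdt} applied to the tree $\iota(\tree)$ of $T_2 = \jdt(T)$. The point to check is that $n \neq 1$; this holds because $n \geq 2$, as guaranteed in \cref{prop:absorbed-boxes}. \Cref{thm:trees-almost-disjoint} then says that $\origin_1 \cap \origin_n$ is finite, since its two indices differ. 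The finiteness of $\iota(\tree)\setminus\tree$ follows at once.

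The only delicate point is the set-theoretic convention: the trees $\origin_1$ and $\origin_n$ live in different tableaux $T_1$ and $T_n$. Both the containment from \cref{prop:absorbed-boxes} and the finite intersection in \cref{thm:trees-almost-disjoint} are statements about underlying sets of boxes in $\N_0^2$, as agreed at the start of this subsection, so they combine without further work.

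Behind this sits the genuine input, namely the angular confinement of \cref{prop:tree_uniform_limit}. The absorbed boxes must have RSK-angles near both $z_1$ and $z_n$, and these values are distinct by Plancherel-normality. That input has already been packaged into \cref{thm:trees-almost-disjoint}, so I expect no real obstacle beyond correctly identifying $n \ge 2$.
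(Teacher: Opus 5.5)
Your proposal is correct and is essentially the paper's own proof: \cref{prop:absorbed-boxes} places $\iota(\tree)\setminus\tree$ inside $\origin_1\cap\origin_n$ for some $n\ge 2$, and \cref{thm:trees-almost-disjoint} makes that intersection finite. Your explicit choice $n = n(\iota(\tree))+1$ is correct under the convention $T_n=\jdt^{n-1}(T)$, and it gives $n\ge 2$ directly since lifetimes are at least $1$.
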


\begin{proof}
	By \cref{prop:absorbed-boxes}, the boxes absorbed during the
	first transformation satisfy
	$\iota(\tree) \setminus \tree \subseteq \origin_1 \cap
	\origin_n$ for some $n \geq 2$. By
	\cref{thm:trees-almost-disjoint}, each intersection
	$\origin_1 \cap \origin_n$ is finite. Therefore,
	$\iota(\tree) \setminus \tree$ is finite.
\end{proof}

\subsubsection{Maximal extensions differ by finitely many boxes}

\begin{corollary}[Finite difference from maximal extension]
	\label{thm:max-extension}
	For each tree $\tree$ in the
	jeu de taquin forest of $T$, the difference
	$\tree_{\max} \setminus \tree$ between the tree and its
	maximal extension is finite.
\end{corollary}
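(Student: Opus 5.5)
The plan is to track $\tree$ through the finitely many iterations of $\jdt$ up to its lifetime and to sum the finitely many increments. Let $n = n(\tree)$, which is finite by \cref{prop:finite-time-origin}. If $n = 1$ then $\tree = \origin_1 = \tree_{\max}$ and there is nothing to prove. Otherwise, define a sequence of trees $\tree^{(1)} = \tree$ and $\tree^{(k+1)} = \iota_k(\tree^{(k)})$ for $1 \le k \le n-1$. Here $\iota_k$ is the injection of \cref{thm:jdt-forest-evolution-weak} applied to the tableau $T_k$. This is legitimate because $\tree^{(k)}$ is a peripheral tree of $T_k$ for every $k < n$, by the definition of the lifetime. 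By that definition we also have $\tree^{(n)} = \origin_n = \tree_{\max}$, and by \cref{thm:jdt-forest-evolution-weak} the sets of boxes form the increasing chain $\tree^{(1)} \subseteq \tree^{(2)} \subseteq \cdots \subseteq \tree^{(n)}$.

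The difference telescopes:
\[
\tree_{\max} \setminus \tree = \bigcup_{k=1}^{n-1} \bigl( \tree^{(k+1)} \setminus \tree^{(k)} \bigr).
\]
It therefore suffices to show that each term $\iota_k(\tree^{(k)}) \setminus \tree^{(k)}$ is finite. For this, apply \cref{thm:finite-growth} to the tableau $T_k$ in place of $T$. This is allowed because $T_k$ is Plancherel-normal by \cref{lem:pn-preserved-by-jdt}, and \cref{thm:finite-growth} (via \cref{prop:absorbed-boxes} and \cref{thm:trees-almost-disjoint}) used only the Plancherel-normality of the ambient tableau. A finite union of finite sets is finite, which gives the claim.

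The only point I expect to need care is the bookkeeping: \cref{thm:finite-growth} was stated for the fixed tableau $T$ under \cref{ass:plancherel-normal}, so one must note that the whole chain of results behind it transfers verbatim to $T_k$. Concretely, \cref{prop:absorbed-boxes} applied to $T_k$ places the absorbed boxes in $\origin_k \cap \origin_m$ for some $m > k$, where the $\origin$'s are re-indexed relative to $T_k$ as in \cref{rem:lifetime-after-jdt}. \Cref{thm:trees-almost-disjoint} for $T_k$ then uses the distinct directions $z_k \neq z_m$. Once this transfer is noted, the argument is immediate.
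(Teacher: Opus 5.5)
Your proof is correct and is the same argument as the paper's: you track $\tree$ through the $n(\tree)-1$ maps $\iota$, note that each step adds only finitely many boxes by \cref{thm:finite-growth}, and conclude because a finite union of finite sets is finite. Your version is slightly more careful than the paper's one-line proof in two ways: you say explicitly that \cref{thm:finite-growth} is applied to the Plancherel-normal tableaux $T_k$ (via \cref{lem:pn-preserved-by-jdt}), and you use $\iota$ only as an injection rather than as the bijection, which is established only later in \cref{prop:no-residue}.
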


\begin{proof}
	By repeated application of the bijection $\iota$ from
	\eqref{eq:def-iota}, we trace the evolution of $\tree$
	through successive tableaux until it reaches the origin after
	finitely many steps. At each step, by \cref{thm:finite-growth},
	the growth is finite. Since the total number of steps is
	finite, the cumulative growth $\tree_{\max} \setminus \tree$
	is a finite union of finite sets, hence finite.
\end{proof}

\subsubsection{Surjectivity of tree evolution}

We now show that, for Plancherel-normal tableaux, the jeu de taquin
transformation produces
no new trees: every tree in $\jdt(T)$ arises as the image
of a peripheral tree from $T$.

\begin{proposition}[Surjectivity of tree evolution]
	\label{prop:no-residue}
	The injective map $\iota$
	from~\eqref{eq:def-iota} is a bijection. That
	is, every tree in $T_2 = \jdt(T)$ is the image under $\iota$
	of some peripheral tree in $T$.
\end{proposition}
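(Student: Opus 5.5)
The plan is to argue by contradiction, combining the set-theoretic location of a hypothetical ``new'' tree with the finite intersection property of \cref{thm:trees-almost-disjoint}. Injectivity of $\iota$ is already known from \cref{thm:jdt-forest-evolution-weak}, so only surjectivity needs proof. Let $\sigma$ be a tree of $\jdtforest(\jdt(T))$, and suppose $\sigma \neq \iota(\tree)$ for every peripheral tree $\tree$ of $T$.

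\emph{Step 1: $\sigma \subseteq \origin_1$ as a set of boxes.} Every box of $\N_0^2$ belongs either to $\origin_1$ or to some peripheral tree $\tree$ of $\jdtforest(T)$. If a box $b$ lies in a peripheral tree $\tree$, then by \cref{thm:jdt-forest-evolution-weak} we have $\tree \subseteq \iota(\tree)$. So $b$ lies in the tree $\iota(\tree)$ of $\jdtforest(\jdt(T))$. Trees of $\jdtforest(\jdt(T))$ are disjoint, and $\sigma \neq \iota(\tree)$, so $b \notin \sigma$. Hence every box of $\sigma$ lies in $\origin_1$.

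\emph{Step 2: $\sigma \subseteq \origin_m$ for some $m \geq 2$.} Regard $\sigma$ as a tree of $T_2 = \jdt(T)$ and apply \cref{rem:lifetime-after-jdt} with $k=1$. Its maximal extension is $\sigma_{\max} = \origin_m$ with $m = n(\sigma) + 1 \geq 2$. Since trees only grow as they are tracked through the maps $\iota$, we get $\sigma \subseteq \sigma_{\max} = \origin_m$. This covers every tree of $T_2$, including $\origin_2$ itself, which has lifetime $1$ relative to $T_2$ and so gives $m=2$. This step is exactly the second half of the proof of \cref{prop:absorbed-boxes}, applied to $\sigma$ instead of $\iota(\tree)$.

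\emph{Step 3: contradiction.} Steps 1 and 2 give $\sigma \subseteq \origin_1 \cap \origin_m$ with $m \neq 1$. This intersection is finite by \cref{thm:trees-almost-disjoint}, which relies on the Plancherel-normality assumption through \cref{prop:tree_uniform_limit}. But every tree of a jeu de taquin forest is infinite, since each box has an outgoing edge and so each tree contains an infinite directed path. This contradiction shows that $\sigma = \iota(\tree)$ for some peripheral tree $\tree$, so $\iota$ is a bijection.

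The only delicate point is Step 1. It works because every box of $\N_0^2$ belongs to exactly one tree of $\jdtforest(T)$, and the inclusion $\tree \subseteq \iota(\tree)$ holds as sets of positions. The sliding in $J$ changes entries but not the box positions, so these set-theoretic statements are unaffected.
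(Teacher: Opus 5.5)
Your proposal is correct and follows essentially the same route as the paper: you place the hypothetical non-image tree inside $\origin_1$ and inside some $\origin_m$ with $m \geq 2$ (via the maximal-extension remark), then contradict the infiniteness of trees using the finite intersection property. Your Step~1 derives $\sigma \subseteq \origin_1$ from the disjointness of the trees of $\jdtforest(\jdt(T))$ together with the inclusions $\tree \subseteq \iota(\tree)$; this is a more explicit version of the paper's one-line observation that the only other source of boxes is $\origin_1$.
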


\begin{proof}
	By \cref{thm:trees-almost-disjoint}, for all $n \geq 2$, the
	intersection $\origin_1 \cap \origin_n$ is finite.
	
	Assume by contradiction that there exists a tree $\tree$ in
	$T_2$ that is not in the image of $\iota$. Since $\iota$ maps
	all peripheral trees of $T$ injectively to trees in $T_2$, and
	$\tree$ is not in the image, the boxes of $\tree$ cannot have
	come from any peripheral tree. By
	\cref{thm:jdt-forest-evolution-weak}, the only other source of
	boxes during the transformation is the tree at the origin
	$\origin_1$ in $T$. Therefore,
	\[
	\tree \subseteq \origin_1.
	\]
	
	On the other hand, by \cref{rem:lifetime-after-jdt}, the tree
	$\tree$ in $T_2$ satisfies
	$\tree \subseteq \tree_{\max} = \origin_n$ for some
	$n \geq 2$. Combining these inclusions,
	\[
	\tree \subseteq \origin_1 \cap \origin_n.
	\]
	
	Since $\origin_1 \cap \origin_n$ is finite, we conclude that
	$\tree$ contains only finitely many boxes. However, as
	observed in \cref{sec:intro}, every tree in a jeu de taquin
	forest is infinite. This contradiction shows that no
	such tree $\tree$ can exist. Therefore $\iota$ is surjective, hence a
	bijection.
\end{proof}

\subsection{Lifetime is a bijection}

\begin{proposition}
	\label{prop:lifetime-bijection}
	The lifetime map
	\[
	\tree \mapsto n(\tree)
	\]
	defines a bijection between the set of all trees in $\jdtforest(T)$ and
	the positive integers~$\N$.
\end{proposition}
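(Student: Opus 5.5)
We need to show the lifetime map is well defined (done in \cref{prop:finite-time-origin}), injective, and surjective onto $\N$. The plan is to track trees forward through the maps $\iota$, using \cref{prop:no-residue} as the key input. For each $k\ge1$, write $\iota_k$ for the injection from the peripheral trees of $\jdtforest(T_k)$ to the trees of $\jdtforest(T_{k+1})$. By \cref{lem:pn-preserved-by-jdt} each $T_k$ is Plancherel-normal, so \cref{prop:no-residue} applies to $T_k$ and $\iota_k$ is a bijection. For a tree $\tree$ of $T$, its tracked version in $T_m$ (for $m\le n(\tree)$) is $\iota_{m-1}\circ\cdots\circ\iota_1(\tree)$. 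Its lifetime is the first index $m$ at which this tracked tree is the tree at the origin $\origin_m$. Before that index every tracked tree is peripheral, so the composition is defined.

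\emph{Injectivity.} Suppose $n(\tree)=n(\tree')=n$. Then both tracked trees equal $\origin_n$ in $T_n$. Each $\iota_k$ for $k<n$ is applied only to peripheral trees and is injective, so the composition $\iota_{n-1}\circ\cdots\circ\iota_1$ is injective on its domain. Hence $\tree=\tree'$.

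\emph{Surjectivity.} Given $n\in\N$, we pull $\origin_n$ back to $T$. We use induction downward: set $\sigma_n:=\origin_n$, a tree of $T_n$. Given a tree $\sigma_{k+1}$ of $T_{k+1}$, surjectivity of $\iota_k$ (\cref{prop:no-residue} applied to $T_k$) gives a peripheral tree $\sigma_k$ of $T_k$ with $\iota_k(\sigma_k)=\sigma_{k+1}$. After $n-1$ steps we obtain a tree $\sigma_1$ of $T$. Each $\sigma_k$ with $k<n$ is peripheral, that is, not $\origin_k$, and its tracked image in $T_n$ is $\origin_n$. So the first time it becomes the tree at the origin is exactly $n$, and $n(\sigma_1)=n$.

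The only delicate point is consistency with the definition of lifetime. We must check that $n(\tree)$ really is the first index at which the tracked tree becomes the origin tree, and that tracking cannot continue past that index. This holds because $\iota$ is defined only on peripheral trees, and at step $n(\tree)$ the tree is $\origin_{n(\tree)}$, which dissolves under the next application of $J$. I do not expect a genuine obstacle here: all of the nontrivial content (the absence of newly created trees) is already in \cref{prop:no-residue}, and the remaining work is bookkeeping about compositions of bijections.
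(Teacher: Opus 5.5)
Your proof is correct and follows essentially the same route as the paper. Injectivity comes from the injectivity of the composed maps $\iota$, since both trees land on $\origin_n$, and surjectivity comes from pulling $\origin_n$ back through the bijections given by \cref{prop:no-residue}, applied to each Plancherel-normal $T_k$. Your added remark that each pulled-back tree $\sigma_k$ with $k<n$ is peripheral, because it lies in the domain of $\iota_k$, makes explicit why its lifetime is exactly $n$, a point the paper leaves implicit.
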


\begin{proof}
	We establish that the lifetime map is a bijection by proving
	injectivity and surjectivity separately.
	
	\medskip
	\noindent\textbf{Injectivity.}
	Suppose two trees $\tree, \tree'$ in $T$ have the same
	lifetime $n$. By \cref{thm:jdt-forest-evolution-weak}, the map
	$\iota$ induces an injection between trees at each step. After
	$n-1$ iterations of the jeu de taquin transformation, both
	trees reach the origin: $\iota^{n-1}(\tree)$ and
	$\iota^{n-1}(\tree')$ are both trees at the origin of tableau
	$T_n$. Since there is a unique tree at the origin of any
	tableau, we have
	$\iota^{n-1}(\tree) = \iota^{n-1}(\tree')$. Since
	$\iota^{n-1}$ is an injection (being the composition of
	injections), we conclude $\tree = \tree'$.
	
	\medskip
	\noindent\textbf{Surjectivity.}
	Let $n$ be a positive integer. Consider the tree $\origin_n$
	at the origin in $T_n$. By \cref{prop:no-residue}, 
	the map $\iota$ is a bijection at each step; here we use
	\cref{lem:pn-preserved-by-jdt}, by which each of the
	tableaux $T_1, \ldots, T_{n-1}$ is again Plancherel-normal,
	so that \cref{prop:no-residue} applies to it.
	Tracing $\origin_n$ backward through the inverse maps, we
	obtain a unique tree in $T$ with lifetime equal to $n$.
	
	Therefore, the lifetime map is a bijection.
\end{proof}

\subsection{Asymptotic directions for all jeu de taquin forest trees}

\begin{proposition}[Asymptotic directions for all jeu de taquin forest trees]
	\label{prop:pn-tree-properties}
	The trees of $\jdtforest(T)$ satisfy:
	\begin{enumerate}[label=(T\arabic*)]
		\item \label{item:tree-has-direction}
		Each tree $\tree$ in $\jdtforest(T)$ has an asymptotic direction
		\[ z_{\tree} := \asymdirect(\tree)  \in [0,1]. \]
		
		\item \label{item:tree-directions-distinct}
		Distinct trees have distinct asymptotic directions.
		
		\item \label{item:tree-directions-equal-zn}
		The set of asymptotic directions of the trees coincides with
		the set of inverse-RSK values:
		\begin{equation}
			\label{eq:ztreeset-equals-znset}
			\{z_{\tree} : \tree \text{ a tree in } \jdtforest(T)\}
			= \{z_n : n \geq 1\}.
		\end{equation}

		\item \label{item:tree-directions-dense}
		The set
		$\{z_{\tree} : \tree \text{ a tree in } \jdtforest(T)\}$ is dense in
		$[0,1]$.
	\end{enumerate}
\end{proposition}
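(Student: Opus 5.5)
The plan is to reduce everything to the trees at the origin $\origin_n$, using the lifetime bijection of \cref{prop:lifetime-bijection} together with \cref{thm:max-extension} and \cref{prop:tree_uniform_limit}.

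Fix a tree $\tree$ of $\jdtforest(T)$ and let $n = n(\tree)$ be its lifetime, so that $\tree_{\max} = \origin_n$. By the discussion following \cref{def:tree-max} we have $\tree \subseteq \origin_n$ as sets of boxes. By \cref{prop:tree_uniform_limit}, $\asymdirect(\origin_n) = z_n$; that is, for every $\epsilon>0$ all but finitely many boxes of $\origin_n$ have RSK-angle within $\epsilon$ of $z_n$. This property passes to any infinite subset. Since every tree is infinite, $\tree$ itself has asymptotic direction $z_n$. Any enumeration of $\tree$ with $x_m+y_m\to\infty$ eventually leaves every finite set, so its angles converge to $z_n$. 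This proves \ref{item:tree-has-direction} with $z_\tree = z_{n(\tree)}$.

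Note that we do not even need the finiteness of $\tree_{\max}\setminus\tree$ from \cref{thm:max-extension}; the inclusion $\tree\subseteq\tree_{\max}$ suffices. There is one point to double-check: the tracking through the maps $\iota$ only ever enlarges the tree as a set of boxes, even though entries change. This is exactly the statement that $\tree$ is a subtree of $\iota(\tree)$ in \cref{thm:jdt-forest-evolution-weak}, iterated $n-1$ times.

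Given the identity $z_\tree = z_{n(\tree)}$, the remaining items are formal.

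\ref{item:tree-directions-distinct} If $\tree\neq\tree'$, then $n(\tree)\neq n(\tree')$ by injectivity in \cref{prop:lifetime-bijection}. Hence $z_{\tree} = z_{n(\tree)} \neq z_{n(\tree')} = z_{\tree'}$ by \ref{item:pn-distinct}.

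\ref{item:tree-directions-equal-zn} The inclusion $\subseteq$ is immediate from $z_\tree = z_{n(\tree)}$. For $\supseteq$, given $n$, surjectivity in \cref{prop:lifetime-bijection} provides a tree $\tree$ with $n(\tree)=n$, and then $z_\tree = z_n$.

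\ref{item:tree-directions-dense} Density follows from \eqref{eq:ztreeset-equals-znset} combined with \ref{item:pn-dense}.

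The main obstacle is therefore concentrated in the first step, namely that subsets of $\origin_n$ inherit its asymptotic direction. That step is already handled by the uniform (all-but-finitely-many) form of the conclusion in the proof of \cref{lem:tree_uniform_limit}, transported to $T_n$ via \cref{lem:pn-preserved-by-jdt}. So I expect no genuine difficulty beyond carefully invoking the set-theoretic inclusion $\tree\subseteq\origin_{n(\tree)}$.
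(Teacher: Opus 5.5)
Your proposal is correct and follows essentially the paper's own argument. The paper likewise obtains $z_\tree = z_{n(\tree)}$ (Step~2 of \cref{thm:pn-trajectory-tree-bijection}) from the inclusion $\tree \subseteq \tree_{\max} = \origin_{n(\tree)}$ together with \cref{prop:tree_uniform_limit}, and then derives \ref{item:tree-directions-distinct}--\ref{item:tree-directions-dense} from the lifetime bijection of \cref{prop:lifetime-bijection} and from \ref{item:pn-distinct} and \ref{item:pn-dense}, exactly as you do. You are also right that the finiteness given by \cref{thm:max-extension} is not needed for this statement; the paper uses it only for the separate claim that a tail of $\gamma_{n(\tree)}$ lies inside $\tree$.
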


The key to establishing these properties is a bijective
correspondence between trees and trajectories, which we now develop.

\subsubsection{Proof via trajectory correspondence}

The following result establishes that each tree
corresponds to a unique jeu de taquin trajectory.

\begin{proposition}[Trajectories correspond to trees]
	\label{thm:pn-trajectory-tree-bijection}
	For each tree $\tree$ in $\jdtforest(T)$, its
	lifetime $n(\tree)$ is the unique positive integer $k$ such
	that the path $\gamma_k$, after removing a finite
	initial segment, lies entirely within $\tree$.
	
	Moreover, the asymptotic directions exist and coincide:
	\begin{equation} \label{eq:asymdirections-coincide}
	\asymdirect(\tree)  = \asymdirect(\gamma_{n(\tree)}) =
	z_{n(\tree)}
	\end{equation}
	for all trees $\tree$ in $\jdtforest(T)$.
\end{proposition}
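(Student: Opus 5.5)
Let $n = n(\tree)$. The plan is to obtain the asymptotic direction from the sandwich $\tree \subseteq \tree_{\max} = \origin_n$ with $\tree_{\max}\setminus\tree$ finite, and to get uniqueness of $k$ from the distinctness of the $z_k$.

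\emph{Asymptotic direction.} By \cref{thm:max-extension}, $\tree_{\max} \setminus \tree$ is finite, and $\tree \subseteq \tree_{\max} = \origin_n$. By \cref{prop:tree_uniform_limit}, $\origin_n$ has asymptotic direction $z_n$. Any infinite subset of a set with an asymptotic direction inherits that direction, since an enumeration of the subset with $x+y\to\infty$ is a subsequence-type enumeration of points of $\origin_n$ going to infinity. Every tree is infinite, so $\asymdirect(\tree) = z_n$.

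\emph{Existence of $k$.} We show that $k = n$ works, i.e.\ that a tail of $\gamma_n$ lies in $\tree$. By \cref{lem:trajectory_in_tree}, a tail $R$ of $\gamma_n$ is a trajectory of $T$ lying in a single tree $\tree'$ of $\jdtforest(T)$. Since $\gamma_n \subseteq \origin_n$, the set $R$ is infinite and contained in $\tree' \cap \origin_n$. From the previous paragraph applied to $\tree'$, we get $\asymdirect(\tree') = z_{n(\tree')}$. Since $\tree'$ contains $R$, which has direction $z_n$, it follows that $z_{n(\tree')} = z_n$. By \ref{item:pn-distinct}, $n(\tree') = n$. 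By \cref{prop:lifetime-bijection} (injectivity), $\tree' = \tree$. This proves existence, and also gives $\asymdirect(\gamma_{n(\tree)}) = z_{n(\tree)} = \asymdirect(\tree)$, which is \eqref{eq:asymdirections-coincide}.

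\emph{Uniqueness of $k$.} Suppose a tail of $\gamma_k$ lies in $\tree$. That tail is infinite with direction $z_k$, and it is contained in $\tree$, which has direction $z_n$. Hence $z_k = z_n$, so $k = n$ by \ref{item:pn-distinct}.

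The main subtlety is making sure the asymptotic direction of $\tree$ is computed directly from the inclusion in $\origin_n$, rather than circularly from trajectories. The finiteness statement \cref{thm:max-extension} is not even strictly needed for the direction; only the inclusion $\tree\subseteq\origin_n$ and the infinitude of $\tree$ are used.
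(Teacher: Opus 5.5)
Your proof is correct. The direction step and the uniqueness step are the same as the paper's Steps 2 and 3; the existence step takes a genuinely different route.

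\textbf{The paper's route.} It obtains existence directly from \cref{thm:max-extension}. Since $\gamma_k \subseteq \origin_k = \tree_{\max}$ and $\tree_{\max}\setminus\tree$ is finite, and the path visits each box at most once, a tail of $\gamma_k$ avoids that finite set and so lies in $\tree$.

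\textbf{Your route.} You bypass the finite-growth chain (\cref{thm:finite-growth}, \cref{thm:max-extension}) entirely. You take the tree $\tree'$ of $\jdtforest(T)$ that contains a tail of $\gamma_n$, given by \cref{lem:trajectory_in_tree}. You identify it through its asymptotic direction, $z_{n(\tree')} = z_n$. You then conclude $\tree' = \tree$ from \ref{item:pn-distinct} together with injectivity of the lifetime map. That injectivity rests only on \cref{thm:jdt-forest-evolution-weak}, so there is no circularity.

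\textbf{What each buys.} Given the corollary, the paper's argument is shorter. It also rests on the stronger structural fact that $\tree$ and $\origin_{n(\tree)}$ differ by only finitely many boxes. Yours needs only the inclusion $\tree\subseteq\origin_{n(\tree)}$ and the infinitude of trees. It also yields a slightly stronger conclusion: the tail of $\gamma_n$ is an actual trajectory of $T$ inside $\tree$, not merely a set of boxes contained in $\tree$. That is exactly the combination of \cref{lem:trajectory_in_tree} with this proposition that the paper later invokes in Step~1 of the proof of \cref{thm:pn-infinite-boundary-boxes}.
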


\begin{proof}
	Let $\tree$ be any tree in $\jdtforest(T)$, and let $k = n(\tree)$ denote
	its lifetime. We establish the claims about $k$ and its uniqueness
	in three steps.
	
	\medskip
	\noindent\textbf{Step 1: The path $\gamma_k$ lies in
		$\tree$ after a finite prefix.}
	The maximal extension $\tree_{\max} = \origin_k$ is the tree
	at the origin in $\jdtforest(T_k)$, which contains the jeu de taquin
	path $\gamma_k = \jdtpath^{T_k}_{0,0}$. By
	\cref{thm:max-extension}, the trees $\tree$ and $\tree_{\max}$
	differ by finitely many boxes. Therefore, after removing a
	finite initial segment, the path $\gamma_k$ is contained
	in $\tree$.
	
	\medskip
	\noindent\textbf{Step 2: The asymptotic directions coincide.}
	By \cref{prop:tree_uniform_limit}, 
	the asymptotic direction $\asymdirect(\origin_k)$
	exists and equals $z_k = \asymdirect(\gamma_k)$. 
	Since $\tree$ is a subset of $\tree_{\max}=\origin_k$,
	the asymptotic direction $\asymdirect(\tree)$ also exists
	and equals the same number $z_k = \asymdirect(\gamma_k)$; 
	this establishes \eqref{eq:asymdirections-coincide}.
	
	\medskip
	\noindent\textbf{Step 3: Uniqueness of $n(\tree)$.}
	Suppose that for some index $\ell$, the path
	$\gamma_\ell$ with a finite initial segment removed is
	contained in $\tree$. Then by the same reasoning as Step~2,
	\[
	\asymdirect(\tree) = 
	\asymdirect(\gamma_\ell)  =
	z_\ell.
	\]
	By Step~2, we also have
	$\asymdirect(\tree) = z_{n(\tree)}$. Therefore,
	$z_\ell = z_{n(\tree)}$. Since $T$ is Plancherel-normal, the values
	$z_1, z_2, z_3, \ldots$ are distinct, hence
	$\ell = n(\tree)$, establishing uniqueness.
\end{proof}

\subsubsection{Proof of \cref{prop:pn-tree-properties}}
We can now complete the proof of
\cref{prop:pn-tree-properties}.

\begin{proof}[Proof of \cref{prop:pn-tree-properties}]
	\emph{Property \ref{item:tree-has-direction}:} This follows
	immediately from \cref{thm:pn-trajectory-tree-bijection},
	which establishes that $\asymdirect(\tree) = z_{n(\tree)}$ exists
	for each tree $\tree$.
	
	\medskip
	\emph{Property \ref{item:tree-directions-distinct}:} If
	$\tree \neq \tree'$ are distinct trees, then by injectivity of
	the lifetime map (\cref{prop:lifetime-bijection}), we have
	$n(\tree) \neq n(\tree')$. By
	\cref{thm:pn-trajectory-tree-bijection},
	$z_{\tree} = z_{n(\tree)}$ and
	$z_{\tree'} = z_{n(\tree')}$. Since $T$ is Plancherel-normal, the values
	$z_1, z_2, z_3, \ldots$ are distinct, hence
	$z_{\tree} \neq z_{\tree'}$.
	
	\medskip
	\emph{Property \ref{item:tree-directions-equal-zn}:} By
	\cref{thm:pn-trajectory-tree-bijection}, $z_{\tree} = z_{n(\tree)}$
	for each tree $\tree$, so the left-hand side of
	\eqref{eq:ztreeset-equals-znset} is contained in the right-hand
	side. Conversely, by surjectivity of the lifetime map
	(\cref{prop:lifetime-bijection}), for each $n \geq 1$ there
	exists a tree $\tree$ with $n(\tree) = n$, whence $z_{\tree} = z_n$
	by the same correspondence. This proves the equality of sets
	\eqref{eq:ztreeset-equals-znset}.

	\medskip
	\emph{Property \ref{item:tree-directions-dense}:} Since $T$ is
	Plancherel-normal, the set $\{z_n : n \geq 1\}$ is dense in
	$[0,1]$. By \eqref{eq:ztreeset-equals-znset}, the same holds for
	the set of asymptotic directions of the trees, completing the
	proof.
\end{proof}

\subsection{Proofs of \cref{thm:main-result,thm:complete-inverse}}

\label{subsec:proof-thm-main-result}

By \cref{prop:ae-Plancherel-normal}, a Plancherel-distributed random
IYT $T$ is almost surely Plancherel-normal, so we may apply
\cref{prop:pn-tree-properties}. Its
property~\ref{item:tree-has-direction} yields the first statement, and
its property~\ref{item:tree-directions-equal-zn} yields the equality of
sets
\[
\left\{ \asymdirect(\tree) \,:\, \tree \textrm{ is a tree in }\jdtforest(T)
\right\}
= \{ z_1, z_2, \ldots \},
\]
which is the second statement. Finally, its
property~\ref{item:tree-directions-distinct} yields the third
statement.

\Cref{thm:complete-inverse} follows in the same way:
\cref{prop:lifetime-bijection} shows that the lifetime map is a
bijection onto $\N$, and \cref{thm:pn-trajectory-tree-bijection}
identifies the asymptotic direction of the tree of lifetime $n$
as~$z_n$.

\subsection{Trajectories repeatedly approach tree boundaries}

A box $(x,y) \in A$ is called a \emph{boundary box} of a set
$A \subseteq \N_0^2$ if $(x+1,y-1)$ or $(x-1,y+1)$ lies outside of $A$.

\begin{theorem}[Trajectories repeatedly approach tree boundaries]
	\label{thm:pn-infinite-boundary-boxes}
	For any starting position
	$(x,y) \in \N_0^2$, the jeu de taquin trajectory
	$\jdtpath^T_{x,y}$ passes through infinitely many boundary
	boxes of the tree $\tree$ of $\jdtforest(T)$ that contains it.
\end{theorem}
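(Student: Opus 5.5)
The plan is to argue by contradiction, using the lazy dynamics of \cref{sec:trajectories-trees-cusps} together with the density of tree directions from \cref{prop:pn-tree-properties}. Write $p[t]=\jdtpath^T_{x,y}[t]$ and let $\tree$ be the tree containing it, with $z_\tree=\asymdirect(\tree)$. Suppose that from some time $t_1$ on, every box visited by $p$ is an interior box of $\tree$, meaning both antidiagonal neighbours $(x\pm1,y\mp1)$ lie in $\tree$. By symmetry assume $z_\tree<1$; if $z_\tree=1$, run the same argument on the southeast side instead.

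\textbf{Step 1: comparison tree.} By \ref{item:tree-directions-dense} and \ref{item:tree-directions-distinct}, choose a tree $\tree'\neq\tree$ with $z_\tree<z_{\tree'}<z_\tree+\epsilon$. Pick a trajectory $q$ in $\tree'$. Exactly as in Step~3 of the proof of \cref{lem:tree_uniform_limit}, combining \cref{lem:trajs-convex-corners}, \cref{lem:convex-corners} and \cref{lem:order-preservation}, I expect to show that for all large $t$ we have $q[t]\prec p[t]$ strictly: $q[t]$ lies northwest of $p[t]$ and never merges with it. For each such $t$, list the convex corners of $\lambda^{(t)}$ lying between $q[t]$ and $p[t]$ in the order $\preceq$. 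These are finitely many, and by \cref{lem:order-preservation} the trajectories started from them stay sandwiched between $q$ and $p$.

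\textbf{Step 2: the corner adjacent to $p[t]$.} Let $r[t]$ be the convex corner of $\lambda^{(t)}$ immediately northwest of $p[t]$ among these corners. Since $q[t]\notin\tree$ lies in this list, at every large $t$ some consecutive pair of corners in the list has its two members in different trees. The key claim is that infinitely often this switch of trees happens at the pair $(r[t],p[t])$, i.e.\ $r[t]$ lies in a tree other than $\tree$. The mechanism is confluence: corners in $\tree$ have trajectories that merge with $p$ by \cref{lem:trajectories-merge}, and once merged they stay merged by \cref{lem:order-preservation}, so the corners of $\tree$ lying between $q$ and $p$ are repeatedly absorbed into $p$. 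After each absorption, the next corner northwest of $p[t]$ is either a fresh corner of $\tree$ or a corner of another tree. Combining this with the angular squeeze $z_\tree<z_{\tree'}<z_\tree+\epsilon$, and with the fact that $\epsilon$ is arbitrary, I would rule out $r[t]\in\tree$ for all large $t$.

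\textbf{Step 3: translating to a boundary box, the main obstacle.} Between consecutive convex corners $r[t]$ and $p[t]$ the boundary of $\lambda^{(t)}$ is a staircase with a single concave corner. The goal is to show that whenever $r[t]\notin\tree$, some box of $p$ within a bounded number of steps after time $t$ has an antidiagonal neighbour outside $\tree$. The first thing I would try is to look at the moment $p$ next steps north from $p[t]$ toward the concave corner, and to check that the northwest antidiagonal neighbour of the new box is a box of the staircase whose trajectory is sandwiched between $r$ and $p$. I would then need that this trajectory can merge with $r$ rather than with $p$, which would place the box outside $\tree$. This local staircase analysis, and especially ruling out the case where $p$ runs east indefinitely while $r$'s tree stays two diagonals away, is where I expect the real difficulty. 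The same finite-intersection and sandwich reasoning used in \cref{thm:trees-almost-disjoint} will be invoked to close that case.
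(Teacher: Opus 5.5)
Steps~2 and~3 carry the entire content of the theorem, and neither is actually argued. In Step~2 your only inputs are confluence and the angular squeeze $z_\tree<z_{\tree'}<z_\tree+\epsilon$. Asymptotic directions are macroscopic information, though. Nothing in them prevents $\tree$ from keeping, at every large time, a layer of its own boxes immediately northwest of $p[t]$ (say, a side branch that keeps feeding into $p$), with the first foreign corner a few lattice steps further out. Such a layer is invisible at the scale of asymptotic directions. Corners of $\tree$ absorbed into $p$ can be replenished as $\lambda^{(t)}$ grows, and letting $\epsilon\to0$ changes nothing. Step~3 is, as you acknowledge, unresolved.

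More fundamentally, your argument is one-sided. The contradiction hypothesis is never used, and you only look at the northwest side of $p$. If Steps~2--3 went through, they would show that every trajectory in a tree with $z_\tree<1$ visits infinitely many boundary boxes of type NW. Mirrored, they would give infinitely many of type SE when $z_\tree>0$. The paper records exactly this as an open problem in \cref{sec:final-remarks}. So the missing step is not a local staircase check but something at least as strong as an unsolved question.

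The idea you are missing is to use the jeu de taquin dynamics instead of the static geometry of $T$.

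\emph{Reduction.} The paper first reduces to $\gamma_1$ inside $\origin_1$. An arbitrary tree differs from its maximal extension $\origin_n$ by finitely many boxes (\cref{thm:max-extension}), its trajectories eventually agree with $\gamma_n$, and $T_n$ is again Plancherel-normal (\cref{lem:pn-preserved-by-jdt}).

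\emph{Flip boxes.} It then calls a box of $\origin_1$ a flip box if its outgoing edge in $\jdtforest(\jdt(T))$ leaves $\origin_1$. There are infinitely many flip boxes. Otherwise, the $\jdt(T)$-trajectory from a box of $\origin_1$ whose $\jdt(T)$-entry exceeds those of all flip boxes would stay in $\origin_1$ forever. But by \cref{prop:no-residue} it lies in $\iota(\tree)$ for some peripheral $\tree$, hence in $\iota(\tree)\setminus\tree\subseteq\origin_1\cap\origin_n$. That set is finite by \cref{prop:absorbed-boxes} and \cref{thm:trees-almost-disjoint}.

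\emph{Boundary boxes.} An outgoing edge can change only because one of the two neighbours lies on $\gamma_1$ and receives a larger entry. That neighbour is then a box of $\gamma_1$ whose antidiagonal neighbour lies outside $\origin_1$. It is of type NW or SE according to which neighbour it is.

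This mechanism gives no control over the type. That is exactly why it proves the theorem but not the one-sided statement your plan is aiming at.
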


\begin{proof}
	\noindent\textbf{Step 1: Reduction to paths $\gamma_n$.}
	The starting position $(x,y)$ belongs to some tree $\tree$ in
	the jeu de taquin forest of $T$. By the lifetime bijection
	(\cref{prop:lifetime-bijection}), we have
	$\tree_{\max} = \origin_n$ for some $n \geq 1$.
	
	By \cref{thm:max-extension}, the trees $\tree$ and
	$\tree_{\max}$ differ by finitely many boxes. Consequently,
	their sets of boundary boxes differ by at most finitely many
	elements. Therefore, it suffices to prove that
	$\tree_{\max} = \origin_n$ has infinitely many boundary boxes
	visited by the trajectory $\jdtpath^{T}_{x,y}$.
	
	By \cref{lem:trajectory_in_tree}, after a finite initial
	segment the path $\gamma_n$ follows the forest edges of
	$T$ and lies within a single tree of $\jdtforest(T)$; by
	\cref{thm:pn-trajectory-tree-bijection}, this tree is $\tree$,
	the tree of lifetime $n$. Two trajectories of $\jdtforest(T)$
	lying in the same tree eventually merge, so $\jdtpath^{T}_{x,y}$
	and $\gamma_n$ visit the same boxes after removing finite
	initial segments. Therefore, it suffices to prove that for each
	$n \geq 1$, the path $\gamma_n$ passes through infinitely
	many boundary boxes in $\origin_n$.
	
	\medskip
	\noindent\textbf{Step 2: Reduction to the case $n=1$.}
	By \cref{lem:pn-preserved-by-jdt}, the tableau
	$T_n = \jdt^{n-1}(T)$ is itself Plancherel-normal, with sliding
	paths $\gamma_{n+k}$ of asymptotic directions $z_{n+k}$. The
	argument is therefore identical for all $n$, and it suffices to
	prove the result for $n = 1$. That is, we prove
	that $\gamma_1 = \jdtpath^{T}_{0,0}$ passes through infinitely
	many boundary boxes in $\origin_1$.
	
	\medskip
	\noindent\textbf{Step 3: Existence of infinitely many flip
		boxes.}
	We compare the outgoing connectivity of boxes in $\origin_1$
	in two different jeu de taquin forests. Call a box
	$(x,y) \in \origin_1$ a \emph{flip box} if its outgoing edge
	in the forest of $T_2 = \jdt(T)$ points to a box that was
	peripheral in $T$ (i.e., not in $\origin_1$).
	
	We claim that there exist infinitely many flip boxes. Suppose
	for contradiction that there are only finitely many flip
	boxes. Let $M$ denote the maximum entry $(T_2)_{x,y}$ of the
	tableau $T_2$ taken over the finitely many flip boxes $(x,y)$.

	Since $\origin_1$ is infinite, there exists a box
	$(x,y) \in \origin_1$ with entry $(T_2)_{x,y} > M$. Follow the
	jeu de taquin trajectory $\jdtpath^{T_2}_{x,y}$ in the tableau $T_2$
	from $(x,y)$; the entries of $T_2$ increase along the trajectory, so
	every box of the trajectory has $T_2$-entry exceeding $M$ and is
	therefore not a flip box. Starting from $(x,y) \in \origin_1$,
	an induction now shows that the trajectory remains within $\origin_1$:
	any box of the trajectory lying in $\origin_1$ is a non-flip box, so
	its outgoing edge in the forest of $T_2$ points within
	$\origin_1$, placing the next box of the trajectory in $\origin_1$ as
	well. Therefore, this trajectory remains within $\origin_1$
	indefinitely.

	This is impossible. The trajectory $\jdtpath^{T_2}_{x,y}$ is an
	infinite jeu de taquin trajectory in $T_2 = \jdt(T)$, so all of its
	boxes belong to a single tree $\tree'$ of $\jdtforest(T_2)$. By
	\cref{prop:no-residue}, $\tree' = \iota(\tree)$ for some
	peripheral tree $\tree$ of $T$. Since $\tree$ is peripheral, it
	is disjoint from the tree $\origin_1$ at the origin; as the trajectory
	lies in $\origin_1$, it avoids $\tree$ entirely, so that
	\[
	\jdtpath^{T_2}_{x,y} \subseteq \tree' \setminus \tree
	= \iota(\tree) \setminus \tree.
	\]
	By \cref{prop:absorbed-boxes}, the set
	$\iota(\tree) \setminus \tree$ is contained in
	$\origin_1 \cap \origin_n$ for some $n \geq 2$, which is finite
	by \cref{thm:trees-almost-disjoint}. Thus the infinite trajectory
	$\jdtpath^{T_2}_{x,y}$ would be contained in a finite set, a
	contradiction.

	Therefore, there are infinitely many flip boxes in $\origin_1$.
	
	\medskip
	\noindent\textbf{Step 4: Flip boxes imply boundary box
		visits.}
	We show that each flip box in $\origin_1$ has a neighboring
	boundary box that $\gamma_1$ visits. Consider a flip box
	$(x',y') \in \origin_1$. By definition, its outgoing edge in
	the forest of $T_2$ points to a box outside $\origin_1$; in the
	forest of $T$, by contrast, its outgoing edge stays within its
	own tree $\origin_1$.
	
	The outgoing edge from $(x',y')$ is determined by comparing
	the entries of its two neighbors: north $(x', y'+1)$ and east
	$(x'+1, y')$. The edge points to whichever neighbor has the
	smaller entry. For the outgoing edge to change direction, one
	of the two neighbors must have had its entry modified by the
	jeu de taquin sliding operation. This neighbor must therefore
	lie on the jeu de taquin path $\gamma_1$.
	
	Without loss of generality, suppose the eastern neighbor
	$(x'+1, y')$ lies on $\gamma_1$. (The case where the northern
	neighbor lies on $\gamma_1$ is symmetric, being its mirror image
	under the NE--SW diagonal.) Set $(x,y) = (x'+1, y')$; we claim
	that $(x,y)$ is a boundary box of $\origin_1$ visited by
	$\gamma_1$. The geometric configuration is illustrated in
	\cref{fig:boundary-boxes}, where the flip box is marked with a
	red square and the boundary box with $\times$.

	Since $(x,y)$ lies on $\gamma_1$, it belongs to $\origin_1$. It
	remains to show that its northwestern neighbor
	$(x-1, y+1) = (x', y'+1)$ lies outside $\origin_1$. After the
	slide, the outgoing edge of the flip box $(x',y')$ points
	outside $\origin_1$, to whichever of its northern neighbor
	$(x', y'+1)$ and eastern neighbor $(x,y)$ has the smaller entry.
	As $(x,y) \in \origin_1$, this edge cannot point east; hence it
	points north, to $(x', y'+1)$, which therefore lies outside
	$\origin_1$. (This is the transition from an eastward to a
	northward edge depicted in \cref{fig:boundary-boxes}.)
	Consequently $(x,y) \in \origin_1$ while
	$(x-1, y+1) \notin \origin_1$, so $(x,y)$ is a boundary box of
	$\origin_1$.

	Each flip box $(x',y')$ thus determines an adjacent boundary
	box of $\origin_1$ lying on $\gamma_1$ (namely $(x'+1,y')$ or
	$(x',y'+1)$). A given boundary box arises in this way from at
	most two flip boxes---its western and its southern
	neighbor---so this assignment is at most two-to-one. Since
	there are infinitely many flip boxes, we conclude that
	$\gamma_1$ passes through infinitely many boundary boxes of
	$\origin_1$.
\end{proof}

\begin{figure}[t]
	\centering
	\subfloat[]{%
		\label{fig:boundary-boxes-a}%
		\subfile{figures/FIGURE-boundary_box-before.tex}%
	}\hfill
	\subfloat[]{%
		\label{fig:boundary-boxes-b}%
		\subfile{figures/FIGURE-boundary_box-after.tex}%
	}

	\caption{A jeu de taquin slide at a boundary box $(x,y)$ (large
		$\times$) and its \emph{flip box} \mbox{$(x-1,y)$} (red
		square), illustrating \cref{thm:pn-infinite-boundary-boxes}.
		The flip box's outgoing edge is red because it changes
		direction across the slide: it points
		\protect\subref{fig:boundary-boxes-a} east, into the origin
		tree $\origin_1$ (light blue), before the slide, and
		\protect\subref{fig:boundary-boxes-b} north afterwards, out of
		$\origin_1$ towards $(x-1,y+1)$ (striped yellow, a neighbouring
		peripheral tree marked $\emptyset$): the origin tree dissolves
		here and $(x,y)$ is left unpainted, its membership
		undetermined. The blue snake is the jeu de taquin path through
		$(x,y)$, drawn vaguely as its entry and exit sides are
		undetermined; the complementary north${}\to{}$east case is the
		mirror image under the NE--SW diagonal.}
	
	\label{fig:boundary-boxes}
\end{figure}

\section{Open problems}
\label{sec:final-remarks}

In this final section we collect a number of numerical observations and open
problems
suggested by the
jeu de taquin forest: the point process of tree cusps and its conjectural
coalescing-flow scaling limit, the level repulsion of the asymptotic
directions, the hierarchical tree-of-trees structure, a random permutation
of $\N$ comparing two orderings of the trees, and the clustering of
boundary-box visits along a trajectory.

\subsection{The point process of tree cusps}

By \cref{thm:cusp-in-tree}, every tree $\tree$ of the jeu de taquin
forest carries a well-defined \emph{cusp} $\cusp(\tree)$, the
$\trianglelefteq$-minimal box of $\tree$. For a Plancherel-random IYT this
produces a random point process $\{\cusp(\tree)\}$, indexed by the trees of
the forest, whose statistics --- intensity, correlations, and scaling limit
--- it would be interesting to describe (\cref{fig:cusp-forest}). We collect
here a few numerical observations, which we leave as open problems.

\begin{figure}[t]
	\subfile{figures/FIGURE-jdt_competition_LSVK_ultra2-cusps.tex}
	\caption{The point process of tree cusps for a large
		Plancherel-random tableau, the same realization as in
		\cref{fig:jdt-forest}. Each tree occupies a thin
		wedge-shaped region extending toward its asymptotic
		direction, and the cusp of a tree --- its
		$\trianglelefteq$-minimal box, equivalently its
		minimum-entry box (\cref{cor:cusp-min-entry}) --- is the
		corner where its region tapers to a point on the side of the
		origin. The wedge colours are subdued and a bold red cross
		marks each cusp, so that the cusps stand out; they grow
		sparser away from the origin. The wedges visibly crowd
		together near the two coordinate axes, reflecting the
		concentration of asymptotic directions discussed in the
		text.}
	\label{fig:cusp-forest}
\end{figure}

We use the RSK-polar coordinates $(\rho, \alpha)$ of \cref{sec:LSVK-curve}.

First, the cusps grow sparser away from the origin. We call the
smallest entry $\min \tree$ occurring in a tree $\tree$ --- by
\cref{cor:cusp-min-entry}, the entry at its cusp box --- the
\emph{bifurcation time} of the tree. In all Monte Carlo
experiments of this section, a \emph{realization} is a
Plancherel-random IYT restricted to its first $n = 10^6$ boxes,
that is, to the Young diagram $\lambda^{(n)}$ of \cref{sec:intro};
such a realization exhibits precisely the cusps with bifurcation
time at most $n$, so the truncation size $n$ acts as a threshold
on the bifurcation time. The number of cusps with
bifurcation time at most $t$ appears to grow like $K\,t^{1/4}$ as
$t \to \infty$, for some absolute constant $K \approx 1.87$ whose
value we do not determine (\cref{fig:cusp-growth}).

%	EASTER EGG: the conjectural value of K is approximately
%	K = 1.86837 22169 05414 14351 97445 73373 77269 14483
%	32194 97193 28399 88660 67471 83563 41340 02490
%	85495 81062 74568 27215

\begin{figure}[t]
	\centering
	% Generated by cutoff_sweep_tikz_export.py; see the header of
	% figures/cutoff-sweep.tex for the data source and regeneration steps.
	\subfile{figures/cutoff-sweep.tex}
	\caption{The number of cusps with bifurcation time at most $t$, per
		realization, against the radial coordinate $\sqrt[4]{t}$ (solid;
		Plancherel realizations with $n = 10^6$ boxes), compared with a straight line
		of slope $K \approx 1.87$ (dashed). The count grows linearly in
		$\sqrt[4]{t}$, the data lying a near-constant offset below the line
		(a finite-size correction).}
	\label{fig:cusp-growth}
\end{figure}

Second, the asymptotic directions concentrate near the two coordinate axes
$z = 0$ and $z = 1$, with a limiting law that appears singular there
(\cref{fig:cusp-direction-cdf}); although \cref{prop:pn-tree-properties}
shows the set of asymptotic directions to be dense in $[0,1]$, we do not
determine this law. This is not in tension with the input sequence
$z_1,z_2,\ldots$ being i.i.d.\ uniform on $[0,1]$: by
\cref{thm:main-result} every tree's asymptotic direction is one of the
values $z_n$ and every $z_n$ is realized. But \cref{fig:cusp-direction-cdf}
records only the cusps inside a bounded window, ordered by bifurcation time
--- a geometrically selected, and hence biased, subsample rather than the
full i.i.d.\ sequence.

\begin{figure}[t]
	\centering
	% Generated by cusp_direction_cdf_export.py; see the header of
	% figures/cusp-direction-cdf.tex for the data source and regeneration.
	\subfile{figures/cusp-direction-cdf.tex}
	\caption{Empirical cumulative distribution of the asymptotic directions
		$z \in [0,1]$ of the cusps, in radial shells $\sqrt[4]{t} \in [5,30]$
		(Plancherel realizations with $n = 10^6$ boxes). The curves rise
		steeply near
		the two axes $z = 0, 1$ and flatten in between --- the signature of a
		limiting direction law that concentrates at, and is singular at, the
		axes. Although the input values $z_n$ are i.i.d.\ uniform, this
		finite, geometrically selected sample of cusps is biased relative to
		the whole sequence.}
	\label{fig:cusp-direction-cdf}
\end{figure}

The picture as a whole --- the cusps as the points where the competition
interfaces, drawn inward from spatial infinity, merge --- is reminiscent of a
system of coalescing Brownian motions, in the same universality class as the
exactly solvable P\'olya web~\cite{BSTU2026polyaWeb}. Making this precise,
and with it the exact growth constant $K$ and the limiting angular law, is an
open problem.

\subsection{Level repulsion of the asymptotic directions}
\label{subsec:level-repulsion}

Beyond the global angular law, the asymptotic directions appear to
\emph{repel}. Fixing a threshold on the bifurcation time, forming
the spacings between consecutive directions, normalizing them by
the empirical mean density of the directions (estimated from all
realizations, to compensate for its non-uniformity), and
restricting to a bulk window of directions away from the two
axes, one finds in
simulations a limiting spacing law whose density vanishes ---
linearly --- as the spacing tends to $0$, conspicuously unlike the
exponential law $e^{-s}$ of independent (Poisson) points
(\cref{fig:rayleigh-spacing}). Identifying this limiting spacing
law is another open problem.

\begin{figure}[t]
	\centering
	% Generated by rayleigh_spacing_tikz_export.py; see the header of
	% figures/rayleigh-spacing.tex for the data source and regeneration.
	\subfile{figures/rayleigh-spacing.tex}
	\caption{Histogram of the unfolded nearest-neighbor spacings
		between consecutive asymptotic directions: each spacing is
		normalized by the empirical mean density of the
		directions, estimated from all realizations, and only
		spacings in the bulk window $0.1 \leq z \leq 0.9$ are kept
		($204{,}139$ spacings pooled from $5{,}743$ Plancherel
		realizations with $n = 10^6$ boxes, rescaled to mean $1$). The histogram is
		shown against the Rayleigh density
		$\tfrac{\pi}{2}\,s\,e^{-\pi s^2/4}$ (solid) and the
		exponential density $e^{-s}$ of independent directions
		(dashed) for comparison. The linear vanishing as
		$s \to 0$, absent from the exponential, is the signature
		of level repulsion.}
	\label{fig:rayleigh-spacing}
\end{figure}

\subsection{The tree of trees and its structure}

The boundaries separating neighboring trees of the jeu de taquin forest,
together with the two coordinate axes, organize the forest into a
hierarchical structure. Crossing a large arc from the $x$-axis to the
$y$-axis, one meets the trees in the order of their asymptotic
directions; moving inward toward the origin, trees terminate at their
cusps and the boundaries flanking a vanishing tree merge. Iterating this
merging, the boundaries appear to form the complete infinite binary
tree, rooted at the origin $(0,0)$: its nodes are the cusps --- one per
tree, by \cref{thm:cusp-in-tree} --- its edges are the tree
boundaries, and its ends correspond to the asymptotic directions, which
are dense in $[0,1]$ by \cref{prop:pn-tree-properties}. We have not made
the notion of tree boundary precise here, and we record this
binary-tree structure as a heuristic rather than a theorem
(\cref{fig:competition-tree}).

\begin{figure}[t]
	\centering
	\subfile{figures/binary-tree-competition.tex}
	\caption{The tree of trees of the same Plancherel-random tableau as in
		\cref{fig:jdt-forest}, drawn in
		the coordinates $(\sqrt[4]{\min \tree},\, z)$: the horizontal
		coordinate of a node is the fourth root of its bifurcation
		time $\min \tree$ and its vertical coordinate is the
		asymptotic direction $z$ of the corresponding tree. Each node is a
		cusp --- equivalently, a bifurcation point at which a unique tree
		of the forest originates --- and the edges are the boundaries
		between neighboring trees, exhibiting the binary-tree structure. A
		node is labelled $z_n$, where $n$ is the lifetime of its tree.}
	\label{fig:competition-tree}
\end{figure}

In this description the combinatorial type of the tree is deterministic
--- it is always the infinite binary tree --- while all the randomness
is carried by the embedding into the quarter-plane: each node is placed
at the cusp of its tree, so that the cusp point process is exactly the
image of the node set under this
embedding. The two viewpoints, geometric and combinatorial, thus
describe a single object. The full ``wired'' tree carries more
information than its nodes alone: the scaling limit of the boundaries
themselves, rather than only of the cusps, should be the genealogy of a
system of coalescing Brownian motions, once more in agreement with the
P\'olya web~\cite{BSTU2026polyaWeb}, whose complementary boundary tree
plays precisely this role.

\subsection{An infinite random permutation of $\N$}
\label{subsec:random-permutation}

The trees of the jeu de taquin forest can be enumerated in two natural
ways, and comparing the enumerations produces a random permutation of
$\N$.

On one hand, \cref{prop:lifetime-bijection} assigns to each tree $\tree$
its \emph{lifetime} $n(\tree) \in \N$, and by \cref{thm:main-result} and
\cref{thm:pn-trajectory-tree-bijection} the tree of lifetime $n$ has
asymptotic direction $z_n$; ordering the trees by lifetime is therefore
the same as ordering them by the index of the input value they realize.
On the other hand, the cusps furnish a purely geometric ordering:
list the trees by increasing bifurcation time $\min \tree$. These
values are distinct, so this is a genuine total order. Reading
off the lifetime of the tree placed $k$-th in the cusp order
defines a map
\[
	\sigma \colon \N \to \N,
\]
a bijection because the two orderings enumerate the same set of trees.
The value $\sigma(1) = 1$ is forced: the tree of smallest
bifurcation time is the one at the origin (it contains the box
$(0,0)$ with entry $1$), and the tree at the origin has
lifetime $1$.

For the running example --- the tableau of \cref{fig:jdt-forest},
displayed as a tree of trees in \cref{fig:competition-tree} --- reading
the nodes from left to right, that is, by increasing bifurcation
time $\min \tree$, and
recording the lifetime $n$ of each labelled node $z_n$ yields the first
values
%
% Computed from the seed123 annotated-forest data by
% scripts/compute_sigma_permutation.py in the companion trees-SageMath
% repository; the same data underlies \cref{fig:competition-tree}.
\[
	\sigma = (1, 2, 3, 7, 13, 14, 37, 21, 5, 16, 34, 42, 9, 56, 24,
	\dots).
\]
Already this short prefix is visibly irregular, and the lifetimes that
appear do not form an initial segment of $\N$: a tree of small
lifetime can have a large bifurcation time, having been born early
but coming to rest far from the origin.

The permutation $\sigma$ is a combinatorial fingerprint of the inverse
RSK map, measuring how far the geometric arrangement of the cusps
departs from the order in which the trees are born. In simulations it
appears highly irregular, and we know of no simple description of its
law; its displacement statistics $|\sigma(k) - k|$ and its cycle
structure would be interesting to understand. Unlike the cusp point
process and the tree structure above, $\sigma$ has no counterpart in the
P\'olya web, which possesses cusps and a cusp order but no notion of
input index: it is genuinely a feature of the RSK correspondence.

\subsection{Clustering of boundary-box visits}

\cref{thm:pn-infinite-boundary-boxes} shows that, almost surely, every
jeu de taquin trajectory passes through infinitely many boundary boxes
of the tree containing it. Each boundary box is of one of two types,
according to which diagonal neighbor leaves the tree: call it of type
NW if $(x-1, y+1)$ lies outside the tree, and of type SE if
$(x+1, y-1)$ does. It remains open whether a trajectory must visit
infinitely many boxes of \emph{each} type, or whether it might visit
only finitely many of one type while visiting infinitely many of the
other.

Numerical experiments suggest a more detailed picture. Both types do
seem to occur infinitely often, but the visits are strongly
\emph{clustered}: the trajectory runs for a long stretch alongside one
side of its tree before crossing to the other. Counting the transitions
between the two types along a trajectory truncated to the window
$\{x + y < n\}$, the number of transitions appears to grow very slowly
--- roughly like the square root of the number of boundary-box visits,
and hence like a small power of $n$. Making this growth precise, and
identifying the limiting law of the alternation pattern, is another open
problem. Here too the P\'olya web should provide a tractable setting, in
which the joint behaviour of a tree's two bounding boundaries and the
trajectory running between them could be analyzed, although
\cite{BSTU2026polyaWeb} does not address this question.

\section*{Acknowledgments}

P.~Śniady was supported by the National Science Centre, Poland,
grant~2025/59/B/ST1/01258.

\section*{Data and code availability}

The software that produces the figures and the Monte Carlo statistics of
this paper is openly archived at
\href{https://doi.org/10.5281/zenodo.21361261}{doi:10.5281/zenodo.21361261}
(Apache-2.0); the accompanying dataset of Plancherel-random geodesic trees
is archived at
\href{https://doi.org/10.5281/zenodo.21360601}{doi:10.5281/zenodo.21360601}
(CC-BY-4.0).

\biblio

\end{document}

%% file: figures/FIGURE-iyt.tex
\begin{tikzpicture}[visible node/.style={font=\large}]
	% --- truncated grid + entries: clip a large grid so its NE corner is cut
	%     at (5.2,5.2); the boxes run off the top and right -> extends onward
	\begin{scope}
		\clip (0,0) rectangle (5.2,5.2);
		\draw (0,0) grid (9,9);
		\input{autogenerated/jdt-slide-05x05-bare-content.tex}
	\end{scope}

	% --- ellipses (unclipped): the tableau continues to infinity ---
	\node at (5.6,0.5) {$\cdots$};   % rightward along the rows
	\node at (0.5,5.6) {$\vdots$};   % upward along the columns
	\node at (5.6,5.6) {$\iddots$};  % diagonally to infinity

	% --- coordinate axes (French convention), unclipped: arrows + ticks + nums
	\draw[very thick, ->] (0,0) -- (6.4,0) node[anchor=west] {$x$};
	\draw[very thick, ->] (0,0) -- (0,6.4) node[anchor=south] {$y$};
	\foreach \i in {0,...,5} {
		\draw[thick] (\i,0) -- (\i,-0.2) node[anchor=north] {\small$\i$};
		\draw[thick] (0,\i) -- (-0.2,\i) node[anchor=east] {\small$\i$};
	}
\end{tikzpicture}

%% file: figures/jdt-slide-09x09-before.tex
% Grid style: white ultra thick lines
\renewcommand{\gridcommand}{\draw[white,ultra thick] (0,0) grid (10,10);}

% Load color scheme keyed by underlying input index (z_i)
\input{autogenerated/_jdt-slide-color-scheme.tex}

\begin{tikzpicture}[scale=1,
	competition interface/.style={draw=blue},
	edge/.style={very thick,->},
	% Changed edges and flip boxes are not highlighted in the paper:
	% 'edge changed' renders like a normal edge, 'highlight box' is
	% invisible.
	edge changed/.style={very thick,->},
	jdt path/.style={line width=8pt, blue, opacity=0.35,
		line cap=round, rounded corners=2pt},
	highlight box/.style={draw=none, minimum size=0.6cm, inner sep=0pt},
	boundary/.style={black},
	invisible node/.style={circle, draw=none, fill=none,
		minimum size=0.5cm, inner sep=0pt},
	visible node/.style={font=\small},
	annotation/.style={rotate=45, font=\tiny}
]

	% Restrict the visible/measured region to the meaningful window.
	\clip (-1,-1) rectangle (12,12);

	% Main content: colored regions, values, jdt path, and forest edges
	\begin{scope}
		\subfile{autogenerated/jdt-slide-09x09-before-content.tex}
	\end{scope}

	% Coordinate axes with arrows
	\draw[very thick, ->] (0,0) -- (11,0) node[anchor=west] {$x$};
	\draw[very thick, ->] (0,0) -- (0,11) node[anchor=south] {$y$};

	% X-axis tick marks and labels
	\foreach \x in {0,...,10} {
		\draw[thick] (\x,0) -- (\x,-0.2) node[anchor=north] {\tiny$\x$};
	}

	% Y-axis tick marks and labels
	\foreach \y in {0,...,10} {
		\draw[thick] (0,\y) -- (-0.2,\y) node[anchor=east] {\tiny$\y$};
	}

	% Boundary annotations ($z_i$ circles), one per tree region
	\subfile{autogenerated/jdt-slide-09x09-before-annotations.tex}

\end{tikzpicture}

%% file: figures/FIGURE-jdt_competition_LSVK_ultra2-full.tex
\renewcommand{\gridcommand}{}

% The colour scheme is ~730 \colorlet lines.  Loaded normally inside an
% \hbox (as \resizebox creates) each line-ending newline becomes an
% interword space, inflating the box width by thousands of points and
% shrinking the drawing to a speck.  \endlinechar=-1 suppresses those
% end-of-line spaces; it is set without a group so the (locally scoped)
% \colorlet definitions remain visible to the picture below.
\endlinechar=-1\relax
\input{autogenerated/_common_color_scheme.tex}%
\endlinechar=13\relax

\begin{tikzpicture}[scale=0.18,
competition interface/.style={draw=blue},
boundary/.style={thick, black},
diagram boundary/.style={draw=none}, % LSVK limit-shape curve dropped
jdt path/.style={draw=none}, % trajectory overlay suppressed
truncate/.style={opacity=0}, % truncation removed: full forest shown
annotation/.style={rotate=45, font=\tiny}
]

	\begin{scope}
		\subfile{autogenerated/jdt_tree_interface_LSVK_ultra2.tex}
	\end{scope}

	% Draw main axes with arrows
	\draw[very thick, ->] (0,0) -- (55,0) node[anchor=west] {$x$};
	\draw[very thick, ->] (0,0) -- (0,55) node[anchor=south] {$y$};

	% X-axis tick marks and labels
	\foreach \x in {10,20,...,50} {
		\draw[thick] (\x,0) -- (\x,-1) node[anchor=north] {$\x0$};
	}
	\draw[thick] (0,0) -- (0,-1) node[anchor=north] {$0$};

	% Y-axis tick marks and labels
	\foreach \y in {10,20,...,50} {
		\draw[thick] (0,\y) -- (-1,\y) node[anchor=east] {$\y0$};
	}
	\draw[thick] (0,0) -- (-1,0) node[anchor=east] {$0$};

\end{tikzpicture}

%% file: figures/jdt-slide-09x09-after.tex
% Grid style: white ultra thick lines
\renewcommand{\gridcommand}{\draw[white,ultra thick] (0,0) grid (10,10);}

% Load color scheme keyed by underlying input index (z_i)
\input{autogenerated/_jdt-slide-color-scheme.tex}

\begin{tikzpicture}[scale=1,
	competition interface/.style={draw=blue},
	edge/.style={very thick,->},
	% Changed edges and flip boxes are not highlighted in the paper:
	% 'edge changed' renders like a normal edge, 'highlight box' is
	% invisible.
	edge changed/.style={very thick,->},
	jdt path/.style={line width=8pt, blue, opacity=0.35,
		line cap=round, rounded corners=2pt},
	highlight box/.style={draw=none, minimum size=0.6cm, inner sep=0pt},
	boundary/.style={black},
	invisible node/.style={circle, draw=none, fill=none,
		minimum size=0.5cm, inner sep=0pt},
	visible node/.style={font=\small},
	annotation/.style={rotate=45, font=\tiny}
]

	% Restrict the visible/measured region to the meaningful window.
	\clip (-1,-1) rectangle (12,12);

	% Main content: colored regions, slid values, jdt path, and forest edges
	\begin{scope}
		\subfile{autogenerated/jdt-slide-09x09-after-content.tex}
	\end{scope}

	% Coordinate axes with arrows
	\draw[very thick, ->] (0,0) -- (11,0) node[anchor=west] {$x$};
	\draw[very thick, ->] (0,0) -- (0,11) node[anchor=south] {$y$};

	% X-axis tick marks and labels
	\foreach \x in {0,...,10} {
		\draw[thick] (\x,0) -- (\x,-0.2) node[anchor=north] {\tiny$\x$};
	}

	% Y-axis tick marks and labels
	\foreach \y in {0,...,10} {
		\draw[thick] (0,\y) -- (-0.2,\y) node[anchor=east] {\tiny$\y$};
	}

	% Boundary annotations ($z_i$ circles), one per tree region
	\subfile{autogenerated/jdt-slide-09x09-after-annotations.tex}

\end{tikzpicture}

%% file: figures/FIGURE-curvilinear.tex
	
	% baseline at the origin so the x-axis (y=0) aligns with panel (b)
	\begin{tikzpicture}[scale=3, baseline={(0,0)}]
		\coordinate (origin) at (0,0);
		
		\pgfmathsetmacro{\xpoint}{0.79546}
		\pgfmathsetmacro{\ypoint}{0.49243}

		% Clipped filled region
		\begin{scope}	
			\clip (0,0) -- (\xpoint, \ypoint) -- (2.1,0);	
			\filldraw[fill=Pastel1-C, draw=none, smooth] 
			(origin) -- plot file {vershik-kerov-FR.dat} -- cycle;
		\end{scope}		

		\draw[help lines, dashed] (0,0) grid (2.2,2.2);
		
		% Main curve
		\draw[ultra thick, Set1-B, smooth] plot file {vershik-kerov-FR.dat};
		
		% French coordinate system (xy) - original axes
		\draw[->] (-0.1,0) -- (2.2,0) node[right] {$x$};
		\draw[->] (0,-0.1) -- (0,2.2) node[above] {$y$};
		
		% Russian coordinate system (uv) axes
		% u-axis: direction (1,-1), passing through origin
		% v-axis: direction (1,1), passing through origin
		\draw[->, red, thick] (-0.1,0.1) -- (1.2,-1.2) node[right,red!50!black] {$u$};
		\draw[->, red, thick] (-0.1,-0.1) -- (1.5,1.5) node[above right,red!50!black] {$v$};
		
		% X-axis ticks and labels (French system)
		\foreach \x in {1,2} {
			\draw (\x,0) -- (\x,-0.05) node[below] {$\x$};
		}
		
		% Y-axis ticks and labels (French system)
		\foreach \y in {1,2} {
			\draw (0,\y) -- (-0.05,\y) node[left] {$\y$};
		}
		
		% U-axis ticks and labels (Russian system)
		% For u = k on u-axis (v = 0): x = k/2, y = -k/2
		\foreach \u in {1,2} {
			\pgfmathsetmacro{\xtick}{\u/2}
			\pgfmathsetmacro{\ytick}{-\u/2}
			\draw[red] (\xtick,\ytick) -- (\xtick-0.05,\ytick-0.05) 
			node[below left, red] {$\u$};
		}
		
		% V-axis ticks and labels (Russian system)  
		% For v = k on v-axis (u = 0): x = k/2, y = k/2
		\foreach \v in {1,2} {
			\pgfmathsetmacro{\xtick}{\v/2}
			\pgfmathsetmacro{\ytick}{\v/2}
			\draw[red] (\xtick,\ytick) -- (\xtick-0.05,\ytick+0.05) 
			node[above left, red] {$\v$};
		}
		
		% Optional: Add minor ticks for French system
		\foreach \x in {0.5,1.5} {
			\draw (\x,0) -- (\x,-0.025);
		}
		\foreach \y in {0.5,1.5} {
			\draw (0,\y) -- (-0.025,\y);
		}
		
		% Key point marking
		\draw[dashed] (0,0) -- (\xpoint,\ypoint);
		\fill[Paired-D,draw=black] (\xpoint,\ypoint) circle (0.03);
		\node[above right=2.5pt, fill=white, inner sep=1pt] at (\xpoint,\ypoint) 
		{\textcolor{Paired-D!50!black}{\ $(\operatorname{RSKcos} z, \operatorname{RSKsin} z)$}};
		
		% Parameter label: the green curvilinear triangle has area = z
		% (Lemma thm:curvilinear-triangle), so label the region accordingly.
		\node at (0.92, 0.16) {\small area $= z$};

	\end{tikzpicture}
	

%% file: figures/FIGURE-rsk-polar.tex
\tikzset{
	rhoone/.style  ={Set1-B, very thick, smooth},       % rho = 1 (emphasized)
	rhoint/.style  ={Set1-B!80, semithick, smooth},     % integer rho, solid
	rhohalf/.style ={Set1-B!55, thin, densely dashed, smooth}, % half-int, dashed
	raystyle/.style={Set1-E!85, thin},
	alphalab/.style={Set1-E!60!black},
}

% Ray endpoints: the (RSKcos, RSKsin) values above, times 8 (so each ray
% extends past the clipping box and is then clipped).
\def\raylist{%
	11.236/1.081, 8.746/2.283, 6.774/3.612, 5.093/5.093,%
	3.612/6.774, 2.283/8.746, 1.081/11.236}

% scale=3 and baseline at the origin, matching panel (a)
\begin{tikzpicture}[scale=3, baseline={(0,0)}]

	% ----- curvilinear grid, clipped to the window [0,2.2]x[0,2.2] -----
	\begin{scope}
		\clip (-0.03,-0.03) rectangle (2.23,2.23);

		% alpha = const : straight rays from the origin (equally spaced alpha)
		\foreach \rx/\ry in \raylist {
			\draw[raystyle] (0,0) -- (\rx,\ry);
		}

		% rho = const, half-integer values: dashed
		\foreach \r in {0.5,1.5,2.5,3.5} {
			\begin{scope}[scale=\r]
				\draw[rhohalf] plot file {vershik-kerov-FR.dat};
			\end{scope}
		}

		% rho = const, integer values >= 2: solid
		\foreach \r in {2,3} {
			\begin{scope}[scale=\r]
				\draw[rhoint] plot file {vershik-kerov-FR.dat};
			\end{scope}
		}

		% emphasized unit curve rho = 1 (the LSVK limit shape), solid
		\draw[rhoone] plot file {vershik-kerov-FR.dat};
	\end{scope}

	% ----- French coordinate axes (drawn over the grid) -----
	% IDENTICAL axis style and length to panel (a) / FIGURE-curvilinear.tex
	\draw[->] (-0.1,0) -- (2.2,0) node[right] {$x$};
	\draw[->] (0,-0.1) -- (0,2.2) node[above] {$y$};
	\foreach \x in {1,2} {
		\draw (\x,0) -- (\x,-0.05) node[below] {$\x$};
	}
	\foreach \y in {1,2} {
		\draw (0,\y) -- (-0.05,\y) node[left] {$\y$};
	}
	\foreach \x in {0.5,1.5} {
		\draw (\x,0) -- (\x,-0.025);
	}
	\foreach \y in {0.5,1.5} {
		\draw (0,\y) -- (-0.025,\y);
	}

	% ----- light labels -----
	% the seven equally spaced alpha = k/8 values, just outside the box
	% along their rays (denominators kept as 8 to show equal spacing)
	\node[alphalab, anchor=west]  at (2.25,0.21) {$\tfrac{1}{8}$};
	\node[alphalab, anchor=west]  at (2.25,0.57) {$\tfrac{2}{8}$};
	\node[alphalab, anchor=west]  at (2.25,1.17) {$\tfrac{3}{8}$};
	\node[alphalab, anchor=south west] at (2.21,2.21) {$\alpha=\tfrac{4}{8}$};
	\node[alphalab, anchor=south] at (1.17,2.25) {$\tfrac{5}{8}$};
	\node[alphalab, anchor=south] at (0.57,2.25) {$\tfrac{6}{8}$};
	\node[alphalab, anchor=south] at (0.21,2.25) {$\tfrac{7}{8}$};

	% a couple of rho values along the alpha = 4/8 diagonal ray
	\node[Set1-B!60!black, fill=white, inner sep=1pt] at (0.637,0.637)
		{$\rho=1$};
	\node[Set1-B!60!black, fill=white, inner sep=1pt] at (1.273,1.273)
		{$\rho=2$};

\end{tikzpicture}

%% file: figures/FIGURE-boundary_box-before.tex
\input{_boundary-box-defs.tex}

\begin{tikzpicture}[scale=1.5]
	\begin{scope}
		\clip (-0.4,-1.4) rectangle (3.4,2.4);
		\boundaryboxcontent{a}
		\draw[edge changed] (0.72,0.5) -- (1.28,0.5);
	\end{scope}
	\boundaryboxlabels
\end{tikzpicture}

%% file: figures/FIGURE-boundary_box-after.tex
\input{_boundary-box-defs.tex}

\begin{tikzpicture}[scale=1.5]
	\begin{scope}
		\clip (-0.4,-1.4) rectangle (3.4,2.4);
		\boundaryboxcontent{b}
		\draw[edge changed] (0.5,0.72) -- (0.5,1.28);
	\end{scope}
	\boundaryboxlabels
\end{tikzpicture}

%% file: figures/FIGURE-jdt_competition_LSVK_ultra2-cusps.tex
\renewcommand{\gridcommand}{}

% \endlinechar=-1 suppresses the end-of-line spaces from the ~730
% \colorlet lines (harmless here in vertical mode, but fatal if this
% picture is ever placed in an \hbox, e.g. inside \resizebox); set
% without a group so the locally scoped colours stay visible below.
\endlinechar=-1\relax
\input{autogenerated/_common_color_scheme.tex}%
\endlinechar=13\relax

\centering
\begin{tikzpicture}[scale=0.18,
competition interface/.style={draw=blue},
boundary/.style={thick, black},
diagram boundary/.style={draw=none}, % LSVK limit-shape curve dropped
jdt path/.style={draw=none}, % trajectory overlay suppressed
truncate/.style={opacity=0}, % truncation removed: full forest shown
cusp halo/.style={white, line width=2.8pt, line cap=round},
cusp mark/.style={red, line width=1.2pt, line cap=round},
annotation/.style={rotate=45, font=\tiny}
]

	% Marker: a bold red X on a white casing, so it stays visible on any
	% wedge colour.  \cusphalfarm is the half-length of each cross arm.
	\newcommand{\cusphalfarm}{1.3}
	\newcommand{\cuspX}[2]{%
		\draw[cusp halo] (#1-\cusphalfarm,#2-\cusphalfarm)
			-- (#1+\cusphalfarm,#2+\cusphalfarm)
			(#1-\cusphalfarm,#2+\cusphalfarm)
			-- (#1+\cusphalfarm,#2-\cusphalfarm);
		\draw[cusp mark] (#1-\cusphalfarm,#2-\cusphalfarm)
			-- (#1+\cusphalfarm,#2+\cusphalfarm)
			(#1-\cusphalfarm,#2+\cusphalfarm)
			-- (#1+\cusphalfarm,#2-\cusphalfarm);%
	}

	% Subdued forest: fade the wedges so the cusp markers dominate.
	\begin{scope}[opacity=0.35]
		\subfile{autogenerated/jdt_tree_interface_LSVK_ultra2.tex}
	\end{scope}

	% Cusp markers (bold red X with white casing), on top at full strength.
	\input{autogenerated/cusp-markers-LSVK-ultra2.tex}

	% Draw main axes with arrows
	\draw[very thick, ->] (0,0) -- (55,0) node[anchor=west] {$x$};
	\draw[very thick, ->] (0,0) -- (0,55) node[anchor=south] {$y$};

	% X-axis tick marks and labels
	\foreach \x in {10,20,...,50} {
		\draw[thick] (\x,0) -- (\x,-1) node[anchor=north] {$\x0$};
	}
	\draw[thick] (0,0) -- (0,-1) node[anchor=north] {$0$};

	% Y-axis tick marks and labels
	\foreach \y in {10,20,...,50} {
		\draw[thick] (0,\y) -- (-1,\y) node[anchor=east] {$\y0$};
	}
	\draw[thick] (0,0) -- (-1,0) node[anchor=east] {$0$};

\end{tikzpicture}

%% file: figures/cutoff-sweep.tex
{% group: scope the named styles so the legend swatches can reuse them
\tikzset{
	data/.style={blue!70!black, very thick},
	theory/.style={green!55!black, very thick, densely dashed},
}%
\begin{tikzpicture}[x=9cm, y=5cm]
	% theoretical line K*t^{1/4} and the Monte-Carlo curve (coords in [0,1]^2)
	\input{autogenerated/cutoff-sweep-content.tex}

	% frame
	\draw[thin] (0,0) rectangle (1,1);
	% radial axis  R = t^{1/4} in [0, 31.623]  (x_norm = R/31.623)
	\foreach \R in {0,5,10,15,20,25,30}
		\draw ({\R/31.623},0) -- ({\R/31.623},-0.02) node[below] {$\R$};
	\node[below] at (0.5,-0.12) {$\sqrt[4]{t}$};
	% count axis  cusps per realization in [0,62]  (y_norm = N/62)
	\foreach \N in {0,10,20,30,40,50,60}
		\draw (0,{\N/62}) -- (-0.012,{\N/62}) node[left] {$\N$};
	\node[rotate=90, anchor=south] at (-0.085,0.5) {cusps per realization};

	% legend -- swatches use the SAME styles, so the conjecture line dashes
	\node[anchor=south east, draw, fill=white, font=\small, inner sep=3pt]
		at (0.97,0.04) {
		\begin{tabular}{@{}l@{\ }l@{}}
			\tikz[baseline=-0.5ex]{\draw[data] (0,0) -- (14pt,0);} & Monte Carlo\\
			\tikz[baseline=-0.5ex]{\draw[theory] (0,0) -- (14pt,0);} &
				conjecture $K\,\sqrt[4]{t}$\\
		\end{tabular}};
\end{tikzpicture}%
}%

%% file: figures/cusp-direction-cdf.tex
\begin{tikzpicture}[x=9cm, y=5cm]
	\tikzset{
		resA/.style={red!75!black, very thick},
		resB/.style={orange!85!black, thick},
		resC/.style={green!55!black, thick},
		resD/.style={blue!70!black, thick},
		resE/.style={violet!70!black, thick},
	}
	\draw[thin] (0,0) rectangle (1,1);
	% the empirical CDFs, one per radial shell
	\input{autogenerated/cusp-direction-cdf-content.tex}
	% direction axis z in [0,1]
	\foreach \p in {0,0.2,0.4,0.6,0.8,1}
		\draw (\p,0) -- (\p,-0.02) node[below] {$\p$};
	\node[below] at (0.5,-0.10) {cusp direction $z \in [0,1]$};
	% CDF axis
	\foreach \q in {0,0.2,0.4,0.6,0.8,1}
		\draw (0,\q) -- (-0.012,\q) node[left] {$\q$};
	\node[rotate=90, anchor=south] at (-0.075,0.5)
		{empirical CDF of cusp directions};
	% legend
	\node[anchor=north west, draw, fill=white, font=\small, inner sep=3pt]
		at (1.04,1.0) {
		\begin{tabular}{@{}l@{\ }l@{}}
			\textcolor{red!75!black}{\rule[2pt]{12pt}{1.6pt}} & $\sqrt[4]{t}\in[5,10]$\\
			\textcolor{orange!85!black}{\rule[2pt]{12pt}{1.2pt}} & $\sqrt[4]{t}\in[10,15]$\\
			\textcolor{green!55!black}{\rule[2pt]{12pt}{1.2pt}} & $\sqrt[4]{t}\in[15,20]$\\
			\textcolor{blue!70!black}{\rule[2pt]{12pt}{1.2pt}} & $\sqrt[4]{t}\in[20,25]$\\
			\textcolor{violet!70!black}{\rule[2pt]{12pt}{1.2pt}} & $\sqrt[4]{t}\in[25,30]$\\
		\end{tabular}};
\end{tikzpicture}

%% file: figures/rayleigh-spacing.tex
\begin{tikzpicture}[x=0.17\textwidth, y=5.8cm,
	histbar/.style={fill=blue!18, draw=blue!35, line width=0.2pt},
	raycurve/.style={red, very thick},
	expcurve/.style={gray, thick, densely dashed}]

	% bars (data) then the two theory curves (auto-generated coordinates)
	\input{autogenerated/rayleigh-spacing-content.tex}

	% axes
	\draw[thin] (0,0) -- (4.1,0);
	\draw[thin] (0,0) -- (0,0.88);

	% x ticks (rescaled spacing s)
	\foreach \xx in {0,1,2,3,4}
		\draw (\xx,0) -- (\xx,-0.018) node[below] {\small $\xx$};

	% y ticks (density)
	\foreach \yy in {0,0.2,0.4,0.6,0.8}
		\draw (0,\yy) -- (-0.05,\yy) node[left] {\small $\yy$};

	% axis labels
	\node[below] at (2,-0.085) {rescaled spacing $s$};
	\node[rotate=90, anchor=south] at (-0.42,0.4) {\small density $P(s)$};

	% legend (upper right)
	\draw[raycurve] (2.45,0.80) -- (2.95,0.80);
	\node[right, font=\small] at (3.0,0.80)
		{Rayleigh $\tfrac{\pi}{2}s\,e^{-\pi s^2/4}$};
	\draw[expcurve] (2.45,0.71) -- (2.95,0.71);
	\node[right, font=\small] at (3.0,0.71) {exponential $e^{-s}$};
	\fill[histbar] (2.55,0.605) rectangle (2.85,0.655);
	\node[right, font=\small] at (3.0,0.63) {data};

\end{tikzpicture}

%% file: figures/binary-tree-competition.tex
% Clipping command to show specific horizontal range
\newcommand{\clipcommand}{\clip(-0.4, -0.1) rectangle (25.4,1.15);}

% Load color scheme for tree regions (dedicated: includes text-* colors)
\input{autogenerated/binary-tree-color-scheme.tex}

\begin{tikzpicture}[yscale=8, xscale=0.4,
	node_style/.style={circle, draw=black, fill=white, inner sep=1pt, outer sep=0pt},
	my edge style/.style={black},
	axis label/.style={fill=white, font=\small}
]

	% Y-axis tick marks and labels (z values)
	\foreach \y in {0.2,0.4,...,1.0} {
		\draw[thick] (0,\y) -- (-0.2,\y) node[left] {$\pgfmathprintnumber{\y}$};
	}

	% Draw axes through origin (0,0)
	\draw[very thick, ->] (0,-0.02) -- (0,1.1) node[above, axis label]{$z$};
	\draw[very thick, ->] (-0.1,0) -- (27,0) node[right, axis label]{$\sqrt[4]{\min\tau}$};

	% Apply clipping
	\clipcommand

	% Horizontal dashed line at z=1
	\draw[dashed] (0,1) -- (12,1);

	% X-axis minor tick marks (every unit)
	\foreach \x in {0,...,27} {
		\draw (\x,0) -- (\x,-0.01);
	}

	% X-axis major tick marks and labels (every 5 units)
	\foreach \x in {0,5,...,25} {
		\draw (\x,0) -- (\x,-0.035) node[below] {$\x$};
	}

	% Main content: autogenerated binary tree with competition interfaces
	\tiny
	\input{autogenerated/binary-tree-competition-content.tex}

\end{tikzpicture}